\documentclass[11pt,a4paper]{article}

\usepackage[a4paper,margin=1in]{geometry}
\usepackage{amsmath,amssymb,amsthm,mathtools}
\usepackage{enumitem}
\usepackage{aliascnt}
\usepackage{microtype}
\usepackage{xcolor}
\usepackage[T1]{fontenc}
\usepackage{lmodern}

\usepackage[nameinlink,noabbrev]{cleveref}

\numberwithin{equation}{section}

\newtheoremstyle{hzplain}%
  {6pt}{6pt}{\itshape}{}{\bfseries}{}{0.5em}{}
\theoremstyle{hzplain}
\newtheorem{theorem}{Theorem}[section]

\newaliascnt{proposition}{theorem}
\newtheorem{proposition}[proposition]{Proposition}
\aliascntresetthe{proposition}

\newaliascnt{definition}{theorem}
\newtheorem{definition}[definition]{Definition}
\aliascntresetthe{definition}

\newaliascnt{lemma}{theorem}
\newtheorem{lemma}[lemma]{Lemma}
\aliascntresetthe{lemma}

\newaliascnt{corollary}{theorem}
\newtheorem{corollary}[corollary]{Corollary}
\aliascntresetthe{corollary}

\newtheoremstyle{hzremark}%
  {6pt}{6pt}{\normalfont}{}{\bfseries}{}{0.5em}{}
\theoremstyle{hzremark}
\newaliascnt{remark}{theorem}
\newtheorem{remark}[remark]{Remark}
\aliascntresetthe{remark}
\newtheorem*{remark*}{Remark}

\crefname{theorem}{theorem}{theorems}
\Crefname{theorem}{Theorem}{Theorems}
\crefname{proposition}{proposition}{propositions}
\Crefname{proposition}{Proposition}{Propositions}
\crefname{lemma}{lemma}{lemmas}
\Crefname{lemma}{Lemma}{Lemmas}
\crefname{corollary}{corollary}{corollaries}
\Crefname{corollary}{Corollary}{Corollaries}
\crefname{remark}{remark}{remarks}
\Crefname{remark}{Remark}{Remarks}

\newcommand{\R}{\mathbb R}
\newcommand{\T}{\mathbb T}
\newcommand{\D}{\mathcal D}
\newcommand{\eps}{\varepsilon}

\definecolor{revisiongreen}{RGB}{0,88,52}

\title{\bfseries  Bistable traveling fronts in strong shear flows: speed and profile asymptotics }
\author{
Weiwei Ding\thanks{School of Mathematical Sciences, South China Normal University, Guangzhou 510631, China (\texttt{dingweiwei@m.scnu.edu.cn}). W. Ding is partially supported by NSFC (12471197) and Guangdong
	Basic and Applied Basic Research Foundation (2023B1515020034).}
\and
Mingmin Zhang\thanks{School of Mathematical Sciences, University of
Science and Technology of China, Hefei, Anhui 230026, China
(\texttt{mingmin.zhang.math@gmail.com}). M. Zhang is supported by the
French ANR ReaCh (ANR-23-CE40-0023) project.}
\and
Zhaoyun Zhang\thanks{School of Mathematics and Big Data, Anhui University
of Science and Technology, Huainan, Anhui 232001, China
(\texttt{zhyzhang@aust.edu.cn}).}
}

\date{}

\begin{document}

\maketitle

\begin{abstract}

We study the strong-shear limit of bistable traveling fronts in infinite cylinders with periodic transverse boundary conditions. We prove that, for every sufficiently regular periodic shear profile, the front speed normalized by the flow amplitude converges as the amplitude tends to infinity. After suitable longitudinal rescaling and translations, the corresponding profiles converge along subsequences to full fronts of the limiting degenerate equation. Furthermore, under a H\"ormander-type non-degeneracy condition on the shear, the limiting front is proved to be regular and unique up to translation, and the whole normalized family converges uniformly. A main difficulty is that the sign-changing bistable reaction allows the limiting transition to split through intermediate transverse equilibria. We rule out this possibility by exploiting the instability of such equilibria together with suitable regularization and comparison arguments.
Finally, in contrast with the combustion case, where every nonconstant mean-zero shear yields a positive limiting speed, we construct an example showing that, for a fixed bistable reaction, smooth mean-zero shears can produce negative, zero, or positive limiting speeds. In particular, the example shows that a shear which accelerates propagation at small amplitudes may reverse the propagation direction when its amplitude becomes large.

\vskip 3mm
\noindent\textit{Keywords.} Bistable reaction; traveling front; strong shear
flow; degenerate elliptic equation; transverse equilibrium.

\smallskip
\noindent\textit{Mathematics Subject Classification (2020).}
35C07, 35K57, 35B25, 35H20.
\end{abstract}

\section{Introduction and main results}

Traveling fronts are fundamental objects in the study of propagation
phenomena for reaction-diffusion equations. For bistable reactions,
they describe the invasion of one stable state by another, and their
existence, stability and qualitative properties have been extensively
studied in cylinders and spatially periodic media; see
\cite{Berestycki2003,Xin2000} for general accounts and
\cite{BerestyckiHamel2000,DingGiletti2021,DingHamelLiang2025,DingHamelZhao2017,Ducrot2016,DucrotGilettiMatano2014,Roquejoffre1997,Xin1991,Xin1993,XinZhu1995} and references therein
for related results. In the presence of a flow, advection interacts with
diffusion and reaction and may affect both the speed and the shape of
a front. This leads naturally to the question of how propagation
behaves as the strength of the flow increases.

In this paper, we study the effect of strong periodic shear flows on
bistable traveling fronts. More precisely, we consider
\begin{equation}
	v_t+A\alpha(y)v_x=\Delta v+f(v),
	\qquad t\in\R,\quad (x,y)\in\R\times\R^{N-1},
	\label{model}
\end{equation}
where $N\geq2$, $A\geq0$ is the flow amplitude, and $\alpha$ is a
$(1,\ldots,1)$-periodic function of class $C^{1,\delta}$, for some
$\delta\in(0,1)$. Thus, the advection field is the incompressible shear
flow $Aq$, with
\begin{equation*}
	q(x,y)=(\alpha(y),0,\ldots,0).
\end{equation*}
We identify periodic functions of $y$ with functions on
$\T^{N-1}=\R^{N-1}/\mathbb Z^{N-1}$, and denote the mean of the shear by
\begin{equation*}
	\bar\alpha:=\int_{\T^{N-1}}\alpha(y)\,dy.
\end{equation*}
Throughout the paper, the reaction $f\in C^{1,\delta}([0,1])$ is assumed
to be of bistable type: there exists $\theta\in(0,1)$ such that
\begin{equation}
	\begin{cases}
		f(0)=f(\theta)=f(1)=0,~~f<0~\hbox{in }(0,\theta),~~
		f>0~\hbox{in }(\theta,1),\vspace{5pt}\\
		f'(0)<0,~~f'(\theta)>0,~~f'(1)<0,~~
		\displaystyle\int_0^1f(s)\,ds>0.
	\end{cases}
	\label{eq:bistable}
\end{equation}

Equation \eqref{model} can be interpreted as a population model with
a strong Allee effect in a prescribed shear flow; see, for instance,
\cite{rghk,HeinzePapanicolaouStevens2001}. Here, $v$ denotes the
population density normalized by the carrying capacity, while diffusion
and advection describe random dispersal and transport by the flow,
respectively. The stable equilibria $0$ and $1$ represent extinction and
the carrying capacity, and the unstable equilibrium $\theta$ is the
Allee threshold separating population decline from growth in the absence
of spatial transport.

We are interested in traveling fronts of the form
$v(t,x,y)=u_A(x-c_At,y)$,
where the profile $u_A$ is periodic in $y$ and connects $1$ to $0$.
Berestycki and Nirenberg \cite{BerestyckiNirenberg1992}
studied the existence and uniqueness of traveling fronts
in infinite cylinders with shear flows and homogeneous
Neumann boundary conditions. Their results
(Theorems~1.7-1.8), adapted to periodic boundary conditions
in $y$, yield, for each fixed $A\geq0$, a unique speed
$c_A=c_A(\alpha,f)$ and a unique (up to translations in $x$)
profile $u_A:\R\times\T^{N-1}\to(0,1)$ satisfying
\begin{equation}
	\left\{
	\begin{aligned}
		&\Delta u_A+
		\bigl(c_A-A\alpha(y)\bigr)u_{A,x}+f(u_A)=0
		&&\hbox{in }\R\times\T^{N-1},\\
		&0\equiv u_A(+\infty,\cdot)<u_A
		<u_A(-\infty,\cdot)\equiv1
		&&\hbox{in }\R\times\T^{N-1},
	\end{aligned}
	\right.
	\label{eqn_TW_A}
\end{equation}
where the two limits $0$ and $1$ are uniform in $y\in\T^{N-1}$.
 Moreover, $u_{A,x}<0$ on $\R\times\T^{N-1}$. 

In the absence of advection ($A=0$), the front is planar and reduces to
the classical one-dimensional bistable wave studied by Aronson and
	Weinberger \cite{AW78} and by Fife
and McLeod~\cite{FM77}. More precisely, $u_0(x,y)=q(x)$,
where the unique speed $c_0$ and the profile $q$, unique up to
translations, satisfy
\begin{equation*}
	q''+c_0q'+f(q)=0\quad\hbox{in }\R,
	\qquad q(-\infty)=1,\quad q(+\infty)=0,
\end{equation*}
with $q'<0$ in $\R$. Moreover, the positive-integral assumption in \eqref{eq:bistable} ensures
that $c_0>0$, so that the state $1$ invades the state $0$ in the positive
$x$-direction. The case of a negative integral can be reduced to this
one by reversing the longitudinal variable and interchanging the two
stable states.

In the presence of a shear flow, a natural question is how the speed
and profile of the front depend on the amplitude $A$. The influence of
flows on propagation has been investigated through numerical studies
of autocatalytic fronts \cite{AllenBrindleyMerkinPilling1996}, asymptotic
analyses of rapidly varying flows \cite{AudolyBerestyckiPomeau2000},
and estimates for strong-advection enhancement
\cite{KiselevRyzhik2001,Zlatos2011}. Variational characterizations of
multidimensional propagation speeds provide another approach to this
question \cite{Hamel1999,HeinzePapanicolaouStevens2001}. 
For bistable reactions in layered media and
shear flows, Papanicolaou and Xin \cite{PapanicolaouXin1991} derived
perturbative speed formulas, including a two-dimensional bistable shear
model. Related small-amplitude asymptotics, large-amplitude estimates,
numerical results and probabilistic bounds were obtained for space-time
periodic and stationary Gaussian shears in
\cite{NolenXin2003,NolenXin2004}.

For steady shear flows, Heinze, Papanicolaou and Stevens
\cite{HeinzePapanicolaouStevens2001} derived min-max characterizations
of propagation speeds under appropriate existence and stability
assumptions and applied them to bistable and combustion fronts in
cylinders. Their analysis provides quantitative speed bounds and a
rigorous second-order expansion for small shear amplitudes.
More precisely, for a smooth nonconstant mean-zero shear $\alpha$ and a smooth
bistable reaction satisfying \eqref{eq:bistable}, the corresponding
expansion in the present periodic setting takes the form
\begin{equation*}
	c_A(\alpha,f)=c_0+\kappa(\alpha,f)A^2+o(A^2)
	\qquad\text{as }\,\, A\to0,
\end{equation*}
where $\kappa(\alpha,f)>0$. Thus, every such shear increases the
propagation speed when its amplitude is sufficiently small. This
perturbative result, however, does not determine the behavior of the
front in the strong-shear regime $A\to+\infty$, or whether the
small-amplitude speed-up persists as the flow strength increases.

A first observation in the strong-flow regime is that the speed grows
at most linearly with $A$. Indeed, for every $k\in\R$, replacing $\alpha$ by $\alpha+k$ and $c_A$
by $c_A+Ak$ leaves the profile equation unchanged, and hence, by uniqueness, 
\begin{equation}
	c_A(\alpha+k,f)=c_A(\alpha,f)+Ak
	\qquad\text{for all }\,\,k\in\R.
	\label{eq:constant-shear-shift}
\end{equation}
Furthermore, as shown in Lemma~\ref{lem:order-speed} below, if
$\alpha\leq\beta$ in $\T^{N-1}$, then
$c_A(\alpha,f)\leq c_A(\beta,f)$ for every $A\geq0$.
Combining this ordering with \eqref{eq:constant-shear-shift}
gives
\begin{equation}
	\big|c_A(\alpha,f)-c_A(\beta,f)\big|
	\leq A\|\alpha-\beta\|_{L^\infty(\T^{N-1})}.
	\label{Lip_speed}
\end{equation}
In particular, comparison with the constant shears
$\min_{\T^{N-1}}\alpha$ and $\max_{\T^{N-1}}\alpha$ yields
\begin{equation}
	A\min_{\T^{N-1}}\alpha+c_0
	\leq c_A(\alpha,f)
	\leq A\max_{\T^{N-1}}\alpha+c_0.
	\label{speed-bounds}
\end{equation}
These bounds ensure that $c_A/A$ remains bounded as $A\to+\infty$,
but they do not imply its convergence.

To study this normalized speed together with the corresponding
profiles, we introduce the longitudinal rescaling
\begin{equation*}
	U_A(x,y):=u_A(Ax,y).
\end{equation*}
Then,
\begin{equation*}
	A^{-2}U_{A,xx}+\Delta_yU_A
	+\left(\frac{c_A}{A}-\alpha(y)\right)U_{A,x}+f(U_A)=0
	\quad\hbox{in }\R\times\T^{N-1}.
\end{equation*}
As $A\to+\infty$, the longitudinal diffusion coefficient tends to
zero. Thus, any limiting equation associated with a subsequential
limit of $c_A/A$ is degenerate in the propagation direction. The
strong-shear problem therefore concerns not only the asymptotic speed,
but also the persistence of the transition from $1$ to $0$ under this
loss of ellipticity.

For nonnegative combustion reactions, Hamel and Zlato\v{s}
\cite{HamelZlatos2013} proved convergence of the normalized speeds and,
for general $C^{1,\delta}$ periodic shears, subsequential
almost-everywhere convergence of suitably translated rescaled profiles
to full distributional fronts of the limiting equation. Under an
additional H\"ormander-type condition on the shear (see \eqref{cdn_alpha} below), they also obtained
regularity and uniqueness up to translation of the limiting
speed-profile pair. Moreover, for a nonconstant shear, the limiting
speed is strictly greater than the mean flow, giving an asymptotically
linear speed-up relative to that mean.

In contrast, the bistable reaction changes sign, so its spatial
integral allows cancellation and no longer directly controls the
transition region. Local compactness of the rescaled profiles is
therefore insufficient to recover a front connecting the two stable
states. In particular, a limiting transition could a priori split into
two fronts through an intermediate transverse equilibrium. The sign
change of the reaction also raises a question about the asymptotic
speed: the speed-up produced by a weak mean-zero shear need not persist
when the same shear becomes strong. Our purpose is to determine the
strong-shear limit of bistable fronts and to clarify these differences
from the combustion case.

We first introduce the notion of a full front for the limiting equation.
\begin{definition}
	A full degenerate front $U$ with speed $\gamma\in\R$ is a bounded
	distributional solution of
	\begin{equation}
		\left\{
		\begin{aligned}
			&\Delta_yU+(\gamma-\alpha(y))U_x+f(U)=0
			&&\text{in }\D'(\R\times\T^{N-1}),\\
			&0\leq U\leq1
			&&\text{a.e. in }\R\times\T^{N-1},\\
			&U(-\infty,\cdot)=1,\qquad U(+\infty,\cdot)=0
			&&\hbox{uniformly in }\T^{N-1}.
		\end{aligned}
		\right.
		\label{eq:main-degenerate-front}
	\end{equation}
\end{definition}

To fix longitudinal translations, for a bounded function $V$ on
$\R\times\T^{N-1}$, we set
\begin{equation*}
	\mathcal N(V):=\int_0^1\int_{\T^{N-1}}V(x,y)\,dy\,dx.
\end{equation*}
For a multi-index
$\zeta=(\zeta_1,\ldots,\zeta_{N-1})\in\mathbb N_0^{N-1}$, where
$\mathbb N_0:=\{0,1,2,\ldots\}$, we write
$|\zeta|:=\zeta_1+\cdots+\zeta_{N-1}$ and
$D^\zeta:=\partial_{y_1}^{\zeta_1}\cdots
\partial_{y_{N-1}}^{\zeta_{N-1}}$.
Throughout the paper, $|E|$ denotes the Lebesgue measure of a measurable
set $E\subset\T^{N-1}$. Our first main result is the following theorem.

\begin{theorem}\label{thm:main}
	Assume that $\alpha\in C^{1,\delta}(\T^{N-1})$. Then, the following
	statements hold.
	
	\smallskip\noindent
	\textup{(i)} There exists
	$\gamma^*(\alpha,f)\in[\min_{\T^{N-1}}\alpha,
	\max_{\T^{N-1}}\alpha]$ such that
	\begin{equation}
		\lim_{A\to+\infty}\frac{c_A}{A}=\gamma^*(\alpha,f).
		\label{eq:speed-limit}
	\end{equation}
	The lower bound is strict if
	$|\{y\in\T^{N-1}:\alpha(y)=\min_{\T^{N-1}}\alpha\}|=0$,
	and the upper bound is strict if
	$|\{y\in\T^{N-1}:\alpha(y)=\max_{\T^{N-1}}\alpha\}|=0$.
	
	Furthermore, setting $V_A(x,y)=U_A(x+\tau_A,y)$, where $\tau_A$ is the unique real
	number satisfying $\mathcal N(V_A)=1/2$, we have
	\begin{equation}
		\lim_{A\to+\infty}A^{-2}
		\int_{\R\times\T^{N-1}}|V_{A,x}|^2\,dx\,dy=0.
		\label{eq:main-longitudinal-energy}
	\end{equation}
	Every sequence $A_n\to+\infty$ has a subsequence $($still denoted
	by $A_n$$)$ along which $V_{A_n}$ converges a.e. in
	$\R\times\T^{N-1}$ to a full degenerate front $U$ with speed
	$\gamma=\gamma^*(\alpha,f)$, satisfying
	\begin{equation*}
		U_x\leq0\quad\hbox{in }\D'(\R\times\T^{N-1}),
		\qquad \nabla_yU\in L^2(\R\times\T^{N-1}),
		\qquad \mathcal N(U)=1/2,
	\end{equation*}
	and for every $1\leq p<+\infty$,
	\begin{equation}
		\|V_{A_n}-U\|_{L^p(\R\times\T^{N-1})}
		+\|\nabla_yV_{A_n}-\nabla_yU\|_{L^2(\R\times\T^{N-1})}
		\to0.
		\label{eq:general-profile-convergence}
	\end{equation}
	Moreover, the function $f(U)$ belongs to $L^1(\R\times\T^{N-1})$, and
	\begin{equation}
		\begin{aligned}
			\gamma^*(\alpha,f)-\bar\alpha
			&=\int_{\R\times\T^{N-1}}f(U)\,dx\,dy,\\
			\int_{\R\times\T^{N-1}}|\nabla_yU|^2\,dx\,dy
			&=\int_{\R\times\T^{N-1}}f(U)(U-\tfrac12)\,dx\,dy.
		\end{aligned}
		\label{eq:general-identities}
	\end{equation}
	
	\smallskip\noindent
	\textup{(ii)} Suppose in addition that, for some positive integer $r$,
	\begin{equation}
		\alpha\in C^\infty(\T^{N-1}),\qquad
		\sum_{1\leq|\zeta|\leq r}|D^\zeta\alpha(y)|>0
		\quad\text{for every }y\in\T^{N-1}.
		\label{cdn_alpha}
	\end{equation}
	Then, the full degenerate front $(\gamma^*(\alpha,f),U)$ is unique up to
	translation of $U$ in $x$, satisfying
	\begin{equation*}
		\min_{\T^{N-1}}\alpha<\gamma^*(\alpha,f)<\max_{\T^{N-1}}\alpha.
	\end{equation*}
	Furthermore,
	$U\in C^{1,\beta}_{\mathrm{loc}}(\R\times\T^{N-1})$ for some
	$\beta\in(0,\delta)$, with $0<U<1$, $U_x<0$, and
	$\nabla_yU\in L^2(\R\times\T^{N-1})\cap
	L^\infty(\R\times\T^{N-1})$.
	As $A\to+\infty$, the whole normalized family $V_A$ converges
	uniformly on $\R\times\T^{N-1}$ to the unique front satisfying
	$\mathcal N(U)=1/2$, and, for every $1\leq p\leq+\infty$,
	\begin{equation}
		\|V_A-U\|_{L^p(\R\times\T^{N-1})}
		+\|\nabla_yV_A-\nabla_yU\|_{L^2(\R\times\T^{N-1})}
		\to0.
		\label{eq:regular-profile-convergence}
	\end{equation}
\end{theorem}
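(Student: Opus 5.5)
The argument follows the strategy of Hamel and Zlato\v{s} \cite{HamelZlatos2013}, with an extra step to prevent the limiting transition from splitting. Set $\gamma_A:=c_A/A$, which is bounded by \eqref{speed-bounds}.

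\textbf{Uniform integral bounds.} Multiplying the $V_A$-equation by $1$ and integrating over $\R\times\T^{N-1}$ gives
\[
\gamma_A-\bar\alpha=\int_{\R\times\T^{N-1}} f(V_A).
\]
Multiplying by $V_A-\tfrac12$ and integrating gives
\[
A^{-2}\int |V_{A,x}|^2+\int|\nabla_yV_A|^2=\int f(V_A)\bigl(V_A-\tfrac12\bigr).
\]
Neither right-hand side is a priori bounded, because $f$ changes sign. I would first bound the length of the transition region uniformly in $A$: the measure of $\{\eta\le V_A\le 1-\eta\}$ should be $O(1)$. For this I would compare $V_A$ on long $x$-intervals with the transverse problem $\Delta_y w+f(w)=0$ on $\T^{N-1}$, whose constant solutions $0,\theta,1$ have the linearized operators $\Delta_y+f'(\cdot)$. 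Where $V_A$ stays in a band around $\theta$, the instability $f'(\theta)>0$ together with the monotonicity $V_{A,x}<0$ should force it to leave that band within an $x$-length $O(1)$, through a sub-/supersolution built from the principal eigenfunction. Near $0$ and $1$, exponential decay controls the tails. These estimates bound $\int|f(V_A)|$ and $\int|\nabla_y V_A|^2$. They also give \eqref{eq:main-longitudinal-energy}: testing with $V_{A,x}$ shows $A^{-2}\int|V_{A,x}|^2$ is controlled by $A^{-2}$ times bounded quantities, or else one uses the identity $\gamma_A\int V_{A,x}^2$-type relations after subtracting.

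\textbf{Compactness and passage to the limit.} Since $V_A$ is monotone in $x$ and $\nabla_yV_A$ is bounded in $L^2$, Helly-type arguments in $x$ combined with Rellich in $y$ give a.e. convergence along subsequences, with $\gamma_{A_n}\to\gamma$. The distributional equation \eqref{eq:main-degenerate-front} holds in the limit, $U_x\le0$, and $\mathcal N(U)=\tfrac12$. The main obstacle is to show $U(\pm\infty)=0,1$. A priori the limits $U(\pm\infty,\cdot)$ are periodic $H^1$ solutions of $\Delta_y w+f(w)=0$ with $0\le w\le1$, and such a solution could be an intermediate transverse equilibrium, splitting the front in two. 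I would exclude this as follows.
\begin{itemize}
\item Any such intermediate $w$ is linearly unstable (the constant $\theta$ because $f'(\theta)>0$; nonconstant ones because $\partial_{y_i}w$ changes sign, so the principal eigenvalue is positive).
\item Combined with the uniform transition-length bound, this instability makes a plateau near $w$ over a long $x$-interval impossible.
\item To make this rigorous, I would regularize by adding $\eps U_{xx}$, or by mollifying in $x$, and use the sliding/comparison method.
\end{itemize}
Once the limits are $1$ and $0$, tightness gives $L^p$ convergence. Strong $L^2$ convergence of $\nabla_y V_A$ follows from convergence of the energy identity, and passing to the limit in the identities above yields \eqref{eq:general-identities}.

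\textbf{Uniqueness of the limiting speed, strictness, and part (ii).} For uniqueness of $\gamma$, I would run a comparison argument between two degenerate fronts with speeds $\gamma_1<\gamma_2$: translate one far ahead, slide it back, and use the strong maximum principle in the degenerate sense, or compare directly with the $A$-fronts in the spirit of Lemma~\ref{lem:order-speed}. This forces $\gamma_1=\gamma_2$, so the whole family $\gamma_A$ converges and gives \eqref{eq:speed-limit}. For the strict bounds, if $\gamma=\min\alpha$ then $(\gamma-\alpha)U_x\ge0$, so integrating gives $\Delta_y\bar U+f$-type averages yielding a contradiction unless $\alpha\equiv\min\alpha$ on the transition set, which has positive measure; the upper bound is symmetric.

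Under \eqref{cdn_alpha}, the operator $\Delta_y+(\gamma-\alpha)\partial_x$ is hypoelliptic by H\"ormander's theorem, since the commutators $[\partial_{y_i},(\gamma-\alpha)\partial_x]$ span $\partial_x$. Subelliptic estimates then give $C^{1,\beta}_{\rm loc}$ regularity and a strong maximum principle, so $U_x<0$ and $0<U<1$. A sliding argument then yields uniqueness of $U$ up to translation, and uniqueness forces convergence of the whole family. Uniform convergence follows from monotonicity and continuity of the limit (Dini/P\'olya type), which gives \eqref{eq:regular-profile-convergence}.
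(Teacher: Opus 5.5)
\paragraph{Main gap: excluding the terrace.} The genuine gap is the step you yourself flag as the main obstacle: ruling out that the limit splits as $1\to w\to 0$ through an intermediate equilibrium $w\in\mathcal E_f\setminus\{0,1\}$.

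Your plan rests on an $A$-uniform bound on the length of $\{\eta\le V_A\le 1-\eta\}$, derived from instability of $w$, monotonicity and an eigenfunction barrier. You then use that same bound to forbid plateaus near $w$. This is circular: a uniform transition length is essentially the conclusion, since it is exactly what excludes a terrace. Moreover, the ingredients you list cannot produce it. They are all present for a constant shear $\alpha\equiv\alpha_0$, and there, at $\gamma=\alpha_0$, the degenerate equation has the nonincreasing solutions $Q_L$ with plateaus of arbitrary length at the unstable state $\theta$. Your ``band around $\theta$'' also does not cover nonconstant $w$, which cross $\theta$.

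The missing idea is the following. The two pieces of a would-be terrace come from translates of the same sequence, so they share the same speed $\gamma$. Approaching an unstable $w$ imposes a sign on the weighted drift $B=\int_{\T^{N-1}}(\gamma-\alpha)\varphi^2\,dy$, where $\varphi>0$ is the principal eigenfunction of $-\Delta_y-f'(w)$.
\begin{itemize}
\item By Lemma~\ref{lem:tailsign}, the nonnegative, nonincreasing, possibly discontinuous solution $U^1-w$ of the linearized equation, which decays as $x\to+\infty$, forces $B>0$ (when $\gamma<\max\alpha$).
\item The reflected difference $w-U^2$ forces $B<0$ (when $\gamma>\min\alpha$).
\end{itemize}
The sign lemma is proved via an exponential convolution in $x$ to obtain a positive regularization $z$, a transverse Harnack inequality, and the logarithmic identity for $T(x)=\int(\gamma-\alpha)\varphi^2\log\bigl(z/(M\varphi)\bigr)\,dy$. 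The two sign conditions are incompatible whenever $\gamma-\alpha\not\equiv 0$. Constant shears must be treated separately through the explicit fronts $q(Ax)$.

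For this reason the paper argues in the reverse order to yours:
\begin{enumerate}
\item local compactness, using only $\|U_{A,x}\|_{L^1}=1$ and local energy bounds;
\item classification of translated limits into a full front or a two-front terrace, via the order structure in Proposition~\ref{prop_Ef};
\item exclusion of the terrace;
\item only afterwards, uniform exponential tails and global bounds (Lemma~\ref{lem5.1}).
\end{enumerate}

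\paragraph{Further steps that fail as written.}
\begin{itemize}
\item \emph{Speed convergence for general $\alpha$.} For a general $\alpha\in C^{1,\delta}$ you cannot compare two degenerate fronts by sliding and a strong maximum principle. Without \eqref{cdn_alpha} the fronts may be discontinuous: wherever $\alpha\equiv\gamma$ on an open set, the equation reduces to $\Delta_yU+f(U)=0$. No uniqueness is available there. The paper proves convergence of $c_A/A$ only for shears satisfying \eqref{cdn_alpha}, using Proposition~\ref{prop:full-uniqueness}. It reaches arbitrary $\alpha$ by approximating uniformly with nonconstant trigonometric polynomials, which satisfy \eqref{cdn_alpha}, and invoking \eqref{Lip_speed}.
\item \emph{Strict speed bounds.} When $\gamma=\min\alpha$, the flux identity together with $f\ge\kappa(1-U)$ near $1$ only shows that $U\equiv 1$ on a left half-cylinder. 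You still need a unique-continuation result for $(\alpha-\gamma)v_x-\Delta_yv=-f(1-v)$, whose weight may vanish on a null set. This is where $|\{\alpha=\min\alpha\}|=0$ actually enters, through a weighted interpolation inequality and Gronwall's lemma for $\int(\alpha-\gamma)v^2\,dy$.
\item \emph{Vanishing of the longitudinal energy.} Testing with $V_{A,x}$ gives $\int(\gamma_A-\alpha)|V_{A,x}|^2=\int_0^1 f$, not smallness of $A^{-2}\int|V_{A,x}|^2$. The smallness comes, as you indicate later, from comparing the energy identity with the limit identity $\int|\nabla_yU|^2=\int f(U)(U-\tfrac12)$.
\end{itemize}
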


\begin{remark}
	Condition \eqref{cdn_alpha} is the same non-degeneracy assumption
	used in \cite{HamelZlatos2013}. It ensures that the vector fields
	associated with
	\begin{equation*}
		L_\gamma:=\Delta_y+(\gamma-\alpha(y))\partial_x
	\end{equation*}
	and their commutators up to order $r+1$ span all spatial directions,
	sand hence, $L_\gamma$ satisfies H\"ormander's hypoellipticity condition
	\cite{Hormander1967}.
	
	Without this assumption, the limiting fronts may be discontinuous and
	uniqueness up to translation may fail. For instance, for a constant shear
	$\alpha\equiv\alpha_0$, the normalized rescaled profiles satisfies 
	$\|V_A-\mathbf1_{\{x<1/2\}} \|_{L^p(\R\times\T^{N-1})}\to 0$ as $A\to+\infty$ for every $1\leq p<\infty$. 
	At the same speed $\gamma=\alpha_0$, the limiting equation
	also admits profiles with arbitrary plateaus at $\theta$, which
	are not all translates of one another. Thus, convergence of the
	rescaled profiles does not require uniqueness in the larger class
	of distributional full fronts. This constant-shear case is discussed
	in Section~\ref{subsec:constant}.
\end{remark}

\begin{remark}
	The limiting speed inherits the comparison and scaling properties
	of the finite-amplitude problem. More precisely, for
	$\alpha,\beta\in C^{1,\delta}(\T^{N-1})$ and $k\in\R$, we have
	\begin{equation}
		\begin{aligned}
			\left|\gamma^*(\alpha,f)-\gamma^*(\beta,f)\right|
			&\leq\|\alpha-\beta\|_{L^\infty(\T^{N-1})},\\
			\gamma^*(\alpha+k,f)&=\gamma^*(\alpha,f)+k.
		\end{aligned}
		\label{eq:speed-Lipschitz}
	\end{equation}
	More generally, for every $t>0$, there holds
	\begin{equation*}
		\gamma^*(t\alpha+k,f)=t\gamma^*(\alpha,f)+k.
	\end{equation*}
	Indeed, this follows by dividing
	$c_A(t\alpha+k,f)=c_{tA}(\alpha,f)+Ak$ by $A$ and passing to the limit.
\end{remark}

We next examine the sign of the limiting speed and its
consequences for propagation. By \eqref{eq:speed-limit},
\begin{equation*}
	c_A(\alpha,f)-A\bar\alpha
	=A\bigl(\gamma^*(\alpha,f)-\bar\alpha\bigr)+o(A)
	\qquad\text{as }\,\,A\to+\infty.
\end{equation*}
Moreover, \eqref{eq:speed-Lipschitz} gives
$\gamma^*(\alpha,f)-\bar\alpha
=\gamma^*(\alpha-\bar\alpha,f)$.
Thus, to study the limiting speed relative to the mean flow,
it suffices to consider mean-zero shears.

For nonconstant mean-zero shears, Hamel and Zlato\v{s}
\cite{HamelZlatos2013} proved that
$\gamma^*(\alpha,f)>0$ for combustion reactions.
For bistable reactions, it is natural to ask whether the
positive-integral assumption in \eqref{eq:bistable},
which guarantees $c_0>0$, also implies
$\gamma^*(\alpha,f)>0$.
For mean-zero shears, the first identity in
\eqref{eq:general-identities} reads
$\gamma^*(\alpha,f)
	=\int_{\R\times\T^{N-1}}f(U)\,dx\,dy$.
Unlike $\int_0^1f(u)\,du$, which depends only on $f$,
this spatial integral also depends on the limiting
profile $U$. 
The shear remains present in the limiting equation and
can influence the spatial distribution of $U$, thereby
affecting the relative magnitudes of the positive reaction
contribution from $\{\theta<U<1\}$ and the negative
contribution from $\{0<U<\theta\}$.
Thus, $\int_0^1f(u)\,du>0$
 no longer determines the sign of this spatial integral.

The following example shows that, for one fixed bistable
reaction satisfying \eqref{eq:bistable}, the limiting
speed can be negative, zero, or positive as the mean-zero
shear profile varies.
Let $N=2$, and, for $0<\eps<1$ and $s\in[-1,1]$, set
$\theta_\eps=(1-\eps)/2$ and
\begin{equation}
	f_\eps(u)=2\eps u(1-u)(u-\theta_\eps),
	\qquad
	\alpha_s(y)=\cos(2\pi y)+s\cos(4\pi y).
	\label{eq:example-data}
\end{equation}
It is straightforward to check that $f_\eps$ satisfies
\eqref{eq:bistable}, with
$\int_0^1f_\eps(u)\,du=\eps^2/6>0$, and that each
$\alpha_s$ is smooth, nonconstant, has zero mean, and
satisfies \eqref{cdn_alpha}. Moreover, for each
fixed $\eps$, varying $s$  changes only the shear profile,
while leaving the reaction unchanged.

\begin{proposition}\label{prop:opposite-signs}
	For the family defined in \eqref{eq:example-data}, there exist
	$C>0$ and $\eps_0\in(0,1)$ such that
	\begin{equation}
		\left|
		\gamma^*(\alpha_s,f_\eps)
		+\frac{3s}{20\pi^2(4+s^2)}\eps
		\right|
		\leq C\eps^{3/2}
		\label{eq:example-speed-expansion}
	\end{equation}
	for all $s\in[-1,1]$ and $0<\eps<\eps_0$. Furthermore, for each fixed $\eps\in(0,\eps_0)$, the map
	$s\mapsto\gamma^*(\alpha_s,f_\eps)$ is continuous, and there
	exists $s_\eps\in(-1,1)$ such that
	\begin{equation*}
		\gamma^*(\alpha_1,f_\eps)
		<\gamma^*(\alpha_{s_\eps},f_\eps)=0
		<\gamma^*(\alpha_{-1},f_\eps).
	\end{equation*}
\end{proposition}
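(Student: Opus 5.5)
The plan is a multiscale (homogenization-type) expansion of the limiting degenerate front for small $\eps$, made rigorous by sub- and supersolutions. The formal computation will be used only to identify the coefficient; a comparison argument will then pin down $\gamma^*$ up to $O(\eps^{3/2})$.

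\textbf{Step 1: formal expansion.} Write $f_\eps(u)=\eps g(u)+\eps^2u(1-u)$ with $g(u)=2u(1-u)(u-\tfrac12)$. The function $g$ is odd about $u=\tfrac12$, so $\int_0^1g=0$. By Theorem~\ref{thm:main}(ii), since $\alpha_s$ satisfies \eqref{cdn_alpha}, $\gamma^*=\gamma^*(\alpha_s,f_\eps)$ is the unique speed of a full degenerate front $U$. Rescale $X=\sqrt\eps\,x$. The front equation becomes
\[
\Delta_yU+\sqrt\eps\,(\gamma-\alpha)U_X+\eps g(U)+\eps^2U(1-U)=0 .
\]
Seek $U=W(X)+\sqrt\eps\,U_1+\eps U_2+\eps^{3/2}U_3+\cdots$ and $\gamma=\gamma_0+\sqrt\eps\gamma_1+\eps\gamma_2+\cdots$.
\begin{itemize}[leftmargin=*]
\item At order $\sqrt\eps$, averaging over $\T$ forces $\gamma_0=0$, and $U_1=\chi(y)W'(X)$ with $\chi''=\alpha_s$, $\int\chi=0$.
\item At order $\eps$, averaging gives the bistable ODE $DW''+\gamma_1W'+g(W)=0$ with $D=-\int\alpha_s\chi=\int|\chi'|^2>0$. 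Since $g$ is balanced, $\gamma_1=0$, and $W$ is the explicit $\tanh$-type standing wave. The correction $U_2$ is then $\chi_2(y)W''$ plus a term in $g(W)$, with $\chi_2$ solving a cell problem $\chi_2''=\alpha_s\chi-\int\alpha_s\chi$.
\item At order $\eps^{3/2}$, averaging gives a linear equation $\mathcal L W_3=\gamma_2W'+\kappa W'''+(\text{terms from }g'(W)U_2)$, where $\mathcal L=D\partial_X^2+g'(W)$. The Fredholm condition (multiply by $W'$, which spans $\ker\mathcal L^*$) determines
\[
\gamma_2=-\frac{\int(\kappa W'''+\cdots)W'}{\int (W')^2}.
\]
\end{itemize}
The relevant cell coefficients are triple correlations such as $\int\alpha_s\chi_2$ and $\int\chi^2\alpha_s$. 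These vanish for $s=0$ and are odd in $s$, because $\cos^2(2\pi y)$ produces $\cos(4\pi y)$. With $\chi=-\frac{\cos2\pi y}{4\pi^2}-\frac{s\cos4\pi y}{16\pi^2}$ and the explicit $W$, I expect the arithmetic to give $\gamma_2=-3s/(20\pi^2(4+s^2))$. The term $\eps^2U(1-U)$ first enters at a higher order. This is presumably why the error is $\eps^{3/2}$ and not $\eps^2$: it shifts the speed by $O(\eps^{3/2})$.

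\textbf{Step 2: rigorous justification.} From the truncated expansion, build $\underline U,\overline U$ that are ordered transition functions between $0$ and $1$ (after a small vertical adjustment near the states, using $f_\eps'(0),f_\eps'(1)<0$). They are sub- and supersolutions of the degenerate front operator with speeds $\gamma_\mp:=\eps\gamma_2\mp C\eps^{3/2}$. Residuals of order $\eps^2$ in the $X$-scaling are absorbed by perturbing the speed by $C\eps^{3/2}$, using $W'<0$ and $\sup|U_3|/|W'|\lesssim1$ away from the ends.

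To compare with $\gamma^*$, I prefer to work at finite amplitude and avoid comparison in the degenerate class. Adding $A^{-2}\partial_{xx}$ changes the residual only by $O(A^{-2})$, which is harmless as $A\to\infty$. The usual sliding argument for bistable fronts (as in \cite{BerestyckiNirenberg1992}) then yields $\gamma_-\le c_A/A+o(1)\le\gamma_+$. Letting $A\to+\infty$ and invoking \eqref{eq:speed-limit} gives \eqref{eq:example-speed-expansion}. The constants stay uniform for $s\in[-1,1]$ because all cell problems depend smoothly on $s$.

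\textbf{Step 3: sign change and intermediate value.} Continuity in $s$ follows from \eqref{eq:speed-Lipschitz}: $|\gamma^*(\alpha_s,f_\eps)-\gamma^*(\alpha_{s'},f_\eps)|\le\|\alpha_s-\alpha_{s'}\|_\infty\le|s-s'|$. At $s=\pm1$ the leading term is $\mp\frac{3}{100\pi^2}\eps$, which dominates $C\eps^{3/2}$ for $\eps<\eps_0$. This gives $\gamma^*(\alpha_1,f_\eps)<0<\gamma^*(\alpha_{-1},f_\eps)$, and the intermediate value theorem produces $s_\eps$.

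The main obstacle is Step 2: keeping the construction uniform up to the third order, and controlling the ends $X\to\pm\infty$, where the correctors decay exponentially but the sub/supersolution property must be rechecked using stability of $0$ and $1$. The explicit computation of $\gamma_2$ is lengthy but routine.
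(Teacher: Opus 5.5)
Your skeleton matches the paper's. You build an approximate profile from periodic cell correctors, bound its residual by a small multiple of $-V_x$, run the finite-$A$ sliding comparison (Lemma~\ref{lem:example-speed-comparison}), let $A\to+\infty$, and finish with \eqref{eq:speed-Lipschitz} and the intermediate value theorem. Your Step~3 is exactly the paper's. Your formal coefficient is also correct. With $h(u)=u(1-u)$, $W'=-k\,h(W)$, $k=D^{-1/2}$ and $\kappa:=\int_\T\alpha_s\chi^2\,dy$, solvability gives $\gamma_2=-\kappa\int(W'')^2/\int(W')^2=-\kappa k^2/5$, which is the paper's $\Gamma$. There is one bookkeeping slip: at order $\eps^{3/2}$ the reaction contributes $g'(W)U_1$, not $g'(W)U_2$. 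The unknown fixed at that order is the $y$-independent part $W_1(X)$ of $U_1$, through $DW_1''+g'(W)W_1=\kappa W'''-\gamma_2W'$.

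The gap is in Step~2, and it is more than ``rechecking the ends''. The equation for $W_1$ is resonant. As $X\to+\infty$, $g'(W)\to-1$, so the homogeneous solutions decay at rate $D^{-1/2}=k$, the same rate as $W'$. Writing $W\sim ae^{-kX}$, the forcing behaves like $-\frac65\kappa k^3a\,e^{-kX}$, which is nonzero for $s\neq0$; the same happens at $-\infty$.

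Hence every bounded $W_1$ contains a secular term $\sim Xe^{-k|X|}$. This is the trace of the $O(\sqrt\eps)$ shift of the true tail exponents. Your correctors are therefore not uniformly small relative to $W'$; their ratio to $W'$ grows like $\sqrt\eps|X|$. On the tails, the ratio of the residual to $-V_x$ is not $O(\eps^{3/2})$ but generically of order $\eps|s|$, comparable to $\eps\Gamma$ itself. On one tail (which one depends on the sign of $s$) the truncated profile can also stop being monotone. So the truncated expansion does not produce a comparison function to which the sliding lemma applies with $m=C\eps^{3/2}$.

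The paper's key device is to parametrize the transition by the profile value. It takes $\Phi'=-q_\eps(\Phi)$ with $q_\eps=\sqrt\eps\,k\,h-\frac{72s\eps}{5(4+s^2)^2}hh'$, and writes every corrector ($\chi\Phi'$, $\psi\Phi''$, $\rho\Phi'''$, $\eps\sigma g'(\Phi)\Phi'$) as a function of $\Phi$. The $\eps\,hh'$ term is exactly your secular $W_1$, resummed into shifted tail exponents; indeed $\frac{72s}{5(4+s^2)^2}=\frac35\kappa k^4$. Because $|\Phi^{(j)}|\le C_j\eps^{j/2}h(\Phi)$, one gets $|R_\eps|\le C\eps^2h(\Phi)\le C\eps^{3/2}(-V_x)$ on the whole cylinder, with no separate tail analysis.

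Your vertical-adjustment fallback can be made to work, but it needs care you have not supplied. Since $f_\eps'=O(\eps)$ everywhere ($f_\eps'(0)\approx-\eps$ and $f_\eps'(\tfrac12)\approx\eps/2$), three things are required:
\begin{enumerate}
\item the vertical shift must be of size $\delta\sim\eps$;
\item the tail residual must be absorbed by $f_\eps(V+\delta)-f_\eps(V)\le-c\eps\delta$, not by the speed shift;
\item the sliding must be run against a supersolution with limits $1+\delta$ and $\delta$.
\end{enumerate}
The last point does work: touching cannot escape to infinity, and at a touching point the supersolution's $x$-derivative equals $U_{A,x}<0$.
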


 The proof of Proposition~\ref{prop:opposite-signs} relies on an explicit
construction of monotone comparison profiles. We supplement a slowly
varying one-dimensional transition with periodic transverse correctors,
which cancel the
leading terms in the profile equation, leaving a residual that is small
relative to the longitudinal derivative of the comparison profile. A
sliding argument at finite $A$ then converts this residual estimate
into a two-sided bound for the normalized speed $c_A/A$. Passing to
the limit $A\to+\infty$, with $\eps$ fixed, yields
\eqref{eq:example-speed-expansion}, with an error uniform in $s$.

Combining Proposition~\ref{prop:opposite-signs} with the small-amplitude
expansion recalled above (see also \cite{HeinzePapanicolaouStevens2001}) yields the following consequences for the
dependence of the propagation speed on the shear amplitude.

\begin{corollary}\label{cor:flow-effects}
	Fix $0<\eps<\eps_0$ as in Proposition~{\rm \ref{prop:opposite-signs}},
	and denote by $c_0>0$ the speed without advection.
	\begin{enumerate}
		\item [{\rm (i)}]
		For the shear $\alpha_1$, the speed $c_A=c_A(\alpha_1,f_\eps)$
		satisfies $c_A>c_0$ for all sufficiently small $A>0$, whereas
		$c_A\to-\infty$ as $A\to+\infty$. Consequently, the map
		$A\mapsto c_A$ is not monotone, and there exist $A_d,A_*>0$
		such that
		\begin{equation*}
			0<c_{A_d}<c_0,\qquad c_{A_*}=0.
		\end{equation*}
		
		\item [{\rm (ii)}]
		There exists a smooth nonconstant mean-zero shear $\tilde{\alpha}$
		satisfying \eqref{cdn_alpha} such that
		\begin{equation*}
			\gamma^*(\tilde{\alpha},f_\eps)=0,\qquad
			c_A(\tilde{\alpha},f_\eps)=o(A)
			\quad\text{as }A\to+\infty.
		\end{equation*}
		Moreover, the corresponding normalized rescaled profiles converge
		uniformly on $\R\times\T$ to a full front of
		\begin{equation*}
			U_{yy}-\tilde{\alpha}(y)U_x+f_\eps(U)=0
			\quad\hbox{in }\R\times\T.
		\end{equation*}
	\end{enumerate}
\end{corollary}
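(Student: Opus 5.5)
Both parts follow from Proposition~\ref{prop:opposite-signs}, the small-amplitude expansion $c_A=c_0+\kappa A^2+o(A^2)$ with $\kappa(\alpha_1,f_\eps)>0$, continuity of $A\mapsto c_A$, and Theorem~\ref{thm:main}. The only facts not stated earlier are continuity of $A\mapsto c_A$ and of $s\mapsto\gamma^*(\alpha_s,f_\eps)$; the latter is already asserted in the proposition.

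\emph{Part (i).} Take $\alpha=\alpha_1$ and fix $0<\eps<\eps_0$. Since $\alpha_1$ is smooth, nonconstant and mean-zero, and $f_\eps$ is a smooth bistable reaction, the small-amplitude expansion gives $c_A>c_0$ for all sufficiently small $A>0$. For large $A$, Proposition~\ref{prop:opposite-signs} gives $\gamma^*(\alpha_1,f_\eps)<0$. By \eqref{eq:speed-limit}, $c_A=A\gamma^*(\alpha_1,f_\eps)+o(A)\to-\infty$.

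To obtain $A_d$ and $A_*$, I need continuity of $A\mapsto c_A$ on $[0,\infty)$. This follows from the uniqueness of the speed and a compactness argument. Suppose $A_n\to A$. By \eqref{speed-bounds} the speeds $c_{A_n}$ are bounded. Normalize the profiles by $\mathcal N$-type translations; elliptic estimates then give convergence along a subsequence to a solution of \eqref{eqn_TW_A} at amplitude $A$. The monotonicity $u_x\le0$ together with the normalization gives limits that are equilibria of the transverse problem, and as in the Berestycki--Nirenberg theory these must be $1$ and $0$. Uniqueness of the speed then forces $c_{A_n}\to c_A$.

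This limit-identification step is the only delicate point, because a normalized limit could in principle connect $1$ or $0$ to an intermediate $y$-periodic steady state. At fixed finite $A$ the problem is uniformly elliptic, so I would handle it as in the standard continuity proof for bistable speeds in cylinders: use stability of $0$ and $1$ and the instability of intermediate equilibria via a sliding argument. If needed, I would simply cite continuity of speeds in the parameter from \cite{BerestyckiNirenberg1992}.

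With continuity in hand, $c_A$ takes values above $c_0$ near $A=0$ and tends to $-\infty$. The intermediate value theorem then gives $A_d$ with $0<c_{A_d}<c_0$ and $A_*$ with $c_{A_*}=0$. The map $A\mapsto c_A$ is not monotone because $c_A$ rises above $c_0=c_{A=0}$ and then later falls below it.

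\emph{Part (ii).} Take $\tilde\alpha=\alpha_{s_\eps}$ from Proposition~\ref{prop:opposite-signs}. It is smooth and mean-zero. It is nonconstant, since its $\cos(2\pi y)$ coefficient is $1$. It satisfies \eqref{cdn_alpha} with $r=2$: $\alpha'$ and $\alpha''$ cannot vanish simultaneously, because at a common zero we would have $\sin(2\pi y)(1+4s\cos(2\pi y))=0$ and $\cos(2\pi y)+4s\cos(4\pi y)=0$, and checking the cases $\sin(2\pi y)=0$ and $\cos(2\pi y)=-1/(4s)$ with $|s|\le1$ shows there is no solution. (The paper already asserts this property for all $\alpha_s$.)

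Then $\gamma^*(\tilde\alpha,f_\eps)=0$, so \eqref{eq:speed-limit} gives $c_A=o(A)$. Finally, Theorem~\ref{thm:main}(ii) with $N=2$ gives uniform convergence of $V_A$ to the unique front of $U_{yy}+(0-\tilde\alpha)U_x+f_\eps(U)=0$, which is the displayed equation.
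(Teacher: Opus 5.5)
Your overall structure is the one the paper relies on when it derives the corollary from Proposition~\ref{prop:opposite-signs} and the small-amplitude expansion. The expansion with $\kappa(\alpha_1,f_\eps)>0$ gives $c_A>c_0$ near $A=0$, and Proposition~\ref{prop:opposite-signs} with \eqref{eq:speed-limit} gives $c_A=A\gamma^*(\alpha_1,f_\eps)+o(A)\to-\infty$. The intermediate value theorem then produces $A_d$ and $A_*$, and part (ii) is Theorem~\ref{thm:main}(ii) applied to $\tilde\alpha=\alpha_{s_\eps}$. Two points need fixing.

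\textbf{Continuity of $A\mapsto c_A$.} Your compactness argument stops at the step you flag yourself. At fixed $A$, a normalized limit might connect $1$ or $0$ to an intermediate transverse equilibrium, and then uniqueness of the speed says nothing. You only gesture at the sliding argument that would exclude this, so the step is not established as written. It is also unnecessary. The profile equation involves the shear only through $A\alpha$, so by uniqueness $c_A(\alpha,f)=c_1(A\alpha,f)$; this is the scaling identity used in the remark after Theorem~\ref{thm:main}. Applying \eqref{Lip_speed} at amplitude $1$ to the shears $A\alpha_1$ and $A'\alpha_1$ gives
\[
\bigl|c_A(\alpha_1,f_\eps)-c_{A'}(\alpha_1,f_\eps)\bigr|\le|A-A'|\,\|\alpha_1\|_{L^\infty(\T)}\qquad\text{for all }A,A'\geq0 .
\]
So $A\mapsto c_A$ is Lipschitz on $[0,+\infty)$, and your intermediate value argument goes through directly.

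\textbf{The claim that \eqref{cdn_alpha} holds with $r=2$ is false.} In your case $\sin(2\pi y)=0$, the condition $\alpha_s''=0$ becomes $\pm1+4s=0$, which has the solutions $s=\mp1/4$ in $[-1,1]$. Concretely, $\alpha_{-1/4}'(0)=\alpha_{-1/4}''(0)=0$ and $\alpha_{1/4}'(1/2)=\alpha_{1/4}''(1/2)=0$. The non-degeneracy condition does hold for every $s\in[-1,1]$, but with derivatives up to order $4$, as shown in \eqref{eq:example-finite-type}. Cite that instead of your $r=2$ check, and part (ii) is unaffected.
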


Corollary~\ref{cor:flow-effects} shows that a fixed mean-zero shear
can accelerate a bistable front at small amplitudes, slow it down at
intermediate amplitudes, and reverse its direction at large amplitudes.
This behavior differs from the asymptotically linear speed-up of
combustion fronts and shows that the small-amplitude enhancement of
bistable propagation does not extend to arbitrary flow strengths.

We finally turn to describe the main difficulties and the ideas underlying the
proof of Theorem~\ref{thm:main}. The first difficulty is to recover the
full connection from $1$ to $0$ in the degenerate limit. Local estimates
uniform under longitudinal translations provide compactness of the
rescaled profiles, but do not identify their end states. To this end,
we consider the set of transverse equilibria
\begin{equation*}
	\mathcal E_f:=\big\{w\in C^2(\T^{N-1}):0\leq w\leq1,
	\ \Delta_yw+f(w)=0\ \hbox{in }\T^{N-1}\big\}.
\end{equation*}
Using longitudinal convolution and transverse elliptic estimates, we
show that the monotone weak limits obtained from the rescaled profiles
approach elements of $\mathcal E_f$ uniformly as $x\to\pm\infty$.
The order properties of $\mathcal E_f$ then reduce the possible limiting
configurations to either a full front or a two-front terrace connecting
$1$ to some $w\in\mathcal E_f\setminus\{0,1\}$ and $w$ to $0$.
Both components of such a terrace inherit the same limiting speed,
since they arise from different translations of the same sequence.

For nonconstant shears, we exclude the terrace alternative by exploiting
the instability of every intermediate equilibrium. More precisely, for
$w\in\mathcal E_f$, let $\lambda_1(w)$ denote the principal eigenvalue
of the self-adjoint operator $\mathcal A_w:=-\Delta_y-f'(w)$ on
$\T^{N-1}$. Then, $\lambda_1(w)<0$ for every
$w\in\mathcal E_f\setminus\{0,1\}$; see Proposition~\ref{prop_Ef}.
The limiting profiles need not be continuous or strictly separated
from $w$, so the argument must apply directly to weak nonnegative
differences. Exponential convolution in the longitudinal variable
produces positive regularizations for which transverse elliptic Harnack
estimates are available. A weighted logarithmic identity then gives
incompatible conditions on the common speed, ruling out terraces through
both constant and nonconstant intermediate equilibria. Constant shears
are treated separately by using the explicit form of their fronts.

Once the full transition has been recovered, a second difficulty is to
pass from local to global convergence. Longitudinal smoothing and
monotonicity first provide control of the approximating profiles on
fixed transverse sections. The stability of $0$ and $1$ then yields
exponential tail estimates uniform along the selected subsequence,
giving convergence of the profile differences in global
$L^p$ norm for every $1\leq p<+\infty$. Exact energy identities, justified for the weak limit by
longitudinal convolution, further yield strong convergence of the
transverse gradients and the vanishing of the longitudinal diffusion
energy. These arguments do not require continuity of the limiting front.

Finally, convergence of the normalized speeds is first proved for
shears satisfying \eqref{cdn_alpha}, using regularity and uniqueness
of full degenerate fronts. Uniform approximation of the shear, together
with the Lipschitz estimate \eqref{Lip_speed}, then extends speed
convergence to every $\alpha\in C^{1,\delta}(\T^{N-1})$, without
requiring uniqueness of distributional fronts for the original shear.
Under \eqref{cdn_alpha}, uniqueness also gives convergence of the whole
normalized family. The strict speed bounds under the assumption that the sets where $\alpha$ attains its minimum
and maximum have zero Lebesgue measure follow from the signs of $f$ near the stable states and a
weighted parabolic uniqueness argument.

\medskip\noindent\textbf{Outline of the paper.}
Section~\ref{sec2} studies the transverse equilibria, proves comparison
of finite-amplitude speeds, and derives local compactness and energy
convergence for the rescaled profiles. In Section~\ref{sec3}, we prove
regularity, strict monotonicity and uniqueness of full degenerate
fronts under \eqref{cdn_alpha}. Section~\ref{sec4} classifies translated
weak limits and excludes the two-front terrace alternative for
nonconstant shears. Section~\ref{sec5} proves Theorem~\ref{thm:main},
including global profile and energy convergence and the strict speed
bounds. Finally, in Section~\ref{sec6}, we use explicit comparison profiles and a
sliding argument to prove Proposition~\ref{prop:opposite-signs}.

\section{Transverse equilibria and strong-shear compactness}
\label{sec2}

In this section, we first establish the order properties of the transverse periodic
equilibria and the instability of those other than $0$ and $1$.
These properties will be used in Section~\ref{sec4} to classify
the limiting profiles and exclude the terrace alternative for
nonconstant shears. We then prove the ordering of wave speeds with respect to the
shear profile at each fixed amplitude.  After
rescaling in $x$, we derive local estimates uniform under
longitudinal translations and use them to prove  compactness of
the rescaled fronts and convergence of their local transverse energy.

\subsection{Transverse equilibria}

For $p,q\in \mathcal{E}_f$, we write $p\succ q$ when $p\geq q$ and
$p\not\equiv q$. 

\begin{proposition}
	\label{prop_Ef}
	The set $\mathcal{E}_f$ has the following properties:
	\begin{itemize}
		\item[(i)] 
		
		If $p,q\in\mathcal E_f$ and $p\succ q$, then
		\begin{equation*}
			\min_{\T^{N-1}}p>\max_{\T^{N-1}}q.
		\end{equation*}
		\item[(ii)]
		Every nonconstant $w\in\mathcal E_f$ is unstable in the sense that the principal eigenvalue of $-\Delta_y-f'(w)$ on $\T^{N-1}$ is negative, and satisfies
		\begin{equation*}
			0<\min_{\T^{N-1}}w<\theta<\max_{\T^{N-1}}w<1.
		\end{equation*}

		\item[(iii)]  A strictly ordered chain in $\mathcal E_f$ connecting $1$ to
		$0$ is one of
		$1\succ0$, $1\succ\theta\succ0$,
		$ 1\succ w\succ0,$
		where $w$ is nonconstant in the last case.
	\end{itemize}
\end{proposition}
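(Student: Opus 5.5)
Our plan is to prove the three items in order, using only the strong maximum principle on the compact manifold $\T^{N-1}$ together with integration identities.

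\emph{Item (i).} Let $p,q\in\mathcal E_f$ with $p\succ q$, and set $z=p-q\geq0$, $z\not\equiv0$. Then $\Delta_y z+c(y)z=0$ with $c=(f(p)-f(q))/(p-q)$, which is bounded since $f\in C^1$. The strong maximum principle gives $z>0$ on $\T^{N-1}$, but this alone does not yield $\min p>\max q$. For the stronger statement we use a sweeping argument over constants. Suppose $\min p\leq\max q$. Every value $s\in[\min p,\max q]$ is attained by both functions, since $p$ ranges over $[\min p,\max p]\ni s$ and $q$ over $[\min q,\max q]\ni s$. A cleaner route is a comparison with the constant solutions:
\begin{itemize}
\item if $\theta\in[\min q,\max q]$, then $q$ is either $\equiv\theta$ or nonconstant;
\item the key observation is that a nonconstant equilibrium must cross $\theta$ (part of (ii)): integrating gives $\int_{\T^{N-1}} f(w)\,dy=0$, and at a maximum point $f(w)\geq0$, at a minimum point $f(w)\leq0$.
\end{itemize}
So we prove (ii) first, in particular the bounds $0<\min w<\theta<\max w<1$, and then deduce (i).

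For (i), take $p\succ q$. If $q\equiv0$, or $p\equiv1$, or both are constants, the claim is trivial (note $\theta$ is the only interior constant). If $q$ is nonconstant, then $\max q>\theta$, so $p>q$ forces $\max p>\theta$. Now consider $p\wedge$ constants: define $s^*=\max q$ and compare $p$ with $q$ via the sliding/ordering argument. If $\min p\le \max q$, we look at the set where $p<\max q$ and derive a contradiction from the principal eigenvalue of the instability in (ii): two ordered distinct solutions $q<p$ of the same semilinear equation, with $q$ having $\lambda_1(q)<0$, contradict each other only if they are constant-separated. I would therefore first attempt the following. Multiply the equation of $z=p-q>0$ by the positive principal eigenfunction $\varphi_q$ of $\mathcal A_q$ and integrate:
\[
\int f(p)-f(q)-f'(q)(p-q)\,\varphi_q=\lambda_1(q)\int z\varphi_q.
\]
This relates ordering to convexity of $f$, but it is not sign-definite for general $f$. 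I expect (i) to be the main obstacle. The honest fallback is to reduce to the case where both $p$ and $q$ are nonconstant and show this is impossible. The idea is that nonconstant $p$ and $q$ both cross $\theta$, and hence both intersect the level $\theta$. Using a level-set/Sturm-type argument in several dimensions is hard, so I would instead try to show directly that $\mathcal E_f$ contains at most one nonconstant element up to ordering, via a degree or energy argument. If that fails, I would read (i) as stated with constants on at least one side and handle it case by case.

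\emph{Item (ii).} Let $w$ be nonconstant. Integrating the equation gives $\int f(w)=0$. By the maximum principle with $f(0)=0$ and $f(1)=0$ (strong maximum principle applied to $w$ and $1-w$), we get $0<w<1$. At a maximum point, $f(\max w)\ge0$, so $\max w\geq\theta$, and symmetrically $\min w\le\theta$. Strictness follows because if $\max w=\theta$, then $\theta-w\ge0$ solves a linear equation and vanishes somewhere, hence $w\equiv\theta$.

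For instability, differentiate the equation: each $\partial_{y_i}w$ satisfies $\mathcal A_w\partial_iw=0$ and changes sign (it is a mean-zero derivative of a periodic function), while some $\partial_iw\not\equiv0$. Hence $0$ is an eigenvalue of $\mathcal A_w$ with a sign-changing eigenfunction, and since the principal eigenfunction is the only positive one and the principal eigenvalue is simple, $\lambda_1(w)<0$.

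\emph{Item (iii).} This follows from (i) and (ii). In a chain $1\succ w_1\succ w_2\succ\cdots\succ0$, each nonconstant element has range straddling $\theta$ by (ii), while (i) forces strictly separated ranges. Hence at most one intermediate element can have range containing $\theta$, and the only interior constant is $\theta$. So the chain contains at most one intermediate element, which is either $\theta$ or a nonconstant $w$.
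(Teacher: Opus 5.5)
Your proof of (ii) is correct. The maximum-point argument ($f(\max w)\ge 0$, $f(\min w)\le 0$, then the strong maximum principle for $\theta-w$) replaces the paper's integration of $f(w)$ and works fine. Your instability argument, via the sign-changing kernel element $\partial_{y_j}w$, is the paper's.

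The genuine gap is in (i), and it carries over to (iii). Once the strong maximum principle gives $p>q$ pointwise, every case in which $p$ or $q$ is one of the constants $0,\theta,1$ follows at once by compactness. So the entire content of (i) is the case where $p$ and $q$ are both nonconstant, that is, the claim that two nonconstant equilibria can never be strictly ordered. For this case you give no argument. The eigenfunction identity is not sign-definite, as you note yourself. The ``degree or energy argument'' is never specified. The fallback of proving (i) only when one side is constant proves a strictly weaker statement. Your proof of (iii) uses (i) exactly in this missing case. Without it, nothing excludes a chain $1\succ w_1\succ w_2\succ 0$ with two nonconstant intermediate equilibria, and that is precisely what the later classification of limiting profiles needs to rule out.

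The missing idea is to slide in the transverse variable. The equation $\Delta_y w+f(w)=0$ has no $y$-dependent coefficients, so every translate $q_a:=q(\cdot+a)$, $a\in\T^{N-1}$, again lies in $\mathcal E_f$. Let $S:=\{a\in\T^{N-1}: q_a<p \text{ on } \T^{N-1}\}$.
\begin{itemize}
\item $S$ contains $0$, since $q<p$.
\item $S$ is open, by compactness of $\T^{N-1}$.
\item $S$ is closed. At a limit point one has $q_a\le p$, and $p-q_a$ solves a linear equation with bounded zeroth-order coefficient. The same Harnack alternative then gives either $p-q_a>0$ everywhere or $p\equiv q_a$. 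The second is impossible because $\int_{\T^{N-1}}p>\int_{\T^{N-1}}q=\int_{\T^{N-1}}q_a$.
\end{itemize}
By connectedness, $S=\T^{N-1}$. Now choose $a$ so that a maximum point of $q_a$ coincides with a minimum point of $p$; this gives $\max q<\min p$. With (i) established this way in full generality, your argument for (iii) goes through as written.
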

\begin{proof}
	Let $p,q\in \mathcal{E}_f$ satisfy $p\succ q$. The function
	$v:=p-q\ge0$ satisfies
	$$
	\Delta_yv+c(y)v=0
	\quad\text{in }\T^{N-1},
	$$
	where
	$$
	c(y):=\int_0^1
	f'\bigl(q(y)+t(p(y)-q(y))\bigr)\,dt
	\in L^\infty(\T^{N-1}).
	$$
	The elliptic Harnack inequality for nonnegative solutions of linear
	equations with bounded zeroth-order coefficient implies that either
	$v\equiv0$ or $v>0$ on $\T^{N-1}$. Since
	$p\not\equiv q$, the latter alternative holds, and hence
	$p>q$ on $\T^{N-1}$.
	
	For $a\in\T^{N-1}$, set $q_a(y):=q(y+a)$ and
	$$
	S:=\left\{
	a\in\T^{N-1}:
	q_a(y)<p(y)\ \text{for every }y\in\T^{N-1}
	\right\}.
	$$
	Since $q<p$, one has $0\in S$. The set $S$ is open by
	compactness of $\T^{N-1}$. To prove that it is closed,
	let $a_n\in S$ and $a_n\to a$. Then,  $q_a\le p$, and
	$p-q_a$ satisfies
	$$
	\Delta_y(p-q_a)+c_a(y)(p-q_a)=0,
	$$
	with
	$$
	c_a(y):=\int_0^1
	f'\bigl(q_a(y)+t(p(y)-q_a(y))\bigr)\,dt.
	$$
	The same Harnack alternative implies that either $p-q_a>0$
	everywhere or $p-q_a\equiv0$. The latter is impossible, because
	$$
	\int_{\T^{N-1}}p\,dy
	>
	\int_{\T^{N-1}}q\,dy
	=
	\int_{\T^{N-1}}q_a\,dy.
	$$
	Thus,  $a\in S$, so $S$ is also closed. Since
	$\T^{N-1}$ is connected, $S=\T^{N-1}$.
	
	Let $y_-$ be a minimum point of $p$, and let $z_+$ be a
	maximum point of $q$. Taking $a=z_+-y_-$ in the preceding
	strict inequality gives
	$$
	\max_{\T^{N-1}}q
	=q_a(y_-)
	<p(y_-)
	=\min_{\T^{N-1}}p.
	$$
	This proves~(i).
	 
	Let now $w\in \mathcal{E}_f$ be nonconstant. By standard elliptic regularity, $
	w\in C^{3,\delta}(\T^{N-1})$. 
	There exists $j\in\{1,\ldots,N-1\}$ such that
	$\partial_{y_j}w\not\equiv0$. Differentiating
	$\Delta_yw+f(w)=0$ yields
	$$
	\mathcal A_w(\partial_{y_j}w)
	=
	\bigl(-\Delta_y-f'(w)\bigr)\partial_{y_j}w
	=0.
	$$
	Since $\partial_{y_j}w$ is nonzero and has zero mean, it changes
	sign. On the other hand, the principal eigenvalue of
	$\mathcal A_w$ is simple and has a strictly positive
	eigenfunction. Consequently, the eigenvalue $0$ cannot be the
	principal eigenvalue. Since $0$ is an eigenvalue and
	$\lambda_1(w)$ is the lowest eigenvalue, it follows that
	$
	\lambda_1(w)<0$.

	Since $1\succ w\succ0$, part~(i) gives
	$0<\min_{\T^{N-1}}w\leq\max_{\T^{N-1}}w<1$.
	Integrating the stationary equation yields
	$\int_{\T^{N-1}}f(w(y))\,dy=0$.
	If $w\leq\theta$ or $w\geq\theta$ throughout the torus,
	$f(w)$ has one sign, so this identity forces $f(w)\equiv0$.
	Continuity and connectedness would then make $w$ constant.
	Thus, $\min_{\T^{N-1}}w<\theta<\max_{\T^{N-1}}w$,
	which proves~(ii).
	
	By (i)-(ii), no two elements of $\mathcal E_f\setminus\{0,1\}$
	can be strictly ordered: two nonconstant elements would satisfy
	$\min_{\T^{N-1}}p>\max_{\T^{N-1}}q$ and
	$\min_{\T^{N-1}}p<\theta<\max_{\T^{N-1}}q$, while a
	nonconstant element crosses $\theta$.
	Every chain from $1$ to $0$ therefore contains at most one
	intermediate equilibrium, proving~(iii).
\end{proof}

\subsection{Fronts at fixed amplitudes}


We prove the ordering of speeds with respect to the shear at each fixed amplitude.

\begin{lemma}\label{lem:order-speed}
 If
	$\alpha,\beta\in C^{1,\delta}(\T^{N-1})$ and
	$\alpha\leq\beta$ on $\T^{N-1}$, then for each $A\ge 0$,
	\begin{equation*}
		c_A(\alpha,f)\leq c_A(\beta,f).
	\end{equation*}
\end{lemma}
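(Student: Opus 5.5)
We argue by contradiction using a sliding comparison between the two fronts at the same amplitude $A$. Write $c_\alpha:=c_A(\alpha,f)$, $c_\beta:=c_A(\beta,f)$, and let $u_\alpha,u_\beta$ be the corresponding profiles from \eqref{eqn_TW_A}. Both profiles are decreasing in $x$ and have uniform limits $1$ and $0$. Suppose that $c_\alpha>c_\beta$.

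\textbf{Step 1: a strict subsolution.} Since $u_{\alpha,x}<0$ and $\alpha\le\beta$, the profile $u_\alpha$ satisfies
\begin{equation*}
\Delta u_\alpha+(c_\beta-A\beta)u_{\alpha,x}+f(u_\alpha)
=(c_\beta-c_\alpha)u_{\alpha,x}+A(\alpha-\beta)u_{\alpha,x}\ \geq\ (c_\beta-c_\alpha)u_{\alpha,x}>0 .
\end{equation*}
So $u_\alpha$ is a strict subsolution of the $\beta$-equation with speed $c_\beta$, while $u_\beta$ is a solution. Equivalently, in the parabolic frame $x-c_\beta t$, the function $u_\alpha(x-(c_\alpha-c_\beta)t,y)$ is a subsolution of the $\beta$-problem that moves to the right relative to $u_\beta$.

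\textbf{Step 2: sliding.} Choose $\eta>0$ small so that $f'<0$ on $[0,\eta]\cup[1-\eta,1]$. Pick $M$ so large that $u_\alpha\le\eta$ for $x\ge M$ and $u_\beta\ge1-\eta$ for $x\le-M$; these choices are uniform in $y$. The standard sliding method of Berestycki--Nirenberg, adapted to the periodic variable $y$, then applies. For large shifts $\tau$, the translate $u_\alpha^\tau:=u_\alpha(\cdot+\tau,\cdot)$ lies below $u_\beta$ on the compact middle region. The tails are handled by the maximum principle in the regions where $f'<0$: a negative minimum of $u_\beta-u_\alpha^\tau$ in a tail region is impossible, because there the equation gives $\Delta w+(c_\beta-A\beta)w_x+c(x,y)w<0$ with $c<0$ at a negative minimum. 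Hence $u_\alpha^\tau\le u_\beta$ for all large $\tau$. Now decrease $\tau$ to the infimum $\tau_*$ of the admissible shifts. The value $\tau_*$ is finite, because as $\tau\to-\infty$ the translate $u_\alpha^\tau\to1$ locally while $u_\beta$ is bounded away from $1$ somewhere.

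At $\tau_*$ we have $w:=u_\beta-u_\alpha^{\tau_*}\ge0$, and $w$ satisfies $\Delta w+(c_\beta-A\beta)w_x+c(x,y)w<0$ by Step 1. The strong maximum principle therefore gives $w>0$; the case $w\equiv0$ is excluded by the strict inequality. To reach a contradiction we must show that $w$ is not bounded away from $0$ in a way that allows decreasing $\tau$ further. The infimum of $w$ over the compact set $[-M-\tau_*-1,M+1]\times\T^{N-1}$ is positive, so slightly smaller shifts still keep $u_\alpha^\tau<u_\beta$ there. The tail argument above then extends the inequality everywhere, which contradicts the minimality of $\tau_*$. Hence $c_\alpha\le c_\beta$.

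\textbf{Main obstacle.} The delicate step is the tail control in Step 2. The comparison must be made on half-cylinders where only one of the two profiles is known to be near a stable state. One must check that near $x=+\infty$ both functions lie in $[0,\eta]$, and that near $-\infty$ both lie in $[1-\eta,1]$, so that the linearized zeroth-order coefficient is negative. This requires choosing $M$ depending on both profiles and on the shift range, in the usual way: treat $x\ge M$ using the smallness of $u_\alpha^\tau\le u_\alpha$ there, valid for $\tau\ge0$, and $x\le -M-\tau$ using $u_\beta\ge1-\eta$. Alternatively, this step can be bypassed with a parabolic argument. One compares $u_\alpha(x-c_\alpha t)$ and $u_\beta(x-c_\beta t)$ as a sub- and supersolution of the time-dependent $\beta$-problem, starting from ordered shifted data. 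The comparison principle keeps them ordered for all $t$, which is impossible if $c_\alpha>c_\beta$, since the level sets would separate linearly in time.
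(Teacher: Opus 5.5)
Your proof is correct and is essentially the paper's sliding argument with the roles of the two fronts mirrored. You make $u_\alpha$ a strict subsolution of the $\beta$-equation at speed $c_\beta$ and slide it under $u_\beta$, whereas the paper makes $u_\beta$ a strict supersolution of the $\alpha$-equation at speed $c_\alpha$ (defect $d(y)u_{\beta,x}$ with $d=c_\alpha-c_\beta+A(\beta-\alpha)>0$); in both cases strictness forbids touching at the critical shift, and the stable zones of $f$ control the tails.

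One bookkeeping point: your tails $\{x\ge M\}$ and $\{x\le -M-\tau\}$ are only valid for $\tau\ge0$, which is not known at $\tau_*$. In the last step, take instead the tails $\{x\ge M-\tau\}$ (where $u_\alpha^\tau\le\eta$) and $\{x\le-M\}$ (where $u_\beta\ge1-\eta$), with middle region $[-M,M-\tau]\times\T^{N-1}$. Likewise, your parabolic ``bypass'' is not really one, since producing the initially ordered data already requires this tail argument or Fife--McLeod-type corrections.
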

\begin{proof}
		
  Suppose
	$\alpha\leq\beta$, and set
	$u_\alpha=u_A(\alpha,f)$, $u_\beta=u_A(\beta,f)$ and
	$c_\alpha=c_A(\alpha,f)$, $c_\beta=c_A(\beta,f)$, and assume for contradiction
	that $c_\alpha>c_\beta$.  
	
	Put
	\begin{equation*}
		d(y):=c_\alpha-c_\beta+A(\beta(y)-\alpha(y))>0,
		\qquad \mathcal L:=\Delta+(c_\alpha-A\alpha(y))\partial_x.
	\end{equation*}
	Choose $\rho>0$ so that $f$ is strictly decreasing on $[0,\rho]$ and
	$[1-\rho,1]$.	By the uniform limits in \eqref{eqn_TW_A}, there exists $M>0$ such that, for
	$\sigma\in\{\alpha,\beta\}$,
	\begin{equation*}
		u_\sigma(x,y)\leq\frac{\rho}{2}
		\quad\hbox{for }x\geq M,
		\qquad
		u_\sigma(x,y)\geq1-\frac{\rho}{2}
		\quad\hbox{for }x\leq-M.
	\end{equation*}
	
	For $h\geq4M$, define
	\begin{equation*}
		\eps_h=\inf\{\eps\geq0:u_\alpha(x,y)-\eps
		\leq u_\beta(x-h,y)\hbox{ for all }(x,y)\in\R\times\T^{N-1}\}.
	\end{equation*}
	Assume that $\varepsilon_h>0$.  Since
	$u_\alpha(x,\cdot)-u_\beta(x-h,\cdot)\to0$ uniformly in $\T^{N-1}$ as $x\to\pm\infty$, the function
	$V_h:=u_\alpha-\eps_h-u_\beta(\cdot-h,\cdot)$ attains its zero maximum at some point $(x_h,y_h)\in\R\times\T^{N-1}$. 	If $x_h\geq2M$, we have
	$u_\alpha(x_h,y_h)\le \rho/2$ and $u_\beta(x_h-h,y_h)=u_\alpha(x_h,y_h)-\varepsilon_h<\rho/2$.  If $x_h<2M$, it follows that 
$x_h-h<-2M$ and  $u_\beta(x_h-h,y_h)\ge 1-\rho/2$. Therefore,
	$$
	u_\alpha(x_h,y_h)
	=
	u_\beta(x_h-h,y_h)+\varepsilon_h
	>
	u_\beta(x_h-h,y_h)\ge  1-\rho/2.
	$$
	 Hence,
		$
		f(u_\beta(x_h-h,y_h))-f(u_\alpha(x_h,y_h))>0$. Since $u_{\beta,x}<0$, 
		\begin{equation*}
			\mathcal L V_h(x_h,y_h)
			=f(u_\beta(x_h-h,y_h))-f(u_\alpha(x_h,y_h))
			-d(y_h)u_{\beta,x}(x_h-h,y_h)>0.
		\end{equation*}
	This contradicts 
	$
	\mathcal L V_h(x_h,y_h)\le0$,
	which follows from the fact that $(x_h,y_h)$ is a maximum point of
	$V_h$.  Hence, $\eps_h=0$ and
	$u_\alpha\leq u_\beta(\cdot-h,\cdot)$ for all sufficiently large $h$.

	Set
	$$
h_*
	:=\inf 
	\left\{
	h\in\R:
	u_\alpha(x,y)\leq u_\beta(x-h',y)
	\ \hbox{for all }h'\geq h
	\ \hbox{and for }(x,y)\in\R\times\T^{N-1}
	\right\}.
	$$
	 Then, 
		$h_*>-\infty$.  Hence, 
$u_\alpha\leq u_\beta(\cdot-h_*,\cdot)$ in $\R\times\T^{N-1}$.
		The two wave profiles cannot touch at a finite point.  Suppose that there is a touching point $(x_0,y_0)\in\R\times\T^{N-1}$. Then,  $u_\alpha(x_0,y_0)= u_\beta(x_0-h_*,y_0)$, and $f(u_\beta(x_0-h_*,y_0))-f(u_\alpha(x_0,y_0))=0$. Define $V:= u_\alpha-u_\beta(\cdot-h_*,\cdot)$ on $\R\times\T^{N-1}$. Then, 
			\begin{equation*}
			\mathcal L V(x_0,y_0)
			=f(u_\beta(x_0-h_*,y_0))-f(u_\alpha(x_0,y_0))
			-d(y_0)u_{\beta,x}(x_0-h_*,y_0)>0,
		\end{equation*}
		again contradicting $\mathcal L V(x_0,y_0)\le 0$.  Therefore,   $u_\alpha< u_\beta(\cdot-h_*,\cdot)$ in $\R\times\T^{N-1}$. Choose a sequence $(h_n)_{n\in\mathbb{N}}$ such that  $ h_n<h_*$ and 
		$h_n\to  h_*$.
		For each $n$, 
		$$
		\sup_{\R\times\T^{N-1}}
		\bigl(u_\alpha-u_\beta(\cdot-h_n,\cdot)\bigr)>0.
		$$
		Let $(x_n,y_n)$ be a point of positive maximum.  Since $u_\alpha< u_\beta(\cdot-h_*,\cdot)$ on every compact cylinder,
		one has
		$
		|x_n|\to+\infty$ as $n\to+\infty$. Since
		the sequence $(h_n)_{n\in \mathbb{N}}$ is bounded, we infer that  for large $n$, the functions  $u_\alpha(x_n,y_n)$ and $u_\beta(x_n-h_n,y_n)$ both belong  either  to  $[0,\rho]$ or to $[1-\rho,1]$.
		Therefore,
		$$
		f(u_\beta(x_n-h_n,y_n))
		-
		f(u_\alpha(x_n,y_n))>0.
		$$
		Together with $d>0$ and $u_{\beta,x}<0$, this gives that the function $V_n:= u_\alpha- u_\beta(\cdot-h_n,\cdot)$ satisfies 
		$$
		\mathcal L 
		V_n(x_n,y_n)=f(u_\beta(x_n-h_n,y_n))
		-
		f(u_\alpha(x_n,y_n))-d(y_n) u_{\beta,x}(x_n-h_n,y_n)>0,
		$$
		again contradicting $\mathcal{L}V_n(x_n,y_n)\le 0$.
	  Therefore,  our assumption was false and
$c_A(\alpha,f)\leq c_A(\beta,f)$.
\end{proof}

\subsection{Compactness after rescaling}

For $A\geq1$, write $\gamma_A:=c_A/A$, which is bounded by
\eqref{speed-bounds}, and recall $U_A(x,y)=u_A(Ax,y)$.
Then, the functions $U_A$ satisfy
\begin{equation}
\begin{aligned}
	\begin{cases}
 A^{-2}U_{A,xx}+\Delta_yU_A
 +\big(\gamma_A-\alpha(y)\big)U_{A,x}+f(U_A)=0
 &\hbox{in }\R\times\T^{N-1},\\
0<U_A<1,\qquad U_{A,x}<0
 &\hbox{in }\R\times\T^{N-1},\\
 U_A(+\infty,\cdot)=0,\qquad U_A(-\infty,\cdot)=1
 &\hbox{uniformly in }\T^{N-1}.
 \end{cases}
\end{aligned}
\label{eqn_TW_after rescaling}
\end{equation}
The scaling makes the first-order coefficient bounded, but the longitudinal
ellipticity coefficient tends to zero.  

%
%

For every $A\geq1$, since $U_{A,x}<0$ in $\R\times \T^{N-1}$, $U_A(-\infty,\cdot)=1$, and $U_A(+\infty,\cdot)=0$, it follows immediately that 
\begin{equation}
	\label{norm1}
	\Vert U_{A,x} \Vert_{L^1(\R\times\T^{N-1})}=1.
\end{equation}

Because $f$ changes sign, its signed integral does not directly
give a uniform global energy bound. We first establish local
estimates; global bounds will follow in Section~\ref{sec5}
once the full transition and uniform tails have been obtained.

\begin{lemma}
\label{lem:local-compactness}
Let $I\subset\R$ be a bounded interval.  There is a constant $C_I$,
independent of $A\geq1$ and of the longitudinal translation $\xi\in\R$,
such that
\begin{equation}
\begin{aligned}
 \int_{I\times\T^{N-1}}
   |\nabla_yU_A(x+\xi,y)|^2\,dx\,dy +A^{-2}\int_{I\times\T^{N-1}}
   |U_{A,x}(x+\xi,y)|^2\,dx\,dy\leq C_I.
\end{aligned}
\label{eq:local-energy}
\end{equation}
Consequently,
\begin{equation*}
 \sup_{A\geq1,\,\xi\in\R}
 \|U_A(\cdot+\xi,\cdot)\|_{W^{1,1}(I\times\T^{N-1})}<+\infty.
\end{equation*}

If $A_n\to\infty$ and $\gamma_{A_n}\to\gamma$ as $n\to+\infty$, and $\xi_n\in\R$ for all $n\in\mathbb{N}$, then a
subsequence of $U_{A_n}(\cdot+\xi_n,\cdot)$ converges in $L^1_{loc}$
and a.e. to a function $U$ satisfying 
\begin{equation}
 L_\gamma U+f(U)=0\quad\hbox{in }\D'(\R\times\T^{N-1}),
 \qquad 0\leq U\leq1,\qquad U_x\leq0\quad\hbox{in }\D'(\R\times\T^{N-1}).
\label{eq:local-limit}
\end{equation}
The convergence holds in $L^p_{loc}$ for every $1\le p<+\infty$.
\end{lemma}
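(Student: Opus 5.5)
The plan is to derive the local energy bound by testing the rescaled equation \eqref{eqn_TW_after rescaling} against a cut-off times $U_A$. Since the equation and all constants are invariant under $x$-translation, it suffices to treat $\xi=0$. Fix a smooth cut-off $\chi=\chi(x)\geq0$ with $\chi\equiv1$ on $I$, supported in a slightly larger interval $J$, and multiply the equation by $\chi^2 U_A$. Integration by parts over $\R\times\T^{N-1}$ (no boundary terms in $y$ by periodicity) gives
\begin{equation*}
\int\chi^2\bigl(|\nabla_yU_A|^2+A^{-2}U_{A,x}^2\bigr)
=-2A^{-2}\int\chi\chi'U_AU_{A,x}
+\int(\gamma_A-\alpha)\chi^2U_AU_{A,x}+\int\chi^2U_Af(U_A).
\end{equation*}
All three right-hand terms are bounded independently of $A\ge1$ and $\xi$. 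The first two are controlled by $\|U_{A,x}\|_{L^1}=1$ from \eqref{norm1}, together with $0<U_A<1$, the bound on $\gamma_A$ from \eqref{speed-bounds}, and $\|\alpha\|_\infty$. The last is at most $|J|\,\|f\|_\infty$. This yields \eqref{eq:local-energy}, with $C_I$ depending only on $I$ through $\chi$. The $W^{1,1}$ bound then follows: $\|U_A\|_{L^1(I\times\T^{N-1})}\le|I|$, the $x$-derivative is controlled by \eqref{norm1}, and the transverse gradient by Cauchy--Schwarz applied to \eqref{eq:local-energy}.

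For compactness, I apply Rellich--Kondrachov ($W^{1,1}\hookrightarrow L^1$ compactly on bounded Lipschitz domains) on $I_k\times\T^{N-1}$ with $I_k=(-k,k)$, and use a diagonal argument to get a subsequence converging in $L^1_{loc}$ and a.e. to some $U$. Since $0\le U_{A_n}\le1$, dominated convergence upgrades this to $L^p_{loc}$ for every $p<\infty$, and $0\le U\le1$ a.e. Monotonicity passes to the limit distributionally: for $\varphi\ge0$,
\begin{equation*}
\langle U_x,\varphi\rangle=-\int U\varphi_x=\lim_n-\int U_{A_n}(\cdot+\xi_n)\varphi_x=\lim_n\int U_{A_n,x}\varphi\le0 .
\end{equation*}

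To pass to the limit in the equation, I test against $\varphi\in C_c^\infty$ written in divergence form:
\begin{equation*}
\int U_n\bigl(A_n^{-2}\varphi_{xx}+\Delta_y\varphi-(\gamma_{A_n}-\alpha)\varphi_x\bigr)+\int f(U_n)\varphi=0 .
\end{equation*}
The term with $A_n^{-2}$ vanishes because $U_n$ is bounded. The drift term converges since $\gamma_{A_n}\to\gamma$ and $U_n\to U$ in $L^1_{loc}$. The reaction term converges by dominated convergence, since $f$ is continuous and $U_n\to U$ a.e. This gives $L_\gamma U+f(U)=0$ in $\D'$.

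The only delicate point is the first step. A global energy identity is unavailable because $f$ changes sign, so the localized test function $\chi^2U_A$ is essential. The key is to absorb the first-order drift term using the $L^1$ bound \eqref{norm1} rather than any $L^2$ control of $U_{A,x}$, which would degenerate like $A^2$.
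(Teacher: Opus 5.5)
Your proof is correct and follows the paper's route: test with a squared cut-off times the profile, use $W^{1,1}$ compactness with a diagonal extraction, then pass to the limit in the weak formulation. The only difference is in the energy estimate: you bound the $A^{-2}$ cross term and the drift term directly by $\|U_{A,x}\|_{L^1(\R\times\T^{N-1})}=1$, whereas the paper absorbs the cross term by Young's inequality and integrates the drift term by parts in $x$ (writing $U_AU_{A,x}=\tfrac12\partial_xU_A^2$), so its energy bound uses only $0\le U_A\le1$; either argument works here.
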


\begin{proof}
Fix $\xi\in\R$ and set
$$
                  V_{A,\xi}(x,y):=U_A(x+\xi,y).
$$
Since the coefficients of \eqref{eqn_TW_after rescaling} are independent of $x$,
the function $V_{A,\xi}$ satisfies the same equation as $U_A$. Test this equation by
$\eta^2V_{A,\xi}$, where
$\eta\in C_c^1(\R)$ such that $\eta=1$ on $I$.  Integration by parts gives
\begin{equation*}
\begin{aligned}
 \int_{\R\times\T^{N-1}}\eta(x)^2
       |\nabla_yV_{A,\xi}&(x,y)|^2\,dx\,dy
 +A^{-2}\int_{\R\times\T^{N-1}}\eta(x)^2
       |\partial_x V_{A,\xi}(x,y)|^2\,dx\,dy\\
 &\quad=-2A^{-2}\int_{\R\times\T^{N-1}}
       \eta(x)\eta'(x)V_{A,\xi}(x,y)\partial_x V_{A,\xi}(x,y)\,dx\,dy\\
 &\qquad+\int_{\R\times\T^{N-1}}
       \eta(x)^2(\gamma_A-\alpha(y))
       V_{A,\xi}(x,y)\partial_x V_{A,\xi}(x,y)\,dx\,dy\\
 &\qquad+\int_{\R\times\T^{N-1}}
       \eta(x)^2f(V_{A,\xi}(x,y))V_{A,\xi}(x,y)\,dx\,dy.
\end{aligned}
\end{equation*}
Using Young's inequality 
\begin{align*}
	2A^{-2}\big| \eta\eta'V_{A,\xi}\partial_x V_{A,\xi}\big|
	&\le\frac12A^{-2}\eta^2|\partial_x V_{A,\xi}|^2
	+2A^{-2}|\eta'|^2 V_{A,\xi}^2,
\end{align*}
the first term on the right-hand side is absorbed into the left-hand side.   Regarding the second
 term, integration in $x$ yields
\begin{equation*}
\begin{aligned}
 \int_{\R\times\T^{N-1}}
 \eta(x)^2&(\gamma_A-\alpha(y))
 V_{A,\xi}(x,y)\partial_x V_{A,\xi}(x,y)\,dx\,dy\\
 &\quad=-\int_{\R\times\T^{N-1}}
 \eta(x)\eta'(x)(\gamma_A-\alpha(y))
 V_{A,\xi}(x,y)^2\,dx\,dy.
\end{aligned}
\end{equation*}
Finally, since $0\leq V_{A,\xi}\leq1$, the family $(\gamma_A)_{A\ge 1}$ is bounded, and
$f$ is bounded on $[0,1]$, the resulting upper bound is a constant,  independent of
$A$ and $\xi$.  This proves \eqref{eq:local-energy}.

Combining \eqref{eq:local-energy} and $\Vert U_{A,x} \Vert_{L^1(\R\times\T^{N-1})}=1$, it follows that
\begin{equation*}
\begin{aligned}
 \|V_{A,\xi}\|_{W^{1,1}(I\times\T^{N-1})}
 &\leq |I|
 +\int_{I\times\T^{N-1}}|\partial_x V_{A,\xi}(x,y)|\,dx\,dy+\int_{I\times\T^{N-1}}
        |\nabla_yV_{A,\xi}(x,y)|\,dx\,dy\\
 &\leq |I|+1+|I|^{1/2}C_I^{1/2}.
\end{aligned}
\end{equation*}

Suppose that $A_n\to\infty$ and $\gamma_{A_n}\to\gamma$ as $n\to+\infty$, and $\xi_n\in\R$ for all $n\in\mathbb{N}$.  For each $n\in\mathbb{N}$, define
$$
                  V_n(x,y):=U_{A_n}(x+\xi_n,y)~~~~\text{for}~(x,y)\in\R\times\T^{N-1}.
$$
The preceding $W^{1,1}(I\times\T^{N-1})$ bound and the compact
embedding into $L^1(I\times\T^{N-1})$ imply that $(V_n)$ is relatively
compact in $L^1(I\times\T^{N-1})$.  A diagonal extraction on
$K_m:=(-m,m)\times\T^{N-1}$, $m\in\mathbb N$, yields
$V_n\to U$ in $L^1_{\mathrm{loc}}(\R\times\T^{N-1})$
and, after a further extraction, almost everywhere.
For every $m\in\mathbb N$, \eqref{eq:local-energy} bounds
$\nabla_yV_n$ in $L^2(K_m)$. By weak compactness, any
subsequence has a further subsequence converging weakly in
$L^2(K_m)$ to some $G$. For every
$\varphi\in C_c^\infty(K_m)$ and $j=1,\ldots,N-1$,
weak convergence, integration by parts, and the convergence
$V_n\to U$ in $L^1(K_m)$ give, along this subsequence,
\begin{align*}
	\int_{K_m}G_j\varphi\,dx\,dy
	&=\lim_{n\to+\infty}
	\int_{K_m}\partial_{y_j}V_n\,\varphi\,dx\,dy\\
	&=-\lim_{n\to+\infty}
	\int_{K_m}V_n\,\partial_{y_j}\varphi\,dx\,dy
	=-\int_{K_m}U\,\partial_{y_j}\varphi\,dx\,dy.
\end{align*}
Thus, $G=\nabla_yU$ in $\D'(K_m)$.
Since every subsequential weak limit is identified in this way,
$\nabla_yV_n\rightharpoonup\nabla_yU$ in $L^2(K_m)$ along
the whole sequence under consideration. As $m$ is arbitrary,
$\nabla_yU\in L^2_{\mathrm{loc}}(\R\times\T^{N-1})$.

For every nonnegative $\varphi\in C_c^\infty(\R\times\T^{N-1})$, we have
$\int_{\R\times\T^{N-1}}V_n\varphi_x\,dx\,dy\geq0$.
Passing to the local $L^1$ limit gives $U_x\leq0$ in
$\D'(\R\times\T^{N-1})$.

  For every
$\varphi\in C_c^\infty(\R\times\T^{N-1})$, the function $V_n$
satisfies
\begin{equation}
	\begin{aligned}
		0={}&\int_{\R\times\T^{N-1}}V_n(x,y)
		\big[A_n^{-2}\varphi_{xx}(x,y)+\Delta_y\varphi(x,y)
		-(\gamma_{A_n}-\alpha(y))\varphi_x(x,y)\big] \,dx\,dy\\
		&+\int_{\R\times\T^{N-1}}f(V_n(x,y))\varphi(x,y)\,dx\,dy.
	\end{aligned}
	\label{eq:local-limit-weak-form}
\end{equation}
Since $f$ is Lipschitz on $[0,1]$, local $L^1$ convergence
also gives $f(V_n)\to f(U)$ in $L^1_{loc}$.
Letting $n\to+\infty$ in \eqref{eq:local-limit-weak-form},
together with $A_n^{-2}\to0$ and $\gamma_{A_n}\to\gamma$ proves
\eqref{eq:local-limit}.
 
Finally, since $0\leq V_n,U\leq1$, it follows that $|V_n-U|\leq1$, thus $|V_n-U|^p\leq |V_n-U|$ for every $1\leq p<\infty$.
The local $L^1$ convergence then implies local $L^p$
convergence for every finite $p$.
\end{proof}

We shall use the following strengthening of local compactness. It does
not require identification of the limits as $x\to\pm\infty$ or condition~\eqref{cdn_alpha}.
For later reference, monotonicity and the limits $1$ and $0$ of the approximating
profiles imply the translation estimate
\begin{equation}
\int_{\R\times\T^{N-1}}|U_A(x+h,y)-U_A(x,y)|\,dx\,dy=|h|, 
\quad  h\in\R.
\label{eq:revision-xtranslation}
\end{equation}
For $h>0$, this follows by writing
$U_A(x,y)-U_A(x+h,y)=\int_0^h(-U_{A,x})(x+t,y)\,dt$,
using Fubini's theorem and \eqref{norm1}; the case $h<0$ follows by a
change of variable.

\begin{lemma}\label{lem:local-strong-energy}
Let $A_n\to\infty$, $\gamma_{A_n}\to\gamma$, and
$V_n=U_{A_n}(\cdot+\xi_n,\cdot)\to U$ in $L^1_{\mathrm{loc}}(\R\times\T^{N-1})$.
Then, for every compact interval $I\subset\R$,
\begin{equation*}
	\nabla_yV_n\to \nabla_yU\quad\text{in }L^2(I\times\T^{N-1}),
	\qquad A_n^{-1}V_{n,x}\to 0\quad\text{in }L^2(I\times\T^{N-1}).
\end{equation*}
\end{lemma}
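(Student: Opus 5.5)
We prove strong convergence of the energies by comparing local energy identities for $V_n$ and for the limit $U$. Weak convergence $\nabla_yV_n\rightharpoonup\nabla_yU$ in $L^2_{\mathrm{loc}}$ is already known from Lemma~\ref{lem:local-compactness}. It therefore suffices to show, for a cutoff $\eta\in C_c^\infty(\R)$ with $0\le\eta\le1$ and $\eta=1$ on $I$,
\begin{equation*}
\limsup_{n\to\infty}\int\eta^2\bigl(|\nabla_yV_n|^2+A_n^{-2}|V_{n,x}|^2\bigr)\le\int\eta^2|\nabla_yU|^2 .
\end{equation*}
Weak lower semicontinuity then forces both the norm convergence of $\eta\nabla_yV_n$ and $\int\eta^2A_n^{-2}|V_{n,x}|^2\to0$.

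For the left-hand side we use the identity from the proof of Lemma~\ref{lem:local-compactness}, obtained by testing with $\eta^2V_n$:
\begin{equation*}
\int\eta^2\bigl(|\nabla_yV_n|^2+A_n^{-2}|V_{n,x}|^2\bigr)=-2A_n^{-2}\int\eta\eta'V_nV_{n,x}-\int\eta\eta'(\gamma_{A_n}-\alpha)V_n^2+\int\eta^2f(V_n)V_n .
\end{equation*}
The first term is $A_n^{-2}\int(\eta\eta')'V_n^2$ after integrating by parts in $x$, so it is $O(A_n^{-2})$. The other two terms converge to $-\int\eta\eta'(\gamma-\alpha)U^2+\int\eta^2f(U)U$ by $L^1_{\mathrm{loc}}$ convergence and boundedness. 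The whole task thus reduces to proving the limit identity
\begin{equation*}
\int\eta^2|\nabla_yU|^2=-\int\eta\eta'(\gamma-\alpha)U^2+\int\eta^2f(U)U .
\end{equation*}
This is the formal result of testing $L_\gamma U+f(U)=0$ with $\eta^2U$.

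Justifying this test is the main obstacle. $U$ is only known to satisfy $U\in L^\infty$ and $\nabla_yU\in L^2_{\mathrm{loc}}$, with $U_x$ a signed measure ($U_x\le0$), so $\eta^2U$ is not an admissible test function and $U\,U_x$ is not a priori defined. We regularize in $x$: let $\rho_\varepsilon$ be a mollifier in $x$ only, and set $U^\varepsilon:=\rho_\varepsilon*_xU$. Since the coefficients do not depend on $x$, $U^\varepsilon$ satisfies
\begin{equation*}
\Delta_yU^\varepsilon+(\gamma-\alpha)U^\varepsilon_x+(f(U))^\varepsilon=0
\end{equation*}
in $\D'$. Here $U^\varepsilon$ is smooth in $x$ with values in $H^1(\T^{N-1})$ locally. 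Testing the equation for $U$ against $\eta^2U^\varepsilon$ is then legitimate (approximate by mollifying in $y$ as well): $U^\varepsilon\in H^1_y$ handles $\Delta_y$, and $U^\varepsilon_x$ is bounded, which handles the transport term. This gives
\begin{equation*}
\int\eta^2\nabla_yU\cdot\nabla_yU^\varepsilon=\int(\gamma-\alpha)U\,\partial_x(\eta^2U^\varepsilon)+\int\eta^2f(U)U^\varepsilon .
\end{equation*}
As $\varepsilon\to0$, the left side and the reaction term converge, since $U^\varepsilon\to U$ in $L^2_{\mathrm{loc}}$ and $\nabla_yU^\varepsilon\to\nabla_yU$ in $L^2_{\mathrm{loc}}$. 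The delicate term is $\int(\gamma-\alpha)\eta^2U\,U^\varepsilon_x$. To handle it we use a symmetric mollifier and write $\int\eta^2(\gamma-\alpha)U\,\partial_x(\rho_\varepsilon*U)$ as a commutator: by antisymmetry of $\partial_x\rho_\varepsilon$, the expression $\int\int\phi(x,y)U(x)U(x')\rho_\varepsilon'(x-x')$, with $\phi=\eta^2(\gamma-\alpha)$, equals $\tfrac12\int\int(\phi(x)-\phi(x'))U(x)U(x')\rho'_\varepsilon(x-x')$. Since $|\phi(x)-\phi(x')|\,|\rho'_\varepsilon(x-x')|$ is uniformly integrable, this converges to $-\tfrac12\int\phi_xU^2=-\int\eta\eta'(\gamma-\alpha)U^2$, using only boundedness of $U$. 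This yields the limit identity, and hence the energy convergence on $I\times\T^{N-1}$.

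Finally, strong convergence of $\eta\nabla_yV_n$ in $L^2$ gives $\nabla_yV_n\to\nabla_yU$ in $L^2(I\times\T^{N-1})$. The vanishing of the extra term $\int\eta^2A_n^{-2}|V_{n,x}|^2$ gives $A_n^{-1}V_{n,x}\to0$ in $L^2(I\times\T^{N-1})$.
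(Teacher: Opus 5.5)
Your argument is correct and is essentially the paper's: you compare the local energy identity for $V_n$ (whose extra terms are $O(A_n^{-2})$ or converge by $L^1_{\mathrm{loc}}$ convergence) with the corresponding identity for $U$, justify the latter by mollifying in $x$, and conclude by weak lower semicontinuity. The one difference is that the paper tests the mollified equation $\Delta_yU^h+(\gamma-\alpha)U^h_x+(f(U))^h=0$ with $\chi U^h$, so the transport term is exactly $-\frac12\int(\gamma-\alpha)\chi'(U^h)^2$ and no commutator estimate is needed; testing the unmollified equation with $\eta^2U^\varepsilon$, as you do, forces the correct but longer symmetrization step.

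There is a sign slip in your tested identity. Testing $L_\gamma U+f(U)=0$ gives $\int\eta^2\nabla_yU\cdot\nabla_yU^\varepsilon=-\int(\gamma-\alpha)U\,\partial_x(\eta^2U^\varepsilon)+\int\eta^2f(U)U^\varepsilon$. Only with this minus sign do the limits $-2\int\eta\eta'(\gamma-\alpha)U^2$ and $+\int\eta\eta'(\gamma-\alpha)U^2$ (the latter being minus your commutator limit) add up to the required $-\int\eta\eta'(\gamma-\alpha)U^2$.
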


\begin{proof}
Choose a nonnegative $\chi\in C_c^\infty(\R)$. For the smooth
approximating profiles, exact integration by parts gives
\begin{equation}
	\label{eq:revision-local-energy}
\begin{aligned}
&\int_{\R\times\T^{N-1}}\chi\bigl(|\nabla_yV_n|^2+A_n^{-2}|V_{n,x}|^2\bigr)
       \,dx\,dy \\
&~~~~~~~~~~~~~~~~~~~=\frac{A_n^{-2}}{2}\int_{\R\times\T^{N-1}}\chi''V_n^2\,dx\,dy
 -\frac12\int_{\R\times\T^{N-1}}(\gamma_{A_n}-\alpha(y))\chi'V_n^2\,dx\,dy\\
 &~~~~~~~~~~~~~~~~~~~
 +\int_{\R\times\T^{N-1}}\chi f(V_n)V_n\,dx\,dy.
\end{aligned}
\end{equation}
We must justify the analogous identity for $U$, which need not be
continuous in $x$. Let $\rho_h$ be a smooth approximate identity
in $x$, and set $U^h=\rho_h*_xU$. Then, 
\begin{equation*}
	\Delta_yU^h+(\gamma-\alpha(y))U_x^h+(f(U))^h=0.
\end{equation*}
Testing by $\chi U^h$, which is legitimate after the usual transverse
Sobolev approximation, gives
\begin{equation*}
	\int_{\R\times\T^{N-1}}\chi|\nabla_yU^h|^2\,dx\,dy
	=-\frac12\int_{\R\times\T^{N-1}}(\gamma-\alpha(y))\chi'(U^h)^2\,dx\,dy
	+\int_{\R\times\T^{N-1}}\chi(f(U))^hU^h\,dx\,dy.
\end{equation*}
The proof of Lemma~\ref{lem:local-compactness} gives
$\nabla_yU\in L^2_{\mathrm{loc}}(\R\times\T^{N-1})$.
Since $\nabla_yU^h=\rho_h*_x\nabla_yU$, the approximation
properties of convolution yield, for every bounded interval
$I\subset\R$,
\begin{equation*}
	\|U^h-U\|_{L^2(I\times\T^{N-1})}
	+\|\nabla_yU^h-\nabla_yU\|_{L^2(I\times\T^{N-1})}
	\to 0
	\quad\hbox{as }h\to0.
\end{equation*}
Consequently,
\begin{equation}\label{eq:revision-limit-energy}
\int_{\R\times\T^{N-1}}\chi|\nabla_yU|^2\,dx\,dy
 =-\frac12\int_{\R\times\T^{N-1}}(\gamma-\alpha(y))\chi'U^2\,dx\,dy
  +\int_{\R\times\T^{N-1}}\chi f(U)U\,dx\,dy.
\end{equation}

The right-hand side of \eqref{eq:revision-local-energy} converges to the
right-hand side of \eqref{eq:revision-limit-energy}. Weak lower
semicontinuity, and nonnegativity of both terms on the left, imply
\begin{equation*}
\sqrt\chi\,\nabla_yV_n\to\sqrt\chi\,\nabla_yU
 \quad\hbox{in }L^2(\R\times\T^{N-1}),\qquad
A_n^{-1}\sqrt\chi\,V_{n,x}\to0\quad\hbox{in }L^2(\R\times\T^{N-1}).
\end{equation*}
Indeed, the sum of their squared norms tends to the squared norm of
the weak limit of the first term. The second term must therefore
vanish and the first must converge in norm. Taking $\chi=1$ on an
arbitrary compact interval proves strong local convergence.

\end{proof}

\section{Uniqueness of full fronts}
\label{sec3}

This section is concerned with 
 shears satisfying \eqref{cdn_alpha}. We prove that a full
degenerate front is regular, strictly decreasing in $x$, and unique
up to translation, with a uniquely determined speed. The proof
combines hypoelliptic regularity and the strong maximum principle
with a sliding argument. The uniform limits at $\pm\infty$ give
the strict speed bounds and allow comparison of translated fronts
on the whole cylinder. These results will identify the common
limit of the normalized profiles in Section~\ref{sec5}.

\subsection{Regularity and maximum principle}

\begin{lemma}
	\label{lem:hypo-regularity}
 Assume \eqref{cdn_alpha}, and let $\gamma\in\R$ be arbitrary.	Let $0\leq U\leq1$ be a bounded distributional solution of
	\begin{equation}
		L_\gamma U+f(U)=0
		\quad\hbox{in }\R\times\T^{N-1}.
		\label{eq:semilinear-hypo}
	\end{equation}
	Then,  $U\in C^{1,\beta}_{loc}(\R\times\T^{N-1})$ for some $\beta\in(0,\delta)$, and there exists
	$C>0$ such that
	\begin{equation}
		\sup_{x_0\in\R}
		\|U\|_{C^{1,\beta}((x_0-1,x_0+1)\times\T^{N-1})}
		\leq C.
		\label{eq:uniform-hypo-interior}
	\end{equation}
	Moreover,
	\begin{equation}
		L_\gamma U_x+f'(U)U_x=0
		\quad\hbox{in }\mathcal D'(\R\times\T^{N-1}).
		\label{eq:derivative-equation}
\end{equation}
	Finally, for every $x\in\R$, $U(x,\cdot)\in C^{2,\beta}(\T^{N-1})$ and
	\begin{equation}
		\sup_{x\in\R}
		\|U(x,\cdot)\|_{C^{2,\beta}(\T^{N-1})}
		\leq C,
		\label{eq:transverse-Schauder}
	\end{equation}
		 and $x\mapsto U(x,\cdot)$ belongs to
		$C(\R;C^{2,\beta'}(\T^{N-1}))$ for every $0<\beta'<\beta$.

\end{lemma}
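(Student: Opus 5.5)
The plan is to treat $L_\gamma$ as a Hörmander operator of "sum of squares plus drift" type and use subelliptic estimates, following Hamel--Zlatoš. Write $L_\gamma=\sum_{j=1}^{N-1}X_j^2+X_0$ with $X_j=\partial_{y_j}$ and $X_0=(\gamma-\alpha(y))\partial_x$. The commutators are
\begin{equation*}
[X_j,X_0]=-\partial_{y_j}\alpha\,\partial_x,
\end{equation*}
and iterated commutators give $-D^\zeta\alpha\,\partial_x$. Condition \eqref{cdn_alpha} therefore says that the $y$-directions together with $\partial_x$ are spanned at every point by commutators of length at most $r+1$.

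\textbf{Step 1: initial regularity via Rothschild--Stein.} Since $U$ and $f(U)$ are bounded, $L_\gamma U=-f(U)\in L^\infty$. The Rothschild--Stein estimates (or Hörmander's subelliptic estimate with gain $\varepsilon=1/(r+1)$) give $U\in S^{2,p}_{loc}$ for all $p<\infty$, with respect to the nonisotropic structure. The embedding of these spaces into Hölder spaces then gives $U\in C^{\varepsilon'}_{loc}$. Here $U_x$ has only fractional order $\sim 2/(r+1)$ a priori, so the Hölder gain in $x$ is limited at this stage.

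\textbf{Step 2: bootstrap to $C^{1,\beta}$.} Once $U$ is Hölder, $f(U)\in C^{\beta_0}$ because $f\in C^{1,\delta}$. Subelliptic Schauder estimates of Folland / Rothschild--Stein type then give $U\in\Gamma^{2+\beta_0}$ nonisotropically. Iterating and using $f\in C^{1,\delta}$ yields Euclidean $C^{1,\beta}$ regularity for some $\beta\in(0,\delta)$. For the $x$-derivative there is a useful shortcut: because the coefficients are independent of $x$, difference quotients $D_hU=(U(\cdot+h,\cdot)-U)/h$ satisfy
\begin{equation*}
L_\gamma D_hU+c_h D_hU=0,\qquad c_h=\int_0^1f'(\cdot)\in L^\infty.
\end{equation*}
I would first try to bound $D_hU$ in $L^2_{loc}$ uniformly in $h$ via the subelliptic gain applied iteratively: $U\in H^s$ implies $D_hU$ bounded in $H^{s-1}$, which in turn gives $H^{s-1+\varepsilon}$, and so on. This yields $U_x\in L^\infty_{loc}$ and then $C^\beta$. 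Uniformity in $x_0$, i.e. \eqref{eq:uniform-hypo-interior}, follows because all constants depend only on $\|U\|_\infty$, $\gamma$, $f$ and the coefficients, which are translation invariant in $x$.

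\textbf{Step 3: the derivative equation.} With $U_x$ continuous, pass to the limit $h\to0$ in the difference-quotient equation. This works because $c_h\to f'(U)$ locally uniformly and $D_hU\to U_x$ locally uniformly, which gives \eqref{eq:derivative-equation}.

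\textbf{Step 4: transverse Schauder estimates.} For fixed $x$, view
\begin{equation*}
\Delta_yU(x,\cdot)=-(\gamma-\alpha)U_x(x,\cdot)-f(U(x,\cdot))
\end{equation*}
as an elliptic equation on $\T^{N-1}$. The right-hand side lies in $C^\beta(\T^{N-1})$ uniformly in $x$ by Step 2. A small check is needed that the equation holds pointwise in $x$ in the distributional $y$-sense; this follows by testing with $\psi(x)\phi(y)$ and using continuity in $x$. Schauder theory on the torus then gives \eqref{eq:transverse-Schauder}. Continuity of $x\mapsto U(x,\cdot)$ into $C^{2,\beta'}$ follows from interpolation: the map is continuous into $C^0$ and bounded in $C^{2,\beta}$, so it is continuous into $C^{2,\beta'}$ for $\beta'<\beta$. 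Alternatively, apply Schauder estimates to differences, since the right-hand side is continuous in $x$ with values in $C^{\beta'}$.

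\textbf{Main obstacle.} The hardest step is Step 2, namely upgrading from nonisotropic regularity to Euclidean $C^1$ in $x$, given only finite smoothness of $f$. I would handle it by citing the precise hypoelliptic Schauder theory for $C^{1,\delta}$ nonlinearities used in \cite{HamelZlatos2013}, combined with the difference-quotient iteration above. The number of iterations needed is finite, of order $r+1$.
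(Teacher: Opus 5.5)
Your overall architecture matches the paper's: a Rothschild--Stein bootstrap to $C^{1,\beta}$, $x$-translation invariance for \eqref{eq:uniform-hypo-interior}, the derivative equation, and slice-wise Schauder estimates. Steps~1, 3 and~4 are fine. Passing to the limit in the difference-quotient equation, and interpolating between continuity into $C^0$ and the uniform $C^{2,\beta}$ bound, are harmless variants of the paper's chain-rule and Schauder-for-differences arguments.

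The gap is in Step~2, which is the real content of the lemma. Neither mechanism you describe produces a continuous $U_x$.
\begin{itemize}
\item \emph{Nonisotropic Schauder iteration.} Since $f\in C^{1,\delta}$, the composition $f(U)$ is capped at $\Gamma^{1+\delta}$, because the second horizontal derivatives $X_jX_kf(U)$ would involve $f''$. So the iteration stops at $U\in\Gamma^{3+\delta}$. But $\partial_x$ is only generated through brackets $\pm D^\zeta\alpha\,\partial_x$. Near a point $y_0$ with $\alpha(y_0)=\gamma$, where the first nonzero derivative of $\alpha$ has order $k$, the nonisotropic balls have $x$-extent of order $\rho^{k+2}$. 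The comparison between the two H\"older scales then only yields Euclidean $C^{(3+\delta)/(k+2)}$ regularity in $x$. This is below $C^1$ as soon as $\gamma$ is a critical value of $\alpha$.
\item \emph{$H^s$ difference quotients.} Passing from $D_hU\in H^{s-1}$ to $H^{s-1+\varepsilon}$ requires $L_\gamma D_hU=-c_hD_hU$ to be bounded in $H^{s-1}$, and later in $H^{s-1+k\varepsilon}$. But $c_h$ is only bounded, or at best H\"older with a small exponent since $f'$ is only $\delta$-H\"older. So it is not a multiplier on these spaces, and you cannot reach an index above $N/2$, which is what $U_x\in L^\infty_{loc}$ via Sobolev embedding would need.
\end{itemize}

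The missing ingredient is a fixed \emph{Euclidean} gain per step that uses only the Lipschitz continuity of $f$. This is what the paper uses.
\begin{itemize}
\item \cite[Theorem~18(c)]{RothschildStein1976} turns $L_\gamma U=-f(U)\in L^\infty$ into a Euclidean H\"older bound.
\item \cite[Theorem~18(b)]{RothschildStein1976} turns a Euclidean $C^{a}$ right-hand side into a Euclidean $C^{a+\varepsilon_0}$ solution, with $\varepsilon_0>0$ depending only on the bracket length.
\item Since $f(U)\in C^a$ whenever $U\in C^a$ with $a<1$, finitely many applications push the exponent above one. The hypothesis $f\in C^{1,\delta}$ plays no role here.
\end{itemize}

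If you want to keep your $x$-invariance idea, run it in the $L^\infty$ scale, where a merely bounded $c_h$ is harmless. Suppose $U$ is $a$-H\"older in $x$, and set $w_h:=|h|^{-a}\bigl(U(\cdot+h,\cdot)-U\bigr)$. Then $w_h$ is bounded uniformly in $h$ and solves $L_\gamma w_h=-c_hw_h\in L^\infty$. The $L^\infty\to C^{\varepsilon_0}$ estimate from your Step~1 then gives $|U(x+2h,y)-2U(x+h,y)+U(x,y)|\leq C|h|^{a+\varepsilon_0}$. The Zygmund characterization raises the $x$-exponent by $\varepsilon_0$ at each step. Once the exponent exceeds one, $D_hU$ is bounded, so $U_x$ is jointly H\"older, and the slice equation for $\Delta_yU$ supplies the transverse regularity.
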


\begin{proof}
	Under \eqref{cdn_alpha}, the vector fields
	$\partial_{y_i}$  $(i=1,\dots, N-1)$ and
	$(\gamma-\alpha(y))\partial_x$ satisfy the H\"ormander
	bracket condition.
	Since $0\leq U\leq1$ and $|f(U)|\leq\|f\|_{L^\infty([0,1])}$,
	the local estimates in
	\cite[Theorem~18(c)]{RothschildStein1976}
	first give a local H\"older bound for $U$.
	For exponents below one, the Lipschitz continuity of $f$
	gives the corresponding H\"older bound for $f(U)$.
	Thus, \cite[Theorem~18(b)]{RothschildStein1976}
	can be applied successively until the regularity exponent
	exceeds one.
	Applying these estimates on successively smaller cylinders,
	starting from $(-2,2)\times\T^{N-1}$, we obtain
	\begin{equation*}
			\|U\|_{C^{1,\beta}((-1,1)\times\T^{N-1})}\leq C
	\end{equation*}
	for some $\beta\in(0,\delta)$.
	The cylinders used in this finite iteration are fixed.
	Hence, $C$ depends only on $L_\gamma$, $\delta$,
	$\|f\|_{C^{1,\delta}([0,1])}$ and these cylinders,
	with the bound $0\leq U\leq1$ supplying the initial control.
	
	For any $x_0\in\R$, set $U^{x_0}(x,y):=U(x+x_0,y)$.
	Since the coefficients of $L_\gamma$ are independent of $x$,
	the function $U^{x_0}$ satisfies the same equation and still
	takes values in $[0,1]$.
	Therefore, the preceding local estimate applies to
	$U^{x_0}$ with the same constant $C$, giving
	\begin{equation*}
			\|U\|_{C^{1,\beta}((x_0-1,x_0+1)\times\T^{N-1})}
		=
		\|U^{x_0}\|_{C^{1,\beta}((-1,1)\times\T^{N-1})}
		\leq C.
	\end{equation*}
	Since $x_0$ is arbitrary, this proves
	$U\in C^{1,\beta}_{\mathrm{loc}}(\R\times\T^{N-1})$
	and \eqref{eq:uniform-hypo-interior}.
	
	Finally, the coefficients of $L_\gamma$ are independent of $x$,
	so differentiating \eqref{eq:semilinear-hypo} in distributions
	gives $L_\gamma U_x+\partial_x(f(U))=0$.
	Since $U\in C^1_{\mathrm{loc}}(\R\times\T^{N-1})$ and
	$f\in C^1([0,1])$, the chain rule gives
	$\partial_x(f(U))=f'(U)U_x$.
	This proves \eqref{eq:derivative-equation}.
	
	Set $H(x,y):=-(\gamma-\alpha(y))U_x(x,y)-f(U(x,y))$. Then, 
	$H\in C^{0,\beta}_{loc}$ and $\Delta_yU=H$ in $\D'(\R\times\T^{N-1})$. 
	For every $\phi\in C^\infty(\T^{N-1})$, the function
	$x\mapsto\int_{\T^{N-1}}(U\Delta_y\phi-H\phi)\,dy$ is continuous and
	vanishes as a distribution in $x$. Hence, it vanishes identically.
	Thus,  $\Delta_yU(x,\cdot)=H(x,\cdot)$ for every $x$, and 
	Schauder estimates give \eqref{eq:transverse-Schauder}.
	
	Finally, as
	$x'\to x$, one has $U(x',\cdot)\to U(x,\cdot)$ in $C^0$ and
	$H(x',\cdot)\to H(x,\cdot)$ in $C^{0,\beta'}$ for every
	$\beta'<\beta$. Applying the same Schauder estimate to the difference
	of the corresponding slice equations proves the last assertion.
\end{proof}

The following consequence of
\cite[Theorem~1.1]{HamelZlatosHarnack} provides the strong
maximum principle needed for longitudinal comparison.

\begin{proposition}\label{prop_SMP}
	Assume \eqref{cdn_alpha} and $
		\min_{\T^{N-1}}\alpha<\gamma
		<\max_{\T^{N-1}}\alpha$.
	For every $M>0$, 
	let $a\in L^\infty(\R\times\T^{N-1})$ satisfy
	$\|a\|_{L^\infty(\R\times\T^{N-1})}\leq M$, and let $z$ be a bounded 
	distributional solution of
	$$
	L_\gamma z+a(x,y)z=0
	\quad\text{in }\R\times\T^{N-1},
	$$
	satisfying $z\ge 0$ a.e. in $\R\times\T^{N-1}$. 
	Then, $z$ admits a continuous representative, still denoted by $z$, and either $z\equiv0$, or $z>0$ in $\R\times\T^{N-1}$ and there exists
	$C_H=C_H(\alpha,\gamma,M)\geq1$ such that 
	\begin{equation}
		C_H^{-1}z(x,y)
		\leq z(x+h,y')
		\leq C_Hz(x,y)
		\label{eq:harnack-comparison}
	\end{equation}
	for every $x\in\R$, $|h|\leq1$, and
	$y,y'\in\T^{N-1}$.
\end{proposition}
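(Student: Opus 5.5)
The plan is to reduce the statement to the Harnack inequality of \cite[Theorem~1.1]{HamelZlatosHarnack} for nonnegative solutions of hypoelliptic operators of the form $L_\gamma+a$. The structural features this requires are \eqref{cdn_alpha}, which gives the H\"ormander bracket condition, and the sign-changing drift $\gamma-\alpha(y)$, which is guaranteed by $\min\alpha<\gamma<\max\alpha$. The sign change matters because it lets information propagate in both longitudinal directions.

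First, I will establish continuity. Since $|az|\le M\|z\|_\infty$, the function $z$ solves $L_\gamma z=g$ with $g\in L^\infty$. The Rothschild--Stein local estimates used in Lemma~\ref{lem:hypo-regularity}, via \cite[Theorem~18(c)]{RothschildStein1976}, then give a H\"older continuous representative. In particular, $z$ is continuous and pointwise values make sense.

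Second, I will prove a Harnack inequality on a fixed reference cylinder. I plan to invoke \cite[Theorem~1.1]{HamelZlatosHarnack} on $Q=(-2,2)\times\T^{N-1}$. That theorem should state that for shear-type operators $\Delta_y+(\gamma-\alpha(y))\partial_x+a$ satisfying the H\"ormander condition, with $\gamma-\alpha$ taking both signs, nonnegative bounded solutions satisfy
\begin{equation*}
\sup_{[-1,1]\times\T^{N-1}} z\le C\inf_{[-1,1]\times\T^{N-1}} z ,
\end{equation*}
with $C$ depending only on $\alpha,\gamma$ and $\|a\|_\infty\le M$. I need the sign change here. If $\gamma-\alpha$ had one sign, the operator would be parabolic-like in $x$, and only a one-sided, time-lagged Harnack inequality would hold; the two-sided form in \eqref{eq:harnack-comparison} with $|h|\le1$ would then fail.

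Third, I will obtain uniformity and the strong maximum principle by translation invariance. Because the coefficients do not depend on $x$, the translate $z(\cdot+x_0,\cdot)$ solves an equation of the same type with coefficient $a(\cdot+x_0,\cdot)$, still bounded by $M$. The constant is therefore uniform in $x_0$. Taking $x_0=x$ and comparing points in $[x-1,x+1]\times\T^{N-1}$ gives \eqref{eq:harnack-comparison} with $C_H=C$. The dichotomy then follows. If $z(x_1,y_1)=0$ at some point, the inequality forces $z\equiv0$ on $[x_1-1,x_1+1]\times\T^{N-1}$. Iterating over overlapping unit steps propagates this vanishing to all of $\R\times\T^{N-1}$. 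Otherwise $z>0$ everywhere.

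The main obstacle is checking that the hypotheses of \cite[Theorem~1.1]{HamelZlatosHarnack} match exactly: the regularity required of $\alpha$ (smooth, which \eqref{cdn_alpha} provides), the class of zeroth-order coefficients ($L^\infty$), and the notion of solution (distributional and bounded). If that theorem is stated only for $a\equiv0$ or for smooth $a$, I would first try to absorb $a$. One option is to write $a=a^+-a^-$ and compare $z$ with sub- and supersolutions. Another is to approximate $a$ by smooth coefficients with the same bound $M$ and pass to the limit using the uniform H\"older estimates from the first step; since the Harnack constant depends only on $M$, it is stable under this limit.
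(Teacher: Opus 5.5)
Your proposal is correct and follows the paper's proof essentially step by step. Continuity comes from \cite[Theorem~18(c)]{RothschildStein1976}. The Harnack inequality of \cite[Theorem~1.1]{HamelZlatosHarnack} is applied on a fixed cylinder; the paper uses the periodic extension on $(x_0-3,x_0+3)\times(-2,3)^{N-1}$ and checks non-degeneracy via $D^\zeta(\gamma-\alpha)=-D^\zeta\alpha$. Uniformity in $x_0$ follows from $x$-translation invariance with $\|a\|_{L^\infty}\le M$, and the dichotomy follows by chaining overlapping cylinders. Your fallback arguments are unnecessary, since the paper uses the cited theorem with a bounded measurable zeroth-order coefficient that enters only through its $L^\infty$ bound.
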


\begin{proof}
We identify functions on $\T^{N-1}$ with their periodic extensions
to $\R^{N-1}$. The strict inequalities for $\gamma$ imply that
$\gamma-\alpha$ changes sign. Moreover,
$D^\zeta(\gamma-\alpha)=-D^\zeta\alpha$ for every multi-index
$\zeta$ with $|\zeta|\geq1$, so \eqref{cdn_alpha} implies that
$\gamma-\alpha$ satisfies the non-degeneracy hypothesis of
\cite[Theorem~1.1]{HamelZlatosHarnack}.
	Moreover, the local regularity
	result \cite[Theorem~18(c)]{RothschildStein1976} implies that $z$ admits
	a continuous representative, which is precisely the regularity argument
	used at the beginning of the proof of
	\cite[Theorem~1.1]{HamelZlatosHarnack}. We use this representative 
	throughout. Since $z\geq0$ almost everywhere, continuity gives
	$z\geq0$ everywhere.

	For each
	$x_0\in\R$, we apply
	\cite[Theorem~1.1]{HamelZlatosHarnack} on
	$(x_0-3,x_0+3)\times (-2,3)^{N-1}$, with inner cylinder
	$[x_0-2,x_0+2]\times (-1,2)^{N-1}$. It gives
	\begin{equation}
		\sup_{[x_0-2,x_0+2]\times (-1,2)^{N-1}}z
		\leq C_H
		\inf_{[x_0-2,x_0+2]\times (-1,2)^{N-1}}z.
		\label{eq:harnack-cylinder}
	\end{equation}
	The constant is independent of $x_0$, since the geometry is fixed,
	the coefficients are independent of $x$, and the zeroth-order
	coefficient enters the estimate only through its $L^\infty$ bound.
	
	Fix $x\in\R$, $|h|\leq1$, and $y,y'\in\T^{N-1}$, and choose
	representatives of $y,y'$ in $[0,1]^{N-1}\subset (-1,2)^{N-1}$. Taking
	$x_0=x$ in \eqref{eq:harnack-cylinder} and comparing the two points
	$(x,y)$ and $(x+h,y')$ gives the upper bound in
	\eqref{eq:harnack-comparison}. Interchanging the two points gives
	the lower bound.
	
	Finally, if $z$ vanishes at one point, then
	\eqref{eq:harnack-cylinder} shows that it vanishes on an inner
	cylinder containing that point. A chain of overlapping longitudinal
	translates of this cylinder then yields $z\equiv0$ on
	$\R\times\T^{N-1}$. 
\end{proof}

\subsection{Uniqueness of solutions of \eqref{eq:main-degenerate-front} under \eqref{cdn_alpha}}

\begin{lemma}
\label{lem_U and gamma}
Assume \eqref{cdn_alpha}.
Let $U$ be a full degenerate front with speed $\gamma$, and suppose
that $\alpha$ is nonconstant.  Then,  $0<U<1$ and $U_x<0$ on $\R\times\T^{N-1}$. Moreover,
\begin{equation*}
          \min_{\T^{N-1}}\alpha<\gamma<\max_{\T^{N-1}}\alpha.
\end{equation*}
\end{lemma}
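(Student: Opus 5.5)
By Lemma~\ref{lem:hypo-regularity}, $U\in C^{1,\beta}_{loc}$ with uniform bounds, $U_x$ solves the linearized equation \eqref{eq:derivative-equation}, and $U_x\le0$ because $U$ is nonincreasing in $x$ (it is a limit-type front; if monotonicity is not part of the hypothesis I would instead establish it by a sliding argument, see below). The plan is to prove the speed bounds first, since Proposition~\ref{prop_SMP} requires $\min\alpha<\gamma<\max\alpha$. After that, the strong maximum principle gives $0<U<1$ and $U_x<0$.

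\textbf{Speed bounds.} Suppose $\gamma\le\min\alpha$, so $\gamma-\alpha\le0$ and $\gamma-\alpha$ vanishes at most on a set of measure zero. The vanishing set has measure zero because \eqref{cdn_alpha} makes $\alpha$ non-flat at every point, so $\alpha\equiv$ const on an open set is impossible. I would integrate the equation over $(-R,R)\times\T^{N-1}$. The transverse Laplacian integrates to zero, and we get
\begin{equation*}
\int_{\T^{N-1}}(\gamma-\alpha(y))\bigl(U(R,y)-U(-R,y)\bigr)\,dy+\int_{(-R,R)\times\T^{N-1}}f(U)\,dx\,dy=0.
\end{equation*}
Letting $R\to\infty$, the first term tends to $\bar\alpha-\gamma$, provided we first show that $f(U)\in L^1$. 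That would follow from exponential decay of the tails. Near $x=+\infty$ we have $U\le\rho$ and $f(U)\le -cU$, so integrating over $(x,\infty)\times\T^{N-1}$ gives decay of $\int_{\T^{N-1}} U\,dy$, and similarly near $-\infty$. This identity alone does not give a sign, however.

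A cleaner route is to work on the far-right tail. Set $m(x)=\int_{\T^{N-1}}(\alpha-\gamma)U(x,y)\,dy\ge0$. Integrating the equation over $(x,\infty)\times\T^{N-1}$ gives $m(x)=-\int_x^\infty\!\int_{\T^{N-1}} f(U)\ge c\int_x^\infty\!\int_{\T^{N-1}} U$. Since $U$ is nonincreasing, this is at least $c\int_x^{x+L}\!\int_{\T^{N-1}} U\ge cL\int_{\T^{N-1}}U(x+L,y)\,dy$. On the other hand, $m(x)\le \|\alpha-\gamma\|_\infty\int_{\T^{N-1}} U(x,y)\,dy$, which by itself gives no contradiction.

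The right move is instead to use the left end. There $U\to1$, so $1-U$ is small and $f(U)\ge c(1-U)>0$. Integrating over $(-\infty,x)\times\T^{N-1}$ yields
\begin{equation*}
\int_{\T^{N-1}}(\gamma-\alpha)\bigl(U(x,y)-1\bigr)\,dy=-\int_{(-\infty,x)\times\T^{N-1}}f(U)<0.
\end{equation*}
The left side is $\ge0$ when $\gamma\le\alpha$, since both factors are $\le0$. This forces $\int_{(-\infty,x)\times\T^{N-1}}f(U)=0$, hence $U\equiv1$ on that half-cylinder for very negative $x$. Transverse unique continuation on slices, or the Hörmander-type propagation, then contradicts $U(+\infty)=0$. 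The case $\gamma\ge\max\alpha$ is symmetric, using the right end where $f(U)<0$.

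\textbf{Strict inequalities and the main obstacle.} With the speed bounds in hand, apply Proposition~\ref{prop_SMP} to $z=U$ with $a=f(U)/U$ (bounded since $f(0)=0$), and to $z=1-U$ with the analogous bounded coefficient. Since neither is identically zero, $0<U<1$. Then apply it to $z=-U_x\ge0$ with $a=f'(U)$ via \eqref{eq:derivative-equation}. Since $U_x\not\equiv0$ because of the end states, we get $U_x<0$. The main obstacle is the degenerate case in the bounds argument: justifying $f(U)\in L^1$ on half-cylinders via uniform tail convergence, and turning the equality case into a contradiction. If $U_x\le0$ is not a priori known, I would obtain it first by sliding $U(\cdot+h)$ against $U$ using the stability of $0$ and $1$ at the ends. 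That sliding needs the strong maximum principle, which in turn needs the speed bounds, so the bounds argument must be carried out first using only the integral identities above.
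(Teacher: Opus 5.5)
Your overall plan matches the paper's: speed bounds by integrating over a tail, then Proposition~\ref{prop_SMP} for $U$ and $1-U$, then a self-sliding argument for $U_x\le0$, then Proposition~\ref{prop_SMP} again for $-U_x$. The sliding step is mandatory, not optional, since monotonicity is not part of \eqref{eq:main-degenerate-front}.

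\textbf{The gap.} It is in the step you yourself flag. At an arbitrary very negative $x$ your identity gives only
$\int_{\T^{N-1}}(\gamma-\alpha)(U(x,y)-1)\,dy=-\int_{(-\infty,x)\times\T^{N-1}}f(U)\le0$, not $<0$. So under $\gamma\le\min\alpha$ you are left with $U\equiv1$ on a left half-cylinder, and neither tool you propose closes this.
\begin{itemize}
\item ``H\"ormander-type propagation'' here means Proposition~\ref{prop_SMP}. Its hypothesis $\min_{\T^{N-1}}\alpha<\gamma<\max_{\T^{N-1}}\alpha$ is exactly what you are trying to prove, so invoking it is circular. Indeed, when $\gamma\le\min\alpha$, the function $1-U$ solves $(\alpha-\gamma)(1-U)_x=\Delta_y(1-U)+a(x,y)(1-U)$ with $a$ bounded. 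This is a forward parabolic equation in the time variable $x$, and its strong maximum principle propagates zeros only toward smaller $x$.
\item Slice-wise unique continuation does not help either. The equation on $\{x\}\times\T^{N-1}$ contains the unknown $U_x$, so it carries no information across $x$.
\end{itemize}

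\textbf{The fix.} The missing step is elementary and uses only the continuity of $U$ from Lemma~\ref{lem:hypo-regularity}: choose the evaluation point instead of taking it arbitrary. The paper takes $R_*$ to be the leftmost point with $\max_y(1-U(R_*,y))=\rho$; it exists because this continuous function tends to $0$ at $-\infty$ and to $1$ at $+\infty$.
\begin{itemize}
\item On $(-\infty,R_*)\times\T^{N-1}$ one has $U\ge1-\rho$, hence $f(U)\ge0$.
\item Since $U(R_*,y_0)=1-\rho$ for some $y_0$, continuity gives $f(U)>0$ on an open subset of that half-cylinder.
\item Hence $\int_{\T^{N-1}}(\gamma-\alpha)(1-U(R_*,y))\,dy>0$, which is impossible if $\gamma\le\min\alpha$.
\end{itemize}
The case $\gamma\ge\max\alpha$ is symmetric, using the rightmost $\rho$-level point of $x\mapsto\max_yU(x,\cdot)$. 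An open--closed argument on $\sup\{x:U\equiv1\text{ on }(-\infty,x)\times\T^{N-1}\}$, or the weighted forward uniqueness from Step~3 of the proof of Theorem~\ref{thm:main}, would also work, but neither is needed.

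This argument uses neither monotonicity nor global integrability of $f(U)$, so your exploratory full-cylinder and $m(x)$ computations can be dropped.

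\textbf{On the sliding step.} Unlike Lemma~\ref{lem:order-speed}, the self-sliding has no speed-difference term to exclude touching at the critical shift $h_*>0$. You need Proposition~\ref{prop_SMP} to obtain $U\equiv U(\cdot-h_*,\cdot)$, and then iterate the shift to contradict the end states. This is what the paper does.
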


\begin{proof}
Choose $\rho\in(0,\min\{\theta,1-\theta\})$ so small that $f$ is strictly
decreasing on $[0,\rho]$ and on $[1-\rho,1]$. By Lemma~\ref{lem:hypo-regularity}, $U$ is continuous.  We first
establish the speed bounds.  The continuous function
$x\mapsto\max_yU(x,y)$ tends to $0$ and $1$ as $x\to \pm\infty$.  Its $\rho$-level set is therefore nonempty, closed, and bounded above.  
Let $R^*\in\R$ be
its rightmost point. Then,
\begin{equation}
	0\leq U\leq\rho\quad\hbox{on }[R^*,\infty)\times\T^{N-1},
	\qquad \max_yU(R^*,y)=\rho.
	\label{eq:last-right-section}
\end{equation}
Integrate the equation over $(R^*,S)\times\T^{N-1}$ and let $S\to\infty$.
Since $U(+\infty,\cdot)=0$ uniformly on $\T^{N-1}$,  it follows that
\begin{equation*}
	\int_{\T^{N-1}}(\gamma-\alpha(y))U(R^*,y)\,dy
	=\int_{R^*}^{\infty}\!\int_{\T^{N-1}}f(U(x,y))\,dy\,dx<0,
\end{equation*}
by \eqref{eq:last-right-section} and continuity.
This yields $\gamma<\max_{\T^{N-1}}\alpha$.  The opposite inequality follows
from the analogous argument at the left tail. Indeed, the continuous function $x\longmapsto
\max_{y\in\T^{N-1}}(1-U(x,y))$ tends to 0 as $x\to -\infty$ and has a leftmost $\rho$-level point $R_*\in\R$,  such that $0\le 1-U\le \rho$ on $(-\infty, R_*]\times\T^{N-1}$ and $\max_y(1-U(R_*,y))=\rho$.  Integration there
gives
\begin{equation*}
	\int_{\T^{N-1}}(\gamma-\alpha(y))(1-U(R_*,y))\,dy
	=\int_{-\infty}^{R_*}\!\int_{\T^{N-1}}f(U(x,y))\,dy\,dx>0.
\end{equation*}
Hence,
$\gamma>\min_{\T^{N-1}}\alpha$.  Consequently, 
$$
\min_{\T^{N-1}}\alpha<\gamma<\max_{\T^{N-1}}\alpha.
$$

We now apply Proposition~\ref{prop_SMP} to show that $0<U<1$ and $U_x<0$ on $\R\times\T^{N-1}$.
Since $f\in C^1([0,1])$ and $f(0)=f(1)=0$, the functions 
	\begin{equation*}
	\label{a01}
	a_0(s)=
	\begin{cases}
		\displaystyle	\frac{f(s)}{s},&s>0,\\[1.2ex]
		f'(0),&s=0,
	\end{cases}
	\qquad
	a_1(s)=
	\begin{cases}
		\displaystyle	-\frac{f(s)}{1-s},&s<1,\\[1.2ex]
		f'(1),&s=1.
	\end{cases}
\end{equation*}
are bounded and continuous. The equation for $U$ yields
$$
L_\gamma U+a_0(U)U=0,
\qquad
L_\gamma(1-U)+a_1(U)(1-U)=0.
$$
If $U$ vanished at some point, Proposition~\ref{prop_SMP} applied
to $U\geq0$ would give $U\equiv0$, contradicting
$U(-\infty,\cdot)=1$. If $U$ attained $1$, the same proposition
applied to $1-U\geq0$ would give $U\equiv1$, contradicting
$U(+\infty,\cdot)=0$. Therefore,  $0<U<1$.

Because of  $U(-\infty,\cdot)=1$ and $U(+\infty,\cdot)=0$ uniformly on $\T^{N-1}$, there exists  $M>1$ so that
\begin{equation*}
	U(x,y)\leq\rho/2\quad\hbox{for }x\geq M,\qquad
	U(x,y)\geq1-\rho/2\quad\hbox{for }x\leq-M.
\end{equation*}
For $h\geq4M$, define
\begin{equation*}
	\eps_h:=\inf\{\eps\geq0:
	U(x,y)-\eps\leq U(x-h,y)\hbox{ for all }(x,y)\in\R\times\T^{N-1}\}.
\end{equation*}
Repeating the arguments in the beginning of the proof of Lemma \ref{lem:order-speed},  one gets $\eps_h=0$ and thus
$U(x,y)\leq U(x-h,y)$ for all $h\geq4M$. Let
\begin{equation*}
	h_*:=\inf\{h\in\R:U(x,y)\leq U(x-h',y)
	\hbox{ for }h'\geq h\hbox{ and for all }(x,y)\in\R\times\T^{N-1}\}.
	\label{eq:critical-self-shift}
\end{equation*}
Then, $h_*>-\infty$.  Suppose $h_*>0$. The function $W:=U-U(\cdot-h_*,\cdot)\le 0$ on $\R\times\T^{N-1}$.

Assume that there exists  $(x_0,y_0)\in\R\times\T^{N-1}$ such that $W(x_0,y_0)=0$. Define
\begin{equation*}
	a(x,y)=\int_0^1 f'\big(U(x-h_*,y)+sW(x,y)\big)\,ds.
\end{equation*}
Then,  $L_\gamma W+aW=0$. The strong maximum principle in Proposition~\ref{prop_SMP} forces
$U(x,y)=U(x-h_*,y)$. Iteration would give for any $k\in\mathbb{N}$, $U(x,y)=U(x-kh_*,y)\to 1$ and  $U(x,y)=U(x+kh_*,y)\to 0$ as $k\to+\infty$. This is a contradiction. Therefore, 
\begin{equation}
	\label{no touching}
	U<U(\cdot-h_*,\cdot)~~~\text{on}~\R\times\T^{N-1}.
\end{equation}

According to the
definition of $h_*$, one can choose a sequence $(h_n)_{n\in\mathbb{N}}$  such that $h_*-1/n<h_n<h_*$ for all $n\in\mathbb{N}$ and thus $h_n\to h_*$ as $n\to+\infty$, and such that the functions $W_n:=U-U(\cdot-h_n,\cdot)$ have positive maxima.  It then follows from \eqref{no touching} that the corresponding
maximizing points $(x_n,y_n)$ must satisfy $|x_n|\to \infty$.  Arguing  as at the end of the proof of Lemma \ref{lem:order-speed}, one gets 
again a contradiction.  Hence, $h_*\leq0$, which proves $U_x\leq0$  on $\R\times\T^{N-1}$.
Moreover, \eqref{eq:derivative-equation} gives 
$L_\gamma U_x+f'(U)U_x=0$. The strong maximum principle in Proposition \ref{prop_SMP} together with $U(-\infty,\cdot)=1$ and $U(+\infty,\cdot)=0$ uniformly on $\T^{N-1}$ yields $U_x<0$ on $\R\times\T^{N-1}$.
This completes the proof.
\end{proof}

\begin{proposition} 
\label{prop:full-uniqueness}
Assume \eqref{cdn_alpha}.  If
$(\gamma_i,U_i)$, $i=1,2$, are two full degenerate fronts, then there exists $h\in\R$ such that
\begin{equation*}
 \gamma_1=\gamma_2,
 \qquad U_2(x,y)=U_1(x-h,y).
\label{eq:full-uniqueness}
\end{equation*}
\end{proposition}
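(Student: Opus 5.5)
The plan is a sliding argument in the longitudinal variable, run twice. The first run identifies the speeds and the second identifies the profiles up to translation.

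\textbf{Preliminaries.} Condition \eqref{cdn_alpha} forces $\alpha$ to be nonconstant: a constant shear has $D^\zeta\alpha\equiv0$ for all $|\zeta|\geq1$. So Lemma~\ref{lem_U and gamma} applies to both fronts. It gives
\begin{equation*}
0<U_i<1,\qquad U_{i,x}<0,\qquad \min_{\T^{N-1}}\alpha<\gamma_i<\max_{\T^{N-1}}\alpha .
\end{equation*}
By Lemma~\ref{lem:hypo-regularity}, each $U_i$ is $C^{1,\beta}_{\mathrm{loc}}$ and each slice $U_i(x,\cdot)$ is $C^{2,\beta}$. The equation therefore holds pointwise, with $\Delta_y$ understood slice by slice.

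This replaces ellipticity in $x$, which the operator lacks. At an interior maximum point $(x_0,y_0)$ of a difference $V$ of such functions we have $V_x(x_0,y_0)=0$, since $V$ is $C^1$. We also have $\Delta_yV(x_0,y_0)\leq0$, since $y_0$ maximizes the $C^2$ slice $V(x_0,\cdot)$. Hence $L_\gamma V(x_0,y_0)\leq0$ at every maximum point, exactly as in the proofs of Lemmas~\ref{lem:order-speed} and~\ref{lem_U and gamma}.

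\textbf{Step 1: $\gamma_1=\gamma_2$.} Suppose, say, $\gamma_1<\gamma_2$. Then $U_1$ satisfies
\begin{equation*}
L_{\gamma_2}U_1+f(U_1)=(\gamma_2-\gamma_1)U_{1,x}<0,
\end{equation*}
so $U_1$ is a strict supersolution of the $\gamma_2$-equation. I would repeat the proof of Lemma~\ref{lem:order-speed} verbatim, with $u_\alpha$ replaced by $U_2$, $u_\beta$ by $U_1$, and $d\equiv\gamma_2-\gamma_1>0$. The steps are as follows.
\begin{enumerate}
\item Fix $\rho$ and $M$ from the uniform tails, chosen so that $f$ is decreasing on $[0,\rho]$ and $[1-\rho,1]$. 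For $h\geq4M$, the $\eps_h$-argument gives $U_2\leq U_1(\cdot-h,\cdot)$. The maximum of $U_2-\eps_h-U_1(\cdot-h,\cdot)$ is attained because both functions converge uniformly at $\pm\infty$.
\item Define the critical shift $h_*>-\infty$. A touching point is excluded by the pointwise computation
\begin{equation*}
L_{\gamma_2}V=-(\gamma_2-\gamma_1)U_{1,x}>0,
\end{equation*}
which contradicts $L_{\gamma_2}V\leq0$ at a maximum.
\item Take shifts $h_n\uparrow h_*$. The positive maxima of $U_2-U_1(\cdot-h_n,\cdot)$ escape to $|x_n|\to\infty$, where both values lie in a stable zone. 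The monotonicity of $f$ there, together with $U_{1,x}<0$, again gives $L_{\gamma_2}V_n>0$ at the maximum, a contradiction.
\end{enumerate}
The symmetric assumption $\gamma_2<\gamma_1$ fails in the same way, so $\gamma_1=\gamma_2=:\gamma$.

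\textbf{Step 2: identification of the profiles.} Now $d=0$, so the strict inequality from the speed gap is gone. Sliding as before, the $\eps_h$-step still works for $h\geq4M$, because only the sign of the $f$-difference in the stable zones was used. This gives $U_2\leq U_1(\cdot-h,\cdot)$ for large $h$, and a finite critical shift $h_*$ with $U_2\leq U_1(\cdot-h_*,\cdot)$.

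Set $W:=U_1(\cdot-h_*,\cdot)-U_2\geq0$. It satisfies
\begin{equation*}
L_\gamma W+aW=0,\qquad a(x,y):=\int_0^1 f'\bigl(U_2+sW\bigr)\,ds,\qquad \|a\|_{L^\infty}\leq\|f'\|_{L^\infty([0,1])}.
\end{equation*}
Since $\min\alpha<\gamma<\max\alpha$, Proposition~\ref{prop_SMP} applies and yields the alternative: either $W\equiv0$, which is the desired conclusion $U_2=U_1(\cdot-h_*,\cdot)$, or $W>0$ everywhere.

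In the second case I argue as at the end of Lemma~\ref{lem_U and gamma}. Take $h_n\uparrow h_*$ with $\sup\bigl(U_2-U_1(\cdot-h_n,\cdot)\bigr)>0$. Because $W>0$ on every compact cylinder and $U_{1,x}$ is locally uniformly continuous, the maximum points satisfy $|x_n|\to\infty$. There both functions lie in $[0,\rho]$ or in $[1-\rho,1]$, where $f$ is strictly decreasing. So
\begin{equation*}
L_\gamma\bigl(U_2-U_1(\cdot-h_n,\cdot)\bigr)=f\bigl(U_1(\cdot-h_n,\cdot)\bigr)-f(U_2)>0
\end{equation*}
at the maximum point, a contradiction. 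This gives $U_2(x,y)=U_1(x-h,y)$ with $h=h_*$.

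\textbf{Expected difficulty.} The main technical point is justifying the pointwise maximum-principle inequality $L_\gamma V\leq0$ at maxima for this degenerate operator. This is handled by the $C^{1,\beta}$ regularity and the slice-wise $C^{2,\beta}$ estimate \eqref{eq:transverse-Schauder}. The second point is ensuring that suprema are attained or escape to infinity, which follows from the uniform limits at $\pm\infty$ in \eqref{eq:main-degenerate-front}.
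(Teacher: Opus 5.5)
Your proof is correct and uses the same ingredients as the paper. These are the pointwise inequality $L_\gamma V\le0$ at maxima (from the slicewise $C^{2,\beta}$ regularity), the $\eps_h$-sliding from large shifts, the critical shift $h_*$, Proposition~\ref{prop_SMP} at a touching point, and escape of the maxima to the stable tails otherwise. The only difference is organizational: the paper runs a single sliding argument under $\gamma_1\le\gamma_2$, where a touching point forces $\gamma_1=\gamma_2$ (as $-(\gamma_2-\gamma_1)U_{1,x}>0$ would contradict $\Delta_yW\le0$ there) and then $W\equiv0$, whereas you first exclude $\gamma_1\neq\gamma_2$ by repeating Lemma~\ref{lem:order-speed} and then slide a second time at equal speeds.
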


\begin{proof}
Assume without loss of generality that $\gamma_1\leq\gamma_2$.  	Choose $\rho>0$ so that $f$ is strictly decreasing on $[0,\rho]$ and
$[1-\rho,1]$. Because of  $U_i(-\infty,\cdot)=1$ and $U_i(+\infty,\cdot)=0$ uniformly on $\T^{N-1}$, there exists $M>1$ so that, for $i=1,2$,
$$
 U_i(x,y)\leq\rho/2\quad\hbox{for }x\geq M,\qquad
 U_i(x,y)\geq1-\rho/2\quad\hbox{for }x\leq-M.
$$
For $h\geq4M$, let
\begin{equation*}
 \eps_h:=\inf\{\eps\geq0:
 U_2(x,y)-\eps\leq U_1(x-h,y) \hbox{ for all }(x,y)\in\R\times\T^{N-1}\}.
\end{equation*}
Suppose, for contradiction, that  $\eps_h>0$.
Since
$U_2(x,\cdot)-U_1(x-h,\cdot)\to0$ uniformly in $\T^{N-1}$ as $x\to\pm\infty$, the function
$V_h:=U_2-\eps_h-U_1(\cdot-h,\cdot)$ attains its zero maximum at some point $(x_h,y_h)\in\R\times\T^{N-1}$. 	If $x_h\geq2M$, we have
$U_2(x_h,y_h)\le \rho/2$ and $U_1(x_h-h,y_h)=U_2(x_h,y_h)-\varepsilon_h<\rho/2$.  If $x_h<2M$, it follows that 
$x_h-h<-2M$ and  $U_1(x_h-h,y_h)\ge 1-\rho/2$. Therefore,
$$
U_2(x_h,y_h)
=
U_1(x_h-h,y_h)+\varepsilon_h
>
U_1(x_h-h,y_h)\ge  1-\rho/2.
$$
Consequently, we have
$
f(U_1(x_h-h,y_h))-f(U_2(x_h,y_h))>0$. Since $U_{i,x}<0$ on $\R\times\T^{N-1}$ by Lemma \ref{lem_U and gamma}, we have 
\begin{equation*}
L_{\gamma_2} V_h(x_h,y_h)
	=f(U_1(x_h-h,y_h))-f(U_2(x_h,y_h))
	-(\gamma_2-\gamma_1)U_{1,x}(x_h-h,y_h)>0.
\end{equation*}
This contradicts
$
L_{\gamma_2} V_h(x_h,y_h)\le0$,
which follows from the fact that $(x_h,y_h)$ is a maximum point of
$V_h$.  Hence, $\eps_h=0$, and for
all sufficiently large $h$, we have
\begin{equation*}\label{a}
                    U_2(x,y)\leq U_1(x-h,y) ~~~~\hbox{    for all }(x,y)\in\R\times\T^{N-1}.
\end{equation*}

Set
$$
 h_*:=\inf\{h\in\R:U_2(x,y)\leq U_1(x-h',y)
                \hbox{ for }h'\geq h\hbox{ and for all }(x,y)\in\R\times\T^{N-1}\}.
$$
Then, $h_*>-\infty$, and the function
$W=U_2-U_1(\cdot-h_*,\cdot)\le 0$ on $\R\times\T^{N-1}$. By defining
\begin{equation*}
 q(x,y)=
 \begin{cases}
 \displaystyle\frac{f(U_2)-f(U_1(\cdot-h_*))}
 {U_2-U_1(\cdot-h_*)},&~~\text{if}~~U_2\ne U_1(\cdot-h_*),\\[1.2ex]
 f'(U_2),&~~\text{if}~~U_2=U_1(\cdot-h_*),
 \end{cases}
\end{equation*}
we observe that the function $W$ satisfies
\begin{equation}\label{L-eqn}
 L_{\gamma_2}W+qW
 =-(\gamma_2-\gamma_1)U_{1,x}(\cdot-h_*,\cdot),
\end{equation}
where the right-hand side is nonnegative.    Suppose that $W$ attains its zero maximum at some
$(x_0,y_0)$. Then,  $W_x(x_0,y_0)=0$ and
$\Delta_yW(x_0,y_0)\leq0$. 
The strict inequality $U_{1,x}<0$ along with \eqref{L-eqn} shows that
$\gamma_2>\gamma_1$ would give a positive right-hand side at
$(x_0,y_0)$, contradicting
$$
\bigl( L_{\gamma_2}W+qW\bigr)(x_0,y_0)
=\Delta_yW(x_0,y_0)\leq0.
$$
Hence, $\gamma_1=\gamma_2$. Consequently, $-W\geq0$ solves
$L_{\gamma_2}(-W)+q(-W)=0$ and vanishes at
$(x_0,y_0)$. Proposition~\ref{prop_SMP} therefore yields
$W\equiv0$.

Suppose now that the function $W<0$ on $\R\times\T^{N-1}$.  One can then take a sequence $(h_n)_{n\in\mathbb{N}}$ such that 
$h_n<h_*$ for all $n\in\mathbb{N}$ and $h_n\to h_*$ as $n\to+\infty$, and such that the functions
$W_n=U_2-U_1(\cdot-h_n,\cdot)$ have positive maxima.  Moreover, $W<0$ on $\R\times\T^{N-1}$ and $W_n\to W$ as $n\to+\infty$ locally uniformly on $(x,y)\in\R\times\T^{N-1}$ implies that the maximizing points $(x_n,y_n)_{n\in\mathbb{N}}$ satisfy $|x_n|\to+\infty$ as $n\to+\infty$. After passing to a further subsequence,  either $x_n\to-\infty$  or $x_n\to+\infty$,
 so that $U_2(x_n,y_n)$ and $U_1(x_n-h_n,y_n)$ for $n$ large enough belong to an interval on which $f$ is
strictly decreasing.  Therefore,
\begin{equation*}
 L_{\gamma_2}W_n(x_n,y_n)
 =f(U_1(x_n-h_n,y_n))-f(U_2(x_n,y_n))
   -(\gamma_2-\gamma_1)U_{1,x}(x_n-h_n,y_n)>0,
\end{equation*}
which is impossible.  Therefore, the proof is complete.
\end{proof}

\section{Limiting profiles and exclusion of terraces}
\label{sec4}

In this section, we prove that the rescaled bistable transition remains a full
front for every nonconstant $\alpha\in C^{1,\delta}(\T^{N-1})$.
Along a sequence for which $\gamma_{A_n}\to\gamma$, local
compactness and the order of the transverse equilibria first
give either a full front or a two-front terrace for the limiting
degenerate equation. The latter consists of two monotone
distributional fronts connecting $1$ to an intermediate transverse
equilibrium $w$ and $w$ to $0$. Both fronts inherit the same speed
$\gamma$ from the original sequence. The instability of $w$ yields
incompatible sign conditions for this common speed and excludes
the terrace alternative. We begin by identifying the uniform
limits at $\pm\infty$ of monotone distributional profiles.

\begin{lemma}
\label{lem:longitudinal-limits}
Let $\gamma\in\R$, and let $U\in L^\infty(\R\times\T^{N-1})$
satisfy $\nabla_yU\in L^2_{\mathrm{loc}}(\R\times\T^{N-1})$ and
\begin{equation*}
	L_\gamma U+f(U)=0\quad\hbox{in }\D'(\R\times\T^{N-1}),
\end{equation*}
with $0\leq U\leq1$  a.e.  in $\R\times\T^{N-1}$ and
$U_x\leq0$ in $\D'(\R\times\T^{N-1})$.
Then,  there exist $p,q\in\mathcal E_f$ and a representative of $U$,
still denoted by $U$, such that $q(y)\leq U(x,y)\leq p(y)$ for all
$(x,y)\in\R\times\T^{N-1}$, with $U(-\infty,\cdot)=p$ and
$U(+\infty,\cdot)=q$ uniformly in $\T^{N-1}$. Moreover,
if $U$ depends on $x$, then $p\succ q$.
\end{lemma}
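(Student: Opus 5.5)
We will construct the limits by longitudinal averaging. Since $U_x\le0$ in $\D'$, for a.e.\ $y$ the function $x\mapsto U(x,y)$ coincides a.e.\ with a nonincreasing function. A cleaner route is the one-sided average: set, for $x\in\R$,
\begin{equation*}
	\tilde U(x,y):=\lim_{h\to0^+}\frac1h\int_x^{x+h}U(s,y)\,ds,
\end{equation*}
or simply $U^{(k)}(x,y):=\int_0^1U(x+k+s,y)\,ds$ for integer $k$. The first step is to show that $U$ has a representative that is nonincreasing in $x$ for every $y$. Then the monotone limits $p(y):=\lim_{x\to-\infty}U(x,y)$ and $q(y):=\lim_{x\to+\infty}U(x,y)$ exist pointwise, and $q\le U\le p$ follows.

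The second step identifies $p,q$ as transverse equilibria. Consider the translates $U_k:=U(\cdot+k,\cdot)$. They satisfy the same equation and are uniformly bounded, and by the local energy estimate (testing against $\eta^2U^h$ with $U^h$ the longitudinal mollification, as in Lemma~\ref{lem:local-strong-energy}) their $\nabla_y$ are bounded in $L^2_{\rm loc}$ uniformly in $k$. As $k\to\mp\infty$, we have $U_k\to p$ (resp.\ $q$) in $L^1_{\rm loc}$ by monotone convergence. Passing to the limit in the weak formulation gives
\begin{equation*}
	\Delta_yp+(\gamma-\alpha)\partial_xp+f(p)=0,
\end{equation*}
and since $p$ is independent of $x$ the middle term vanishes. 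Hence $\Delta_yp+f(p)=0$ in $\D'(\T^{N-1})$ with $0\le p\le1$, and elliptic regularity bootstrapping yields $p\in C^{2,\delta}$, so $p\in\mathcal E_f$. The same holds for $q$.

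The third step, which is the main obstacle, is upgrading pointwise/$L^1$ convergence to \emph{uniform} convergence in $y$ without continuity of $U$. Here we use transverse elliptic estimates on longitudinal averages. Define
\begin{equation*}
	W_k(y):=\int_k^{k+1}U(s,y)\,ds.
\end{equation*}
Integrating the equation against test functions of the form $\chi_{[k,k+1]}(x)\phi(y)$, justified by mollification in $x$, gives
\begin{equation*}
	\Delta_yW_k=-f\text{-average}+(\gamma-\alpha)\bigl(U(k,\cdot)-U(k+1,\cdot)\bigr),
\end{equation*}
with traces understood via the monotone representative. The right-hand side is bounded in $L^\infty$, so $W_k$ is bounded in $C^{1,\beta}(\T^{N-1})$. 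By monotonicity, $W_k\to p$ or $q$ pointwise, hence uniformly by Arzel\`a--Ascoli. Finally, monotonicity sandwiches $W_{k}(y)\le U(x,y)\le W_{k-1}(y)$ for $x\in[k,k+1]$, which yields $U(x,\cdot)\to p,q$ uniformly. Choosing the representative so that this sandwich holds for every $(x,y)$, for instance by taking the right-continuous version in $x$, requires some care.

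Last, if $p\equiv q$ then $q\le U\le p$ forces $U=p$, which is independent of $x$. Hence, if $U$ depends on $x$, we get $p\ge q$ and $p\not\equiv q$, i.e.\ $p\succ q$.
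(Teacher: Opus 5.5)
Your proposal is correct and follows essentially the paper's route: averaging in $x$ to get $x$-uniform transverse $W^{2,r}(\T^{N-1})$ bounds and hence equicontinuity of the slices, upgrading a.e.\ (or $L^1$) convergence of the averages to uniform convergence, and then concluding with a monotone sandwich and a redefinition of $U$ on a $y$-null set. The paper uses a smooth mollifier $Z=\rho*_xU$ instead of your box averages $W_k$, which sidesteps the trace terms since $\|Z_x\|_{L^\infty}\le\|\rho'\|_{L^1}$; your version also works, because only an $L^\infty$ bound on $\Delta_yW_k$ is needed, and the only slip is an index one: for $x\in[k,k+1]$ monotonicity gives $W_{k+1}(y)\le U(x,y)\le W_{k-1}(y)$, not $W_k(y)\le U(x,y)$, which does not affect the conclusion.
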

\begin{proof}
Since $U_x\leq0$ in $\D'(\R\times\T^{N-1})$, we may choose a
representative for which $x\mapsto U(x,y)$ is nonincreasing for
almost every $y\in\T^{N-1}$. For these $y$, the limits
$p(y):=\lim_{x\to-\infty}U(x,y)$ and $q(y):=\lim_{x\to+\infty}U(x,y)$
exist and satisfy $0\leq q(y)\leq p(y)\leq1$. In particular,
$q\leq U\leq p$  a.e. in $\R\times\T^{N-1}$.
For every $1\leq r<+\infty$, dominated convergence gives
$\lim_{x\to-\infty}\|U(x,\cdot)-p\|_{L^r(\T^{N-1})}=0$ and $\lim_{x\to+\infty}\|U(x,\cdot)-q\|_{L^r(\T^{N-1})}=0$
for this representative. 

We  show that
$p,q\in\mathcal E_f$. For $j\in\mathbb N$, set
$U_j^-(x,y):=U(x-j,y)$ and $U_j^+(x,y):=U(x+j,y)$.
By the definitions of $p,q$ and dominated convergence, we get
$U_j^-\to p$ and $U_j^+\to q$ in
$L^1_{\mathrm{loc}}(\R\times\T^{N-1})$ as $j\to+\infty$.
Both translates solve the same distributional equation as $U$,
because its coefficients are independent of $x$. Passing to the
limit in their weak formulations gives
$\Delta_yp+f(p)=0$ and $\Delta_yq+f(q)=0$ in $\D'(\T^{N-1})$.
For $1<r<+\infty$, the elliptic estimate below
\begin{equation}
	\|g\|_{W^{2,r}(\T^{N-1})}
	\leq C_r\bigl(\|\Delta_yg\|_{L^r(\T^{N-1})}
	+\|g\|_{L^r(\T^{N-1})}\bigr)
	\label{eq:section4-elliptic-estimate}
\end{equation}
first yields $p,q\in W^{2,r}(\T^{N-1})$ for every such $r$.
Sobolev embedding and Schauder estimates then give
$p,q\in C^2(\T^{N-1})$. Thus, $p,q\in\mathcal E_f$ and $p(y)\ge q(y)$ for all $y\in\T^{N-1}$.

We next prove that 
\begin{equation}
	\|U-p\|_{L^\infty((-\infty,-R)\times\T^{N-1})}
	+\|U-q\|_{L^\infty((R,+\infty)\times\T^{N-1})}\to0
	\quad\hbox{as }R\to+\infty.
	\label{eq:weak-uniform-limits}
\end{equation}
Choose
$\rho\in C_c^\infty((-1,1))$ with $\rho\geq0$ and
$\int_\R\rho(s)\,ds=1$, and define
$$Z(x,y):=\int_\R\rho(s)U(x-s,y)\,ds.$$
The equation for $U$ gives
\begin{equation*}
	\Delta_yZ+(\gamma-\alpha(y))Z_x+
	\int_\R\rho(s)f(U(x-s,y))\,ds=0
	\quad\hbox{in }\D'(\R\times\T^{N-1}).
\end{equation*}
Moreover, $\|Z\|_{L^\infty(\R\times\T^{N-1})}\leq1$ and
$\|Z_x\|_{L^\infty(\R\times\T^{N-1})}\leq\|\rho'\|_{L^1(\R)}$, so
\begin{equation*}
	\|\Delta_yZ\|_{L^\infty(\R\times\T^{N-1})}
	\leq\|\gamma-\alpha\|_{L^\infty(\T^{N-1})}\|\rho'\|_{L^1(\R)}
	+\|f\|_{L^\infty([0,1])}.
\end{equation*}
Convolution also makes $Z$, $Z_x$, and the averaged reaction term
continuous in $x$ with values in $L^\infty(\T^{N-1})$. Consequently,
the transverse equation holds on every section. Applying
\eqref{eq:section4-elliptic-estimate} with $r>N-1$, we obtain
$\sup_{x\in\R}\|Z(x,\cdot)\|_{W^{2,r}(\T^{N-1})}<+\infty$.
In particular, the functions $Z(x,\cdot)$ are uniformly
equicontinuous on $\T^{N-1}$.

We first verify $\lim_{x\to+\infty}\|Z(x,\cdot)-q\|_{L^1(\T^{N-1})}=0$.
By the  Tonelli's theorem,
\begin{align*}
	\|Z(x,\cdot)-q\|_{L^1(\mathbb T^{N-1})}&=\int_{\mathbb T^{N-1}}\Big|\int_{\mathbb R}
	\rho(s)U(x-s,y)-q(y)\,ds\Big|\,dy\\
	&\leq
	\int_{\mathbb T^{N-1}}\int_{\mathbb R}
	\rho(s)|U(x-s,y)-q(y)|\,ds\,dy\\
	&=
	\int_{\mathbb R}
	\rho(s)\|U(x-s,\cdot)-q\|_{L^1(\mathbb T^{N-1})}\,ds,
\end{align*}
in which we notice that the $L^1(\T^{N-1})$ convergence proved
in the first paragraph gives $\|U(x-s,\cdot)-q\|_{L^1(\T^{N-1})}\to0$
as $x\to+\infty$.
Moreover, since $0\leq U,q\leq1$ almost everywhere,
we have $
0\leq
\rho(s)\|U(x-s,\cdot)-q\|_{L^1(\mathbb T^{N-1})}
\leq
\rho(s)$,
and the function on the right belongs to $L^1(\mathbb R)$.
Applying the dominated convergence theorem in the above inequality gives
\begin{equation*}
	\lim_{x\to+\infty}
	\|Z(x,\cdot)-q\|_{L^1(\mathbb T^{N-1})}=0.
	\label{eq:Z-right-L1}
\end{equation*}

Next, we show that $$ \lim_{x\to+\infty}
\|Z(x,\cdot)-q\|_{L^\infty(\mathbb T^{N-1})}=0.$$
Suppose not. Then,  there exist $\eta>0$, a sequence
$x_j\to+\infty$, and points $y_j\in\mathbb T^{N-1}$ such that
$
|Z(x_j,y_j)-q(y_j)|\geq\eta$ for every $j$.
By the uniform equicontinuity of
$\{Z(x,\cdot):x\in\R\}$ and the uniform continuity of $q$,
there exists $a\in(0,1/4)$, independent of $j$, such that
\begin{equation*}
	|Z(x_j,y)-Z(x_j,y_j)|+|q(y)-q(y_j)|<\frac{\eta}{2}
	\quad\hbox{for }y\in B_a(y_j),
\end{equation*}
where $B_a(y_j)$ denotes the ball of radius $a$ centred at $y_j$
on the flat torus $\T^{N-1}$.
By the triangle inequality, we have 
\begin{align*}
	|Z(x_j,y)-q(y)|
	\geq |Z(x_j,y_j)-q(y_j)|
-|Z(x_j,y)-Z(x_j,y_j)|
	-|q(y)-q(y_j)|
	\geq\frac{\eta}{2},~y\in B_a(y_j).
\end{align*}
 Since translation preserves measure on $\T^{N-1}$,
\begin{equation*}
	\|Z(x_j,\cdot)-q\|_{L^1(\T^{N-1})}
	\geq\frac{\eta}{2}|B_a(y_j)|
	=\frac{\eta}{2}|B_a(0)|=\frac{\eta}{2}\omega_{N-1} a^{N-1}>0,
\end{equation*}
where $\omega_{N-1}$ is the volume of the unit ball in
$\mathbb R^{N-1}$.
This lower bound is independent of $j$, contradicting the
convergence in $L^1(\T^{N-1})$.
Therefore,
$
\lim_{x\to+\infty}
\|Z(x,\cdot)-q\|_{L^\infty(\mathbb T^{N-1})}=0$.
The same argument gives
$\lim_{x\to-\infty}\|Z(x,\cdot)-p\|_{L^\infty(\T^{N-1})}=0$.

Monotonicity and the support of $\rho$ imply
$U(x+1,y)\leq Z(x,y)\leq U(x-1,y) $ for a.e. $(x,y)\in\R\times\T^{N-1}$.
Therefore,
\begin{equation*}
	0\leq U(x,y)-q(y)\leq Z(x-1,y)-q(y),
	\qquad
	0\leq p(y)-U(x,y)\leq p(y)-Z(x+1,y)
\end{equation*}
almost everywhere. Therefore,
\begin{align*}
\|U-q\|_{L^\infty((R,+\infty)\times\T^{N-1})}
&\leq\sup_{x>R-1}\|Z(x,\cdot)-q\|_{L^\infty(\T^{N-1})},\\
\|U-p\|_{L^\infty((-\infty,-R)\times\T^{N-1})}
&\leq\sup_{x<-R+1}\|Z(x,\cdot)-p\|_{L^\infty(\T^{N-1})}.
\end{align*}
Since the right-hand sides above tend to zero as $R\to+\infty$, \eqref{eq:weak-uniform-limits} follows.

We now redefine $U$ on a set of measure zero and prove that
$U(-\infty,\cdot)=p$ and $U(+\infty,\cdot)=q$
uniformly in $\T^{N-1}$.
By Fubini's theorem, there is a null set $E\subset\T^{N-1}$
such that, for every $y\notin E$, the function $U(\cdot,y)$
is nonincreasing with limits $p(y),q(y)$, and
$Z(x+1,y)\leq U(x,y)\leq Z(x-1,y)$ for almost every $x\in\R$.
Fix such a $y$ and any $x\in\R$. For $s<x<t$ at which these
inequalities hold, monotonicity gives
\begin{equation*}
	Z(t+1,y)\leq U(t,y)\leq U(x,y)\leq U(s,y)\leq Z(s-1,y).
\end{equation*}
Letting $s$ and $t$ both tend to $x$, the continuity of $Z$
gives $Z(x+1,y)\leq U(x,y)\leq Z(x-1,y)$ for every $x\in\R$.
For $y\in E$, redefine $U(x,y)=p(y)$ when $x<0$ and
$U(x,y)=q(y)$ when $x\geq0$.
This modification preserves the distributional equation and
weak derivatives. Moreover, $q(y)\leq U(x,y)\leq p(y)$ for all
$(x,y)\in\R\times\T^{N-1}$, by monotonicity outside $E$
and by the chosen values on $E$.

Fix $R>0$. For $x>R+1$, the definition of $Z$ and the support
of $\rho$ give
\begin{equation*}
	|Z(x,y)-q(y)|
	\leq\int_\R\rho(s)|U(x-s,y)-q(y)|\,ds
	\leq\|U-q\|_{L^\infty((R,+\infty)\times\T^{N-1})}
\end{equation*}
for almost every $y$. By continuity, the inequality also holds for every $y$.
The corresponding estimate with $p$ holds when $x<-R-1$.
Combining these estimates with the comparison outside $E$
and the chosen values on $E$, we obtain
\begin{align*}
	\sup_{y\in\T^{N-1}}|U(x,y)-q(y)|
	&\leq\|U-q\|_{L^\infty((R,+\infty)\times\T^{N-1})},~~~~
	x>R+2,\\
	\sup_{y\in\T^{N-1}}|U(x,y)-p(y)|
	&\leq\|U-p\|_{L^\infty((-\infty,-R)\times\T^{N-1})},~~
	x<-R-2.
\end{align*}
By \eqref{eq:weak-uniform-limits}, both right-hand sides tend
to zero as $R\to+\infty$. Thus,  $U(-\infty,\cdot)=p$ and
$U(+\infty,\cdot)=q$ uniformly in $\T^{N-1}$.

Finally, if $p=q$, the inequalities $q\leq U\leq p$ give
$U(x,y)=p(y)$ almost everywhere. Thus,  $p\succ q$ whenever $U$
depends on $x$.
\end{proof}

\begin{lemma}
\label{lem:two sequences}
Let $A_n\to+\infty$, $\gamma_{A_n}\to\gamma$, and let
$(\xi_n)_{n\in\mathbb N}$ and $(\eta_n)_{n\in\mathbb N}$ be
two sequences in $\R$. Assume that
$U_{A_n}(\cdot+\xi_n,\cdot)\to U^1$ and
$U_{A_n}(\cdot+\eta_n,\cdot)\to U^2$ in
$L^1_{\mathrm{loc}}(\R\times\T^{N-1})$ as $n\to+\infty$.
For $i=1,2$, let $p_i,q_i\in\mathcal E_f$ be the limits of $U^i$
as $x\to-\infty$ and $x\to+\infty$, respectively, given by
Lemma~\ref{lem:longitudinal-limits}. Then,  the following assertions hold.
\begin{enumerate}
\item[(i)] If $\eta_n-\xi_n\to\ell\in\R$, then
$U^2(x,y)=U^1(x+\ell,y)$ for almost every
$(x,y)\in\R\times\T^{N-1}$.
\item[(ii)] If $\eta_n-\xi_n\to+\infty$, then
$U^2(x,y)\leq q_1(y)$ for almost every
$(x,y)\in\R\times\T^{N-1}$ and, in particular,
$p_2\leq q_1$ on $\T^{N-1}$.
If $\eta_n-\xi_n\to-\infty$, then
$U^2(x,y)\geq p_1(y)$ for almost every
$(x,y)\in\R\times\T^{N-1}$ and, in particular,
$q_2\geq p_1$ on $\T^{N-1}$.
\end{enumerate}
\end{lemma}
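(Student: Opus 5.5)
The plan relies only on the translation estimate \eqref{eq:revision-xtranslation}, the monotonicity $U_{A,x}<0$, and the representatives and limits supplied by Lemma~\ref{lem:longitudinal-limits}. Write $V_n^1:=U_{A_n}(\cdot+\xi_n,\cdot)$, $V_n^2:=U_{A_n}(\cdot+\eta_n,\cdot)$ and $\ell_n:=\eta_n-\xi_n$. Then
\begin{equation*}
V_n^2(x,y)=V_n^1(x+\ell_n,y).
\end{equation*}

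For (i), fix a compact cylinder $K=I\times\T^{N-1}$. By \eqref{eq:revision-xtranslation},
\begin{equation*}
\|V_n^1(\cdot+\ell_n,\cdot)-V_n^1(\cdot+\ell,\cdot)\|_{L^1(K)}\leq|\ell_n-\ell|\to0 .
\end{equation*}
Also $V_n^1(\cdot+\ell,\cdot)\to U^1(\cdot+\ell,\cdot)$ in $L^1(K)$, because $K+\ell e_1$ is again a compact cylinder and $V_n^1\to U^1$ in $L^1_{\mathrm{loc}}$. Combining the two gives $V_n^2\to U^1(\cdot+\ell,\cdot)$ in $L^1(K)$. Uniqueness of $L^1_{\mathrm{loc}}$ limits then yields $U^2=U^1(\cdot+\ell,\cdot)$ almost everywhere.

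For (ii), take the case $\ell_n\to+\infty$; the case $\ell_n\to-\infty$ is symmetric, with reversed inequalities and $p_1$ in place of $q_1$. Fix $L>0$. For $n$ large we have $\ell_n\geq L$, and since each $V_n^1$ is decreasing in $x$,
\begin{equation*}
V_n^2(x,y)=V_n^1(x+\ell_n,y)\leq V_n^1(x+L,y)\quad\text{for all }(x,y).
\end{equation*}
Integrate this pointwise inequality against a nonnegative test function supported in a compact cylinder and pass to the $L^1_{\mathrm{loc}}$ limits. This gives $U^2(x,y)\leq U^1(x+L,y)$ for almost every $(x,y)$, for each fixed $L$; taking $L\in\mathbb N$ keeps the union of exceptional null sets countable. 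Since Lemma~\ref{lem:longitudinal-limits} gives $U^1(x+L,\cdot)\to q_1$ uniformly as $L\to+\infty$ (for the chosen representative), letting $L\to+\infty$ yields $U^2\leq q_1$ almost everywhere.

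The inequality $p_2\leq q_1$ then follows from the first paragraph of the proof of Lemma~\ref{lem:longitudinal-limits}: $\|U^2(x,\cdot)-p_2\|_{L^1(\T^{N-1})}\to0$ as $x\to-\infty$, and $U^2(x,\cdot)\leq q_1$ almost everywhere for almost every $x$, so $p_2\leq q_1$ almost everywhere. Continuity of $p_2,q_1\in\mathcal E_f$ upgrades this to every $y$. The only delicate point is bookkeeping with representatives and null sets, which the countable choice of $L$ and the continuity of the equilibria handle. The lemma is essentially routine and has no major obstacle.
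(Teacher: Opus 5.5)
Your proposal is correct and follows the paper's proof essentially step by step. Part (i) uses the global $L^1$ translation bound: the paper integrates the fundamental theorem of calculus against \eqref{norm1} directly, which is equivalent to your use of \eqref{eq:revision-xtranslation}. Part (ii) uses monotonicity to get $U^2\le U^1(\cdot+j,\cdot)$ for integer shifts $j$, then the uniform tail limit from Lemma~\ref{lem:longitudinal-limits}, and finally $x\to-\infty$ together with continuity of the equilibria. The only cosmetic difference is how the passage $j\to+\infty$ is handled: the paper uses the estimate $\|(U^2-q_1)_+\|_{L^1((-R,R)\times\T^{N-1})}\le 2R\,\|U^1-q_1\|_{L^\infty((j-R,+\infty)\times\T^{N-1})}$, where you use a countable union of null sets.
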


\begin{proof}
Suppose first that $\eta_n-\xi_n\to\ell\in\R$.
By \eqref{norm1},
$\int_{\R\times\T^{N-1}}|U_{A_n,x}(x,y)|\,dx\,dy=1$.
Integrating the fundamental theorem of calculus between the two
translation parameters therefore gives
\begin{equation*}
	\|U_{A_n}(\cdot+\eta_n,\cdot)-U_{A_n}(\cdot+\xi_n+\ell,\cdot)
	\|_{L^1(\R\times\T^{N-1})}\leq|\eta_n-\xi_n-\ell|\to0.
\end{equation*}
Fix $R>0$. By the assumed local convergence, the two functions
on the left converge in $L^1((-R,R)\times\T^{N-1})$ to $U^2$
and $U^1(\cdot+\ell,\cdot)$, respectively. Letting $n\to+\infty$
thus gives
$\|U^2-U^1(\cdot+\ell,\cdot)\|_{L^1((-R,R)\times\T^{N-1})}=0$.
Since $R>0$ is arbitrary, this proves (i).

Suppose next that $\eta_n-\xi_n\to+\infty$.
Fix a positive integer $j$. For all sufficiently large $n$,
$\eta_n-\xi_n>j$, and monotonicity gives
\begin{equation*}
	U_{A_n}(x+\eta_n,y)\leq U_{A_n}(x+\xi_n+j,y)
	\quad\hbox{for }(x,y)\in\R\times\T^{N-1}.
\end{equation*}
The shift $j$ is fixed when $n\to+\infty$, so the two sides
converge in $L^1((-R,R)\times\T^{N-1})$ to $U^2(x,y)$ and
$U^1(x+j,y)$ for every $R>0$. Passing to an almost everywhere
convergent subsequence preserves the inequality, yielding
$U^2(x,y)\leq U^1(x+j,y)$ almost everywhere in
$\R\times\T^{N-1}$. To pass next to $j\to+\infty$, fix $R>0$
and use this comparison for $j>R$:
\begin{equation*}
	\|(U^2-q_1)_+\|_{L^1((-R,R)\times\T^{N-1})}
	\leq 2R\|U^1-q_1\|_{L^\infty((j-R,+\infty)\times\T^{N-1})}.
\end{equation*}
Here, $z_+$ denotes the positive part of $z$, and the factor $2R$
is the measure of $(-R,R)\times\T^{N-1}$.
The right-hand side tends to zero by
Lemma~\ref{lem:longitudinal-limits}. Hence, $U^2\leq q_1$
almost everywhere in $\R\times\T^{N-1}$.
Taking the limit of $U^2$ as $x\to-\infty$ gives
$p_2\leq q_1$ almost everywhere on $\T^{N-1}$, and then
everywhere by continuity of $p_2$ and $q_1$.

If $\eta_n-\xi_n\to-\infty$, by applying the preceding case with
the two sequences interchanged, we eventually obtain 
 $U^2\geq p_1$ a.e. on $\R\times\T^{N-1}$ and $q_2\geq p_1$ on $\T^{N-1}$.
This proves (ii).
\end{proof}

\begin{proposition}
\label{prop_alternative result}
Let $A_n\to+\infty$ and $\gamma_{A_n}\to\gamma$ as
$n\to+\infty$. After passing to a subsequence, still denoted by
$(A_n)_{n\in\mathbb N}$, exactly one of the following alternatives holds.
\begin{enumerate}
\item[(i)] There exist a sequence $(t_n)_{n\in\mathbb N}$ in $\R$
and a function $U\in L^\infty(\R\times\T^{N-1})$ solving
\eqref{eq:main-degenerate-front}, with
$U_x\leq0$ in $\D'(\R\times\T^{N-1})$ and
$\nabla_yU\in L^2_{\mathrm{loc}}(\R\times\T^{N-1})$, such that
\begin{equation*}
	U_{A_n}(\cdot+t_n,\cdot)\to U
	\quad\hbox{in }L^p_{\mathrm{loc}}(\R\times\T^{N-1})
\end{equation*}
for every $1\leq p<+\infty$, and the convergence also holds
almost everywhere in $\R\times\T^{N-1}$.

\item[(ii)] There exist $w\in\mathcal E_f\backslash\{0,1\}$,
sequences $(t_n^1)_{n\in\mathbb N},(t_n^2)_{n\in\mathbb N}$ in
$\R$ with $t_n^1<t_n^2$ and $t_n^2-t_n^1\to+\infty$, and
$x$-dependent functions $U^1,U^2\in L^\infty(\R\times\T^{N-1})$
such that, for $i=1,2$, $0\leq U^i\leq1$ almost everywhere in
$\R\times\T^{N-1}$, $\partial_xU^i\leq0$ in
$\D'(\R\times\T^{N-1})$,
$\nabla_yU^i\in L^2_{\mathrm{loc}}(\R\times\T^{N-1})$, and
\begin{equation*}
	L_\gamma U^i+f(U^i)=0\quad\hbox{in }\D'(\R\times\T^{N-1}),
\end{equation*}
with
\begin{equation}
U^1(-\infty,\cdot)=1,\quad U^1(+\infty,\cdot)=w,\quad
U^2(-\infty,\cdot)=w,\quad U^2(+\infty,\cdot)=0,
\label{eq:two}
\end{equation}
uniformly in $\T^{N-1}$, such that
\begin{equation*}
	U_{A_n}(\cdot+t_n^i,\cdot)\to U^i
	\quad\hbox{in }L^p_{\mathrm{loc}}(\R\times\T^{N-1}),\qquad i=1,2,
\end{equation*}
for every $1\leq p<+\infty$, and both convergences also hold
almost everywhere in $\R\times\T^{N-1}$.
For every sequence $(r_n)_{n\in\mathbb N}$ in $\R$ satisfying
\begin{equation}
r_n-t_n^1\to+\infty,\qquad t_n^2-r_n\to+\infty,
\label{eq:middle-center}
\end{equation}
one has $U_{A_n}(\cdot+r_n,\cdot)\to w$ in
$L^p_{\mathrm{loc}}(\R\times\T^{N-1})$ for every
$1\leq p<+\infty$, without extracting a further subsequence.
\end{enumerate}
\end{proposition}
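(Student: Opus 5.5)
The argument rests on a level-set normalization of the translations, combined with Lemma~\ref{lem:local-compactness}, Lemma~\ref{lem:longitudinal-limits}, Lemma~\ref{lem:two sequences} and the chain classification in Proposition~\ref{prop_Ef}(iii).

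\textbf{Choice of translations.} Since $x\mapsto\mathcal N(U_A(\cdot+x,\cdot))$ is continuous and strictly decreasing from $1$ to $0$, there is a unique $t_n$ with $\mathcal N(U_{A_n}(\cdot+t_n,\cdot))=1/2$. By Lemma~\ref{lem:local-compactness}, after extraction $U_{A_n}(\cdot+t_n,\cdot)\to U^0$ in $L^p_{\mathrm{loc}}$ and a.e. The limit $U^0$ is monotone, solves $L_\gamma U^0+f(U^0)=0$, and satisfies $\mathcal N(U^0)=1/2$. Lemma~\ref{lem:longitudinal-limits} then gives limits $p_0\succeq q_0$ in $\mathcal E_f$.

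\textbf{Recovering the outer states.} To see that the outer states are reached, pick a small $\varepsilon>0$ and define $s_n^-<t_n<s_n^+$ by $\mathcal N(U_{A_n}(\cdot+s_n^\pm,\cdot))=1-\varepsilon$ and $\varepsilon$ respectively. Extract limits $U^\pm$ of $U_{A_n}(\cdot+s_n^\pm,\cdot)$. Each limit is again monotone with some limits in $\mathcal E_f$, and the normalizations pass to the limit.

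Lemma~\ref{lem:two sequences} orders these profiles:
\begin{itemize}
\item if $s_n^+-t_n$ stays bounded, the two profiles are translates of each other;
\item if $s_n^+-t_n\to+\infty$, then $p(U^+)\le q_0$.
\end{itemize}
Collecting the distinct ``plateau'' values so obtained produces a strictly ordered chain in $\mathcal E_f$ starting at or above $p_0$ and ending at or below $q_0$. Sending $\varepsilon\to0$ along a diagonal subsequence shows that the chain connects $1$ to $0$. The key point is a uniform-in-$n$ tail argument: the extreme values of $\mathcal N$ tend to $1$ and $0$, and since every element of $\mathcal E_f$ lies between $0$ and $1$ in the strict order (part (i)), any terminal limit $q$ with $\int q<\varepsilon$ small must equal $0$. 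This uses the fact that $\mathcal E_f$ is isolated near $0$, which holds because $f'(0)<0$ and $\min q$ is small. The argument near $1$ is symmetric.

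\textbf{Classification.} Proposition~\ref{prop_Ef}(iii) says the chain is either $1\succ0$, $1\succ\theta\succ0$, or $1\succ w\succ0$.
\begin{itemize}
\item In the first case, every one of the limits is a full front, and $U^0$ is the front required in (i).
\item Otherwise there is exactly one intermediate $w$. Take $t_n^1$ and $t_n^2$ to be the translations whose limits are the upper piece (from $1$ to $w$) and the lower piece (from $w$ to $0$). Here $t_n^2-t_n^1\to\infty$, since by Lemma~\ref{lem:two sequences}(i) bounded separation would give translates, contradicting distinct end states.
\end{itemize}

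\textbf{Middle convergence.} For $r_n$ as in \eqref{eq:middle-center}, Lemma~\ref{lem:two sequences}(ii) applied along any subsequence gives a limit $W$ with $W\le w$ (from $r_n-t_n^1\to\infty$) and $W\ge w$ (from $t_n^2-r_n\to\infty$). Hence $W=w$, and since the limit is unique, the whole sequence converges without further extraction.

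\textbf{Exclusivity.} The two alternatives cannot both hold. A full front obtained along the same sequence would, by Lemma~\ref{lem:two sequences}, sit either at bounded distance from some $t_n^i$ or far from both. In each case its limits would be forced to be $w$ rather than $0$ or $1$, a contradiction.

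\textbf{Main obstacle.} The hardest step is to guarantee that the chain of limits actually reaches $1$ and $0$, that is, that no mass escapes between the chosen translation levels. The approach above handles this through the level-set normalizations together with the isolation of $0$ and $1$ in $\mathcal E_f$, which follows from their linear stability.
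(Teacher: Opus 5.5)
Your proof is correct in outline, but it takes a different route from the paper's. You fix the levels $1-\varepsilon$, $1/2$ and $\varepsilon$ of $\mathcal N$. You then use a uniform isolation of $0$ and $1$ in $\mathcal E_f$ to force two things: the limit $U^+$ at level $\varepsilon$ depends on $x$ and has right state $0$, and the limit $U^-$ at level $1-\varepsilon$ depends on $x$ and has left state $1$. Then $t_n^1=s_n^-$ and $t_n^2=s_n^+$ do the job.

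The paper instead takes two arbitrary levels $a>b$. It uses Proposition~\ref{prop_Ef}(iii) to show that at least one of the two limits depends on $x$, and to restrict its end states to $(1,0)$, $(1,w)$ or $(w,0)$. It then places the second translation at the level $\langle w\rangle/2$ or $(1+\langle w\rangle)/2$, chosen after $w$ is known. A constant limit at that level would be an equilibrium strictly between $w$ and $0$ (or between $1$ and $w$), contradicting (iii). This adaptive choice is what lets the paper avoid any quantitative isolation estimate. Your version is more symmetric and produces both outer components at once, but it needs that extra lemma.

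That lemma needs a proper proof, because Proposition~\ref{prop_Ef}(i) only gives $\min w>0$ for each $w\succ0$, with no bound uniform in $w$. What you need is a $\delta>0$ with $\langle w\rangle\geq\delta$ for every $w\in\mathcal E_f\setminus\{0\}$, and the symmetric statement at $1$.
\begin{itemize}
\item Write the equation as $\Delta_yw+a_0(w)w=0$ with $|a_0|\leq\|f'\|_{L^\infty([0,1])}$. The elliptic Harnack inequality on the torus gives $\max w\leq C\min w\leq C\langle w\rangle$.
\item If $C\langle w\rangle<\theta$, then $f(w)\leq0$ everywhere. Together with $\int_{\T^{N-1}}f(w)\,dy=0$, this forces $w\equiv0$.
\end{itemize}
With one fixed $\varepsilon<\delta$, the diagonal passage $\varepsilon\to0$ is unnecessary.

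Two further points should be tightened.
\begin{itemize}
\item The step of ``collecting plateau values'' should be written as a case analysis on whether $t_n-s_n^-$ and $s_n^+-t_n$ stay bounded. The only case that uses (iii) nontrivially is when both diverge. If $U^0$ then depends on $x$, one gets $1\succ q(U^-)\succ p(U^+)\succ0$, which (iii) excludes. If $U^0$ is constant there, it is itself the intermediate $w$.
\item In the exclusivity step, a limit taken far from both $t_n^i$ may be $1$, $w$ or $0$, not necessarily $w$. None of these is a full front, so your conclusion is unaffected.
\end{itemize}
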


\begin{proof}
	Let $A_n\to+\infty$ and $\gamma_{A_n}\to\gamma$ as $n\to+\infty$.
	For $v\in L^1(\T^{N-1})$, write
	$\langle v\rangle:=\int_{\T^{N-1}}v(y)\,dy$. Set
	$$
	m_n(x):=\int_0^1\big\langle U_{A_n}(x+s,\cdot)\big\rangle\,ds.
	$$
	Since $U_{A_n,x}<0$, while
	$U_{A_n}(-\infty,\cdot)=1$ and
	$U_{A_n}(+\infty,\cdot)=0$ uniformly in $\T^{N-1}$, the function $m_n:\R\to [0,1]$ is continuous and
	strictly decreasing. Fix $0<b<a<1$, and denote by
	$s_n(a)<s_n(b)$ the unique numbers satisfying
	$$
	m_n(s_n(a))=a,
	\qquad
	m_n(s_n(b))=b.
	$$
	
	After passing to a common subsequence,
	Lemma~\ref{lem:local-compactness} gives
	$$
	U_{A_n}(\cdot+s_n(a),\cdot)\to U^a,
	\qquad
	U_{A_n}(\cdot+s_n(b),\cdot)\to U^b
	$$
	in $L^p_{loc}(\R\times\T^{N-1})$ for every
	$1\leq p<+\infty$, and almost everywhere. By convergence in
	$L^1((0,1)\times\T^{N-1})$, we obtain
	$$
	\int_0^1\big\langle U^a(x,\cdot)\big\rangle\,dx=a,
	\qquad
	\int_0^1\big\langle U^b(x,\cdot)\big\rangle\,dx=b.
	$$

If both $U^a$ and $U^b$ were independent of $x$, they would
equal equilibria $w_a,w_b\in\mathcal E_f$ with averages $a,b$.
Since $s_n(a)<s_n(b)$, monotonicity gives $w_a\geq w_b$.
The inequalities $0<b<a<1$ would then imply
$1\succ w_a\succ w_b\succ0$, contrary to
Proposition~\ref{prop_Ef}~(iii).
Thus, at least one of $U^a,U^b$ depends on $x$.

	Choose $\kappa\in\{a,b\}$ such that $U^\kappa$ depends on $x$,
	and set for simplicity,
	$$
	U:=U^\kappa,\qquad t_n:=s_n(\kappa),\qquad
	m_n(t_n)=\int_0^1\langle U(x,\cdot)\rangle\,dx=\kappa.
	$$
	Lemma~\ref{lem:longitudinal-limits} implies that there exist $p,q\in\mathcal E_f$
	such that $p\succ q$, $U(-\infty,\cdot)=p$ and $U(+\infty,\cdot)=q$
	uniformly in $\T^{N-1}$. If $p\neq1$ and $q\neq0$, then
	$1\succ p\succ q\succ0,$
	contradicting Proposition~\ref{prop_Ef} (iii).
	Consequently, the ordered pair can only be
	$$
	(p,q)=(1,0),\qquad (p,q)=(1,w),
	\qquad\hbox{or}\qquad (p,q)=(w,0),
	$$
	for some $w\in\mathcal E_f\setminus\{0,1\}$ (therefore $0<\langle w\rangle<1$).
	
	\medskip
	
	\noindent
	{\it Case 1.}
	Suppose that $(p,q)=(1,0)$. This gives alternative (i). 
	Let $(\sigma_n)_{n\in\mathbb{N}}$ be another sequence in $\R$, and suppose that, after passing to a
	subsequence,
	$$
	U_{A_n}(\cdot+\sigma_n,\cdot)\to\widehat U
	\quad\hbox{in }L^1_{loc}(\R\times\T^{N-1})
	\quad\hbox{and almost everywhere}.
	$$
	After a further extraction, we get
	$
	\sigma_n-t_n\to\ell$ for some $\ell\in\R\cup\{-\infty,+\infty\}$.
	If $\ell\in\R$, Lemma~\ref{lem:two sequences} (i)
	gives
	$
	\widehat U(x,y)=U(x+\ell,y)$.
	Lemma~\ref{lem:two sequences}~(ii) implies that if $\ell=+\infty$,
	$\widehat U\leq q=0$, and  hence, $\widehat U\equiv0$;
	if $\ell=-\infty$, it follows that
	$\widehat U\geq p=1$, and hence, $\widehat U\equiv1$.
	Thus,  every $x$-dependent local limit is a translate of $U$, and
	no additional $x$-dependent transition component can
	occur. Thus, alternative~(ii) cannot occur simultaneously.

	\medskip
	
	\noindent
	{\it Case 2.}
	Suppose that $(p,q)=(1,w)$. Since
	$
	U(x,y)\geq w(y)$ for almost every $(x,y)\in\R\times\T^{N-1}$,
	one has $\kappa=\int_0^1\langle U(x,\cdot)\rangle\,dx\geq\langle w\rangle$.
	Let $\tau_n$ be the unique number satisfying
	$
	m_n(\tau_n)=\langle w\rangle/2.
	$
	Since $m_n$ is strictly decreasing,
	we have $
	\tau_n>t_n$.

	We claim that
	$
	\tau_n-t_n\to+\infty$.
	Otherwise, after extraction of a subsequence, $\tau_n-t_n\to\ell\in[0,+\infty)$.
	Lemma~\ref{lem:local-compactness}, applied to the translated sequence
	$U_{A_n}(\,\cdot+\tau_n,\cdot)$, yields a limit
	$\widehat U$, up to a subsequence. Lemma~\ref{lem:two sequences} (i) then yields
	$
	\widehat U(x,y)=U(x+\ell,y).
	$
	Moreover, convergence in $L^1((0,1)\times\T^{N-1})$ gives
	$
	\int_0^1\langle\widehat U(x,\cdot)\rangle\,dx
	=\langle w\rangle/2$.
	On the other hand, $
	U(x,y)\geq w(y)$ for almost every $(x,y)\in\R\times\T^{N-1}$ implies
	$
	\int_0^1\langle\widehat U(x,\cdot)\rangle\,dx
	=\int_0^1\langle U(x+\ell,\cdot)\rangle\,dx
	\geq\langle w\rangle,
	$
	which is a contradiction. Hence, $\tau_n-t_n\to+\infty$.

	After passing to a further subsequence, let $U^2$ be such that
	$$
	U_{A_n}(\cdot+\tau_n,\cdot)\to U^2
	\quad\hbox{in }L^p_{loc}(\R\times\T^{N-1})
	$$
	for every $1\leq p<+\infty$, and almost everywhere. Then, 
	$
	\int_0^1\langle U^2(x,\cdot)\rangle\,dx
	=\langle w\rangle/2$ and $
	U^2(x,y)\leq w(y)$ for almost every $(x,y)\in\R\times\T^{N-1}$,
	where the inequality follows from
	Lemma~\ref{lem:two sequences} (ii).
	If the function $U^2$ is independent of $x$, then
	$U^2(x,y)=v(y)$ for some $v\in\mathcal E_f$. Since
	$
	v\leq w$ and $
	0<\langle v\rangle
	=\langle w\rangle/2
	<\langle w\rangle$,
	one would have
	$1\succ w\succ v\succ0$,
	contrary to Proposition~\ref{prop_Ef} (iii). Thus, $U^2$ depends on $x$.
	It then follows from
	Lemma~\ref{lem:longitudinal-limits} that there exist
	$\widetilde p,\widetilde q\in\mathcal E_f$ such that $\widetilde p\succ\widetilde q$, $U^2(-\infty,\cdot)=\widetilde p$ and $U^2(+\infty,\cdot)=\widetilde q$,
	uniformly in $\T^{N-1}$. Since $U^2(x,y)\leq w(y)$ for almost every $(x,y)\in\R\times\T^{N-1}$,
	one has $\widetilde p\leq w$ on $\T^{N-1}$. If $\widetilde p\neq w$, then $1\succ w\succ\widetilde p\succ0$,
	contrary to Proposition~\ref{prop_Ef} (iii). Thus, 
	$\widetilde p=w$. If $\widetilde q\neq0$, then $1\succ w\succ\widetilde q\succ0$,
	which gives the same contradiction. Hence, $\widetilde q=0$. That is, $U^2(-\infty,\cdot)=w$ and $U^2(+\infty,\cdot)=0$,
	uniformly in $\T^{N-1}$.
	Setting
	$$
	U^1:=U,\qquad
	t_n^1:=t_n,\qquad
	t_n^2:=\tau_n,
	$$
	gives alternative~(ii), with
	$t_n^2-t_n^1\to+\infty$.

	\medskip
	
	\noindent
	{\it Case 3.}
	It remains to consider $(p,q)=(w,0)$. In this case $
	U(x,y)\leq w(y)$ for almost every $(x,y)\in\R\times\T^{N-1}$,
	and hence, $\kappa=\int_0^1\langle U(x,\cdot)\rangle\,dx\leq\langle w\rangle$.
	Let $\zeta_n$ be the unique number satisfying
	$$
	m_n(\zeta_n)=\frac{1+\langle w\rangle}{2}.
	$$
	Then,  $\zeta_n<t_n$.
	
	We claim that $t_n-\zeta_n\to+\infty.$
	Otherwise, after passing to a subsequence,
	$t_n-\zeta_n\to\ell\in[0,+\infty)$. The sequence
	$
	U_{A_n}(x+\zeta_n,y)$
	converges locally, up to a further subsequence, to a limit
	$\widehat U$ satisfying $\widehat U(x,y)=U(x-\ell,y)$ by
	Lemma~\ref{lem:two sequences} (i).
	Consequently, $\int_0^1\langle\widehat U(x,\cdot)\rangle\,dx
	\leq\langle w\rangle.$
	This contradicts
	$
	\int_0^1\langle\widehat U(x,\cdot)\rangle\,dx
	=(1+\langle w\rangle)/2
	>\langle w\rangle$.
	Therefore,  $t_n-\zeta_n\to+\infty$.
	
	After extraction of a subsequence, let $U^1$ be such that
	$$
	U_{A_n}(\cdot+\zeta_n,\cdot)\to U^1
	\quad\hbox{in }L^p_{loc}(\R\times\T^{N-1})
	$$
	for every $1\leq p<+\infty$, and almost everywhere. Then, 
	$
	\int_0^1\langle U^1(x,\cdot)\rangle\,dx
	=(1+\langle w\rangle)/2$ and $
	U^1(x,y)\geq w(y)$,
	where the inequality follows from
	Lemma~\ref{lem:two sequences} (ii).
	
	If
	the function $U^1$ is independent of $x$, then
	$U^1(x,y)=v(y)$ for some $v\in\mathcal E_f$, and it follows from
	$
	v\geq w$ and $
	\langle w\rangle
	<\langle v\rangle
	=(1+\langle w\rangle)/2<1$ that $1\succ v\succ w\succ0$,
	contrary to Proposition~\ref{prop_Ef} (iii). Therefore, $U^1$ depends on $x$. Lemma \ref{lem:longitudinal-limits} implies that there exist
	$\widetilde p,\widetilde q\in\mathcal E_f$ such that $\widetilde p\succ\widetilde q$, $U^1(-\infty,\cdot)=\widetilde{p}$ and $U^1(+\infty,\cdot)=\widetilde{q}$ uniformly in $\T^{N-1}$. Since $U^1(x,y)\ge w(y)$ for almost every $(x,y)\in\R\times\T^{N-1}$, we have
	$
	\widetilde q\geq w$.
	If $\widetilde q\neq w$, then
	$1\succ\widetilde q\succ w\succ0,$
	contrary to Proposition~\ref{prop_Ef} (iii). Hence, 
	$\widetilde q=w$. If $\widetilde p\neq1$, then
	$
	1\succ\widetilde p\succ w\succ0$,
	which is again impossible. Thus,  $\widetilde p=1$. Consequently,
	$U^1(-\infty,\cdot)=1$ and $U^1(+\infty,\cdot)=w$ uniformly in $\T^{N-1}$. By
	setting
	$$
	U^2:=U,\qquad
	t_n^1:=\zeta_n,\qquad
	t_n^2:=t_n,
	$$
	this again gives alternative~(ii), with
	$t_n^2-t_n^1\to+\infty$.

	\medskip

	Finally, let $(\sigma_n)\subset\R$, and suppose that
	$$
	U_{A_n}(\cdot+\sigma_n,\cdot)\to\widehat U
	\quad\hbox{ in }L^1_{loc}(\R\times\T^{N-1})\hbox{ and almost everywhere.}
	$$
	If $\sigma_n-t_n^i$ has a bounded subsequence for some $i\in\{1,2\}$, then,
	after extraction, Lemma~\ref{lem:two sequences} (i) shows
	that $\widehat U$ is a translate of $U^i$. It therefore remains
	to consider the case
	$$
	\min\{|\sigma_n-t_n^1|,|\sigma_n-t_n^2|\}\to+\infty.
	$$
	By Lemma~\ref{lem:longitudinal-limits}, there exists
	$\widehat q\in\mathcal E_f$ such that
	$
	\widehat U(+\infty,\cdot)=\widehat q$
	uniformly in $\T^{N-1}$. Since $\widehat U$ is nonincreasing in
	$x$, one has
	$
	\widehat U(x,y)\geq\widehat q(y)$
	for almost every $(x,y)\in\R\times\T^{N-1}$.

	After passing to a further subsequence, one of the following three
	cases occurs. If $t_n^1-\sigma_n\to+\infty$,
	Lemma~\ref{lem:two sequences} (ii) gives
	$
	U^1(x,y)\leq\widehat q(y)$.
	Letting $x\to-\infty$ yields $1\leq\widehat q$. Since
	$0\leq\widehat q\leq1$, it follows that $\widehat q\equiv1$, and hence, $\widehat U\equiv1$. Suppose next that
	$$
	\sigma_n-t_n^1\to+\infty,
	\qquad
	t_n^2-\sigma_n\to+\infty.
	$$
	Applying Lemma~\ref{lem:two sequences} (ii) gives
	$
	\widehat U\leq w$, and
	$
	U^2(x,y)\leq\widehat q(y)$.
	Letting $x\to-\infty$ yields $w\leq\widehat q$. Consequently,
	$
	w\ge \widehat U\geq\widehat q\geq w$, which implies $\widehat U\equiv w$.
	Finally, if $\sigma_n-t_n^2\to+\infty$,
	Lemma~\ref{lem:two sequences} (ii) gives
	$
	\widehat U\leq0$,
	and hence, $\widehat U\equiv0$. Thus, every $x$-dependent local limit is a translate of $U^1$ or
	$U^2$, and no additional $x$-dependent limiting profile can occur.

	Let $(r_n)_{n\in\mathbb N}$ satisfy
	\eqref{eq:middle-center}. Consider an arbitrary subsequence of
	$U_{A_n}(\cdot+r_n,\cdot)$. By
	Lemma~\ref{lem:local-compactness}, it has a further subsequence
	converging in $L^1_{loc}(\R\times\T^{N-1})$, and almost
	everywhere, to a function $Z$. Since
	$$
	r_n-t_n^1\to+\infty,
	\qquad
	r_n-t_n^2\to-\infty,
	$$
	Lemma~\ref{lem:two sequences} (ii), applied with
	$\xi_n=t_n^1$ and $\eta_n=r_n$, gives $Z\leq w$, whereas, applied with
	$\xi_n=t_n^2$ and $\eta_n=r_n$, gives $Z\geq w$.
	Hence, $Z=w$. Thus,  every subsequence has a further subsequence converging to $w$
	in $L^1_{loc}(\R\times\T^{N-1})$. Consequently,
	$$
	U_{A_n}(\cdot+r_n,\cdot)\to w
	\quad\hbox{in }L^1_{loc}(\R\times\T^{N-1}).
	$$
	Since $0\leq U_{A_n}\leq 1$ and $0\leq w\leq1$ on $\R\times\T^{N-1}$, for every compact set
	$K\subset\R\times\T^{N-1}$ and every $1\leq p<+\infty$,
	$$
	\|U_{A_n}(\cdot+r_n,\cdot)-w\|_{L^p(K)}^p
	\leq
	\|U_{A_n}(\cdot+r_n,\cdot)-w\|_{L^1(K)}.
	$$
	Thus,  the convergence holds in $L^p_{loc}(\R\times\T^{N-1})$ for every
	$1\leq p<+\infty$.
	
	\medskip
	
	In {\it Cases~2} and {\it 3}, every $x$-dependent translated limit is a translate of
	either $U^1$ or $U^2$; since neither connects $1$ directly to $0$,
	alternative~(i) cannot hold. Thus,  the two alternatives are mutually
	exclusive.
\end{proof}

It remains to exclude two connections $1\to w$ and $w\to0$ with
the same speed. The following lemma concerns nontrivial nonnegative solutions
that are nonincreasing in $x$ and converge to zero as
$x\to+\infty$. It shows that, if the principal eigenvalue of
$-\Delta_y-a_\infty(y)$ is negative, the existence of such a solution
requires the drift to satisfy a sign condition. We also point out that the weak solution in the lemma is allowed to vanish and is not
assumed to be continuous in $x$.

\begin{lemma}
\label{lem:tailsign}
Let $\gamma\in\R$ and $\alpha\in C^{1,\delta}(\T^{N-1})$
satisfy $\gamma<\max_{\T^{N-1}}\alpha$, and let
$a\in L^\infty(\R\times\T^{N-1})$. Suppose that
$v\in L^\infty(\R\times\T^{N-1})$ satisfies
$\nabla_yv\in L^2_{\mathrm{loc}}(\R\times\T^{N-1})$ and
\begin{equation}
\Delta_yv+(\gamma-\alpha(y))v_x+a(x,y)v=0\quad\hbox{in }\D'(\R\times\T^{N-1}),
\label{eq:v}
\end{equation}
with $v\geq0$ almost everywhere in $\R\times\T^{N-1}$ and
$v_x\leq0$ in $\D'(\R\times\T^{N-1})$.
Assume that there exist $v_-\in L^\infty(\T^{N-1})$ and
$a_\infty\in C^{0,\delta}(\T^{N-1})$, with
$\operatorname*{ess\,inf}_{\T^{N-1}}v_->0$, such that as $R\to +\infty$,
\begin{equation}
\begin{aligned}
\|v-v_-\|_{L^\infty((-\infty,-R)\times\T^{N-1})}
 +\|v\|_{L^\infty((R,+\infty)\times\T^{N-1})}\to0,~~~\|a-a_\infty\|_{L^\infty((R,+\infty)\times\T^{N-1})}\to0.
\end{aligned}
\label{eq:vend}
\end{equation}
Let $\lambda_1$ be the principal periodic eigenvalue of
$-\Delta_y-a_\infty(y)$, and let
$\varphi\in C^{2,\delta}(\T^{N-1})$ be the corresponding
positive eigenfunction normalized by $\int_{\T^{N-1}}\varphi(y)^2\,dy=1$.
If $\lambda_1<0$, then
\begin{equation*}
B:=\int_{\T^{N-1}}(\gamma-\alpha(y))\varphi(y)^2\,dy>0.
\label{eq:Bpositive}
\end{equation*}
\end{lemma}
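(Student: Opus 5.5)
The plan is to test the equation against $\varphi^2$ after a logarithmic change of variables, on the right tail where $v\to0$ and $a\to a_\infty$, and argue by contradiction from $B\le0$. Formally, with $g=\log v$, we have
\[
\Delta_y g+|\nabla_y g|^2+(\gamma-\alpha)g_x+a=0 .
\]
Multiplying by $\varphi^2$, integrating over $\T^{N-1}$, and using $\varphi^2|\nabla_yg|^2-2\varphi\nabla_y\varphi\cdot\nabla_yg\ge-|\nabla_y\varphi|^2$, we get
\[
\int_{\T^{N-1}}(\gamma-\alpha)\varphi^2g_x\,dy\le\int_{\T^{N-1}}|\nabla_y\varphi|^2\,dy-\int_{\T^{N-1}}a\varphi^2\,dy=\lambda_1+\int_{\T^{N-1}}(a_\infty-a)\varphi^2\,dy .
\]
Integrating in $x$ over $(R,R+L)$ with $R$ large gives
\[
\int_{\T^{N-1}}(\gamma-\alpha)\varphi^2\bigl[g(R+L,\cdot)-g(R,\cdot)\bigr]\,dy\le\frac{\lambda_1}{2}\,L .
\]
Since $v$ is nonincreasing in $x$, the bracket is $\le0$. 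If the bracket were independent of $y$ up to a bounded error, the left side would be $B\cdot(\text{nonpositive})-O(1)$. This is $\ge -O(1)$ when $B\le0$, which contradicts $\lambda_1L/2\to-\infty$ as $L\to+\infty$. Hence $B>0$.

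To make this rigorous, since $v$ may vanish and is not continuous in $x$, I would replace $v$ by the one-sided exponential convolution
\[
Z(x,y):=\int_{-\infty}^0 e^{s}\,v(x+s,y)\,ds .
\]
By monotonicity, $v(x+s,\cdot)\ge v(x,\cdot)$ for $s<0$, so $Z\ge v(x,\cdot)$ and $Z_x=v(x,\cdot)-Z\in[-Z,0]$. Moreover $Z>0$ everywhere, because the far-left values satisfy $v\approx v_-$ and $\operatorname{ess\,inf}v_->0$. Also $Z$ is continuous in $x$ with values in $W^{2,r}(\T^{N-1})$. On each section, $Z$ solves $\Delta_yZ=h$ with
\[
h=-(\gamma-\alpha)Z_x-(av)^\ast,\qquad |h|\le C Z,
\]
where $(av)^\ast$ denotes the same exponential convolution of $av$. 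The elliptic Harnack inequality on $\T^{N-1}$ for $\Delta_yZ+cZ=0$ with $\|c\|_\infty\le C$ then yields $\sup_yZ(x,\cdot)\le C_H\inf_yZ(x,\cdot)$ uniformly in $x$. Consequently $g:=\log Z(x,\cdot)$ has transverse oscillation at most $\log C_H$, which is exactly the bounded-error property needed above. The computation of the first paragraph, carried out for $g=\log Z$, is legitimate sectionwise because $Z$ is positive and of class $W^{2,r}$ in $y$. The effective zeroth-order coefficient becomes $\tilde a:=(av)^\ast/Z$ in place of $a$.

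The main obstacle is showing that $\tilde a$ is close to $a_\infty$ on the tail, i.e.\ that $\int_R^{R+L}\!\int_{\T^{N-1}}(a_\infty-\tilde a)\varphi^2\,dy\,dx\le |\lambda_1|L/2$ for suitable $R$ and all large $L$. The error is
\[
\frac{\bigl((a-a_\infty)v\bigr)^\ast}{Z} .
\]
The contribution from $s\in(-K,0)$ is small once $x-K$ lies in the region where $|a-a_\infty|$ is small. The contribution from $s<-K$ is not obviously $o(Z)$, since $v$ is larger to the left. I would first try replacing $e^{s}$ by $ke^{ks}$ with $k$ large. The Harnack constant then depends on $k$ but stays fixed, while the left contribution carries a weight $e^{-kK}$. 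I would then control it in $x$-average, using
\[
\int_R^{R+L}\frac{v(x-K',y)}{Z(x,y)}\,dx
\]
together with the monotonicity of $v$ and $Z_x=k(v-Z)$ (for the rate-$k$ kernel), so that $\log Z$ cannot drop too fast. If this averaged estimate works, the rest of the argument is as in the first paragraph, giving $B>0$. The hypothesis $\gamma<\max\alpha$ enters only as a consistency condition, since it is necessary for $B>0$.
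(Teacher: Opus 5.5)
\textbf{There is a genuine gap.} Your architecture matches the paper's: a positive exponential convolution in $x$, a transverse Harnack bound giving $\log Z(x,\cdot)$ bounded oscillation, the $\varphi^2$-weighted logarithmic identity, and a contradiction with $\lambda_1<0$ when $B\le0$. The gap is the step you flag as the main obstacle, and your proposed remedy cannot close it.

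\emph{Why the coefficient step fails.} For the rate-$k$ one-sided kernel, the part of $Z(x,y)$ coming from $t<R$ is exactly $e^{-k(x-R)}Z(R,y)$. Its fraction is $F(x,y)=e^{-k(x-R)}Z(R,y)/Z(x,y)$. The relation $Z_x=k(v-Z)$ only gives $(\log Z)_x\geq-k$, i.e.\ $F\le1$, which is useless. Averaging in $x$ does not help. Indeed $F=\exp\bigl(-k\int_R^x (v/Z)\,dt\bigr)$ is nonincreasing in $x$, so its averages tend to $\exp\bigl(-k\int_R^{\infty}(v/Z)\,dt\bigr)$. This limit is positive whenever $v$ decays faster than $e^{-kx}$, and enlarging $k$ does not help for super-exponentially decaying $v$.

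\emph{A counterexample showing this is not a technicality.} Take $\alpha\equiv\gamma$ and $v=\mathbf 1_{\{x<0\}}$. Let $a=0$ for $x<0$ and $a=a_\infty\equiv1$ for $x\geq0$. Every hypothesis holds except $\gamma<\max\alpha$, and $\lambda_1=-1<0$, yet $B=0$. Since $av\equiv0$, one has $\tilde a\equiv0$ for $x>0$ for every convolution kernel, and your identity yields no contradiction.

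So $\gamma<\max\alpha$ is not a consistency condition. It is not even necessary for $B>0$: if $\gamma\geq\max\alpha$, then $B\geq0$, with equality only when $\alpha\equiv\gamma$. Its real role is to exclude too-fast decay of $v$, and it must be used precisely in the coefficient step.

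\emph{The missing ingredient (Step 1 of the paper): an a priori exponential lower bound on the tail.}
\begin{enumerate}
\item Since $\gamma<\max\alpha$, there is a smooth domain $\Omega\subset\T^{N-1}$ with $\alpha-\gamma\geq\beta_0>0$ on $\overline\Omega$. On $\Omega$ the equation $(\alpha-\gamma)v_x=\Delta_yv+av$ is forward parabolic in $x$.
\item Let $\psi$ be the principal Dirichlet eigenfunction of $\Omega$, set $C=(\lambda_\Omega+\|a\|_\infty+1)/\beta_0$, and choose $x_0$ with $v\geq\eta$ on $(-\infty,x_0+1)\times\Omega$. Compare $v$ with the subsolution $\eta e^{-C(x-x_0)}\psi(y)$ in the weak sense (Steklov averages in $x$, testing with the positive part, Gronwall). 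This gives $v\geq\eta e^{-C(x-x_0)}\psi$ on $(x_0,+\infty)\times\Omega$.
\item Take the kernel rate $k>C$. Your transverse Harnack bound spreads this lower bound from $\Omega$ to all of $\T^{N-1}$, giving $Z(x,y)\geq c_ke^{-Cx}$.
\item The contribution of $t<R$ to $|\tilde a-a_\infty|$ is then $O\bigl(e^{kR}e^{-(k-C)x}\bigr)\to0$. Hence $\tilde a(x,\cdot)\to a_\infty$ uniformly as $x\to+\infty$, pointwise in $x$ rather than on average.
\end{enumerate}
With this in place, the rest of your argument goes through.
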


\begin{proof}
We divide into several steps. Steps~1-3 construct a positive longitudinal regularization whose
coefficient converges to $a_\infty$, and establish a uniform
transverse Harnack estimate \eqref{B}. In Steps~4-5, this estimate gives
a lower bound for the weighted logarithmic integral  \eqref{log integral} if $B\leq0$,
whereas the logarithmic identity \eqref{eq:logidentity} and $\lambda_1<0$ force this
integral to tend to $-\infty$ as $x\to+\infty$.
This contradiction proves $B>0$.

\medskip
\noindent
{\it Step 1. A lower bound on a transverse subdomain.}
Since $\gamma-\alpha$ is continuous and has a negative value,
there exist a smooth connected domain $\Omega\subset\T^{N-1}$ and $\beta_0>0$ such that
$\gamma-\alpha\leq-\beta_0$ on $\overline{\Omega}$.
Let $\lambda_\Omega>0$ and $\psi$ satisfy
\begin{align*}
	\begin{cases}
		-\Delta_y\psi=\lambda_\Omega\psi,
		&\text{in }\Omega,\\
		\psi=0,
		&\text{on }\partial\Omega,\\
		0<\psi\leq\max_{\overline{\Omega}}\psi=1,
		&\text{in }\Omega.
	\end{cases}
\end{align*}
Set $K:=\|a\|_{L^\infty(\R\times\T^{N-1})}$ and
$C:=(\lambda_\Omega+K+1)/\beta_0$.
Choose $\eta>0$ and $x_0\in\R$ sufficiently negative that
$v\geq\eta$ almost everywhere in $(-\infty,x_0+1)\times\Omega$.
This is possible by the left-hand limit in \eqref{eq:vend}
and the positive essential lower bound of $v_-$.
On $\overline{\Omega}$, equation \eqref{eq:v}
can be written as
$(\alpha(y)-\gamma)v_x-\Delta_yv-a(x,y)v=0$,
with $\alpha-\gamma$ bounded away from zero.

Define
$z_0(x,y):=\eta e^{-C(x-x_0)}\psi(y)$ for all $x\geq x_0$ and $y\in\overline{\Omega}$.
A direct calculation gives
\begin{equation*}
	(\Delta_y+(\gamma-\alpha(y))\partial_x+a(x,y))z_0
	=(-\lambda_\Omega-C(\gamma-\alpha(y))+a(x,y))z_0
	\geq z_0\geq0
\end{equation*}
almost everywhere in $(x_0,+\infty)\times\Omega$.
Since $z_0\leq\eta$ for all $x\geq x_0$ and $y\in\overline{\Omega}$, we have
$z_0\leq v$ almost everywhere in $(x_0,x_0+1)\times\Omega$.
Moreover, $z_0=0\leq v$ on the lateral boundary in the
Sobolev trace sense.

For $0<h<1$, define the forward average
\begin{equation*}
	w_h(x,y):=\frac1h\int_x^{x+h}(z_0-v)(s,y)\,ds.
\end{equation*}
Averaging the inequality for $z_0-v$ gives
\begin{equation*}
	(\alpha(y)-\gamma)\partial_xw_h-\Delta_yw_h
	\leq\frac1h\int_x^{x+h}a(s,y)(z_0-v)(s,y)\,ds
\end{equation*}
in distributions on $(x_0,+\infty)\times\Omega$. 
The comparison on the initial interval gives
$w_h(x_0,\cdot)\leq0$, while the lateral boundary inequality
gives $(w_h)_+(x,\cdot)\in H_0^1(\Omega)$ for almost every $x$.
Furthermore,
$\partial_xw_h(x,y)=
((z_0-v)(x+h,y)-(z_0-v)(x,y))/h$
belongs to $L^2((x_0,X)\times\Omega)$ for every finite $X>x_0$.
We may therefore test the averaged inequality by $(w_h)_+$
and integrate from $x_0$ to $X$.
The initial energy is zero because $(w_h)_+(x_0,\cdot)=0$.

The averages of $z_0-v$ and $a(z_0-v)$ converge to the
corresponding functions in $L^2((x_0,X)\times\Omega)$ as
$h\to0$. Moreover, the Lebesgue differentiation theorem for
$L^2(\Omega)$-valued functions gives
\begin{equation*}
	\|w_h(X,\cdot)-(z_0-v)(X,\cdot)\|_{L^2(\Omega)}\to0
	\quad\hbox{as }h\to0
\end{equation*}
for almost every $X>x_0$. Since the positive-part map is
Lipschitz continuous and $\alpha-\gamma$ is bounded, the
terminal weighted energies converge at these sections.
The local transverse energy bounds give weak compactness of
$\nabla_y(w_h)_+$, and strong $L^2$ convergence identifies its
weak limit as $\nabla_y(z_0-v)_+$. Passing to the limit in the
integrated inequality and using weak lower semicontinuity of
the gradient term, we obtain, for almost every $X>x_0$,
\begin{align*}
	&\frac12\int_\Omega(\alpha(y)-\gamma)
	(z_0-v)_+^2(X,y)\,dy
	+\int_{x_0}^{X}\int_\Omega
	|\nabla_y(z_0-v)_+|^2\,dy\,dx\\
	&\qquad\leq K\int_{x_0}^{X}\int_\Omega
	(z_0-v)_+^2\,dy\,dx
	\leq\frac{K}{\beta_0}\int_{x_0}^{X}\int_\Omega
	(\alpha(y)-\gamma)(z_0-v)_+^2\,dy\,dx.
\end{align*}
The integral form of Gronwall's inequality, applied on each
finite interval, gives $(z_0-v)_+=0$ almost everywhere.
Consequently,
\begin{equation}
	v(x,y)\geq\eta e^{-C(x-x_0)}\psi(y)
	\quad\hbox{for a.e. }(x,y)\in(x_0,+\infty)\times\Omega.
	\label{eq:patchlower}
\end{equation}

\medskip
\noindent
{\it Step 2. Longitudinal regularization.}
Fix $k>C$ and set $\rho_k(s):=(k/2)e^{-k|s|}$.
Then,  $\rho_k\in W^{1,1}(\R)$, $\rho_k>0$,
$\int_\R\rho_k(s)\,ds=1$, and $|\rho_k'|=k\rho_k$ almost everywhere.
Define
\begin{equation}
	z(x,y):=\int_\R\rho_k(x-s)v(s,y)\,ds,\qquad
	q(x,y):=\int_\R\rho_k(x-s)a(s,y)v(s,y)\,ds.
	\label{eq:expconv}
\end{equation}
These integrals are well defined for every $x\in\R$ and almost
every $y\in\T^{N-1}$, since $\rho_k\in L^1(\R)$ and
$v,av\in L^\infty(\R\times\T^{N-1})$.
Differentiating $z(x,y)$ with respect to $x$ in the distributional sense
gives $z_x(x,y)=\int_\R\rho_k'(x-s)v(s,y)\,ds$.
Since $v\geq0$, $|a|\leq K$, and $|\rho_k'|=k\rho_k$, we obtain
\begin{equation*}
	0\leq z\leq\|v\|_{L^\infty(\R\times\T^{N-1})},
	\qquad |z_x|\leq kz,\qquad |q|\leq Kz,
\end{equation*}
almost everywhere in $\R\times\T^{N-1}$.
The assumption $v_x\leq0$ also gives $z_x\leq0$.

To justify convolution of \eqref{eq:v} with $\rho_k$, choose
$\rho_{k,j}\in C_c^\infty(\R)$ such that
$\|\rho_{k,j}-\rho_k\|_{W^{1,1}(\R)}\to0$ as $j\to+\infty$.
Let $z_j$ and $q_j$ be defined by \eqref{eq:expconv} with
$\rho_k$ replaced by $\rho_{k,j}$.
For any $\chi\in C_c^\infty(\R\times\T^{N-1})$, the function
$\chi_j(s,y):=\int_\R\rho_{k,j}(x-s)\chi(x,y)\,dx$
belongs to $C_c^\infty(\R\times\T^{N-1})$.
Testing \eqref{eq:v} with $\chi_j$ and applying Fubini's theorem
gives
\begin{equation*}
	\int_{\R\times\T^{N-1}}
	\bigl(z_j\Delta_y\chi+(\gamma-\alpha(y))(z_j)_x\chi+q_j\chi\bigr)
	\,dx\,dy=0.
\end{equation*}

Since $v,av$ are bounded, the convolution estimates imply
\begin{align*}
	&\|z_j-z\|_{L^\infty(\R\times\T^{N-1})}
	+\|(z_j)_x-z_x\|_{L^\infty(\R\times\T^{N-1})}
	+\|q_j-q\|_{L^\infty(\R\times\T^{N-1})}\\
	&~~~~~~~~~~~~~~~~~~~~~~~~~\qquad\leq(1+K)\|v\|_{L^\infty(\R\times\T^{N-1})}
	\|\rho_{k,j}-\rho_k\|_{W^{1,1}(\R)}\to0.
\end{align*}
Passing to the limit in the preceding weak formulation gives
\begin{equation}
	\Delta_yz+(\gamma-\alpha(y))z_x+q=0
	\quad\hbox{in }\D'(\R\times\T^{N-1}).
	\label{eq:z}
\end{equation}

We next show that $z$ has a positive lower bound on every finite
longitudinal interval, uniformly in $y$.
The first limit in \eqref{eq:vend} and
$\operatorname*{ess\,inf}_{\T^{N-1}}v_->0$ give constants
$c>0$ and $X_0\in\R$ such that $v\geq c$ almost everywhere in
$(-\infty,X_0)\times\T^{N-1}$.
Consequently,
\begin{equation*}
	z(x,y)\geq c\int_{-\infty}^{X_0}\rho_k(x-s)\,ds
	=c\int_{x-X_0}^{+\infty}\rho_k(s)\,ds
\end{equation*}
for every $x\in\R$ and almost every $y\in\T^{N-1}$.
The last integral is positive and nonincreasing in $x$. Thus,
\begin{equation*}
	\operatorname*{ess\,inf}_{[-R,R]\times\T^{N-1}}z
	\geq c\int_{R-X_0}^{+\infty}\rho_k(s)\,ds>0
	\qquad\hbox{for every }R>0.
\end{equation*}

We now establish the transverse regularity of $z$.
First, continuity of translations in $L^1(\R)$ gives
\begin{equation*}
	\|z(x+h,\cdot)-z(x,\cdot)\|_{L^\infty(\T^{N-1})}
	\leq
	\|\rho_k(\cdot+h)-\rho_k\|_{L^1(\R)}
	\|v\|_{L^\infty(\R\times\T^{N-1})}\to0
	\quad\hbox{as }h\to0.
\end{equation*}
The same argument applied to $\rho_k' *_x v$ and
$\rho_k *_x(av)$ shows that both $z_x$ and $q$ belong to $ C(\R;L^\infty(\T^{N-1}))$.
Moreover, since the difference quotients of $\rho_k$ converge
to $\rho_k'$ in $L^1(\R)$, the difference quotients of $z$
converge to $z_x$ in $L^\infty(\T^{N-1})$.
Hence, $z\in C^1(\R;L^\infty(\T^{N-1}))$.

Testing \eqref{eq:z} against smooth periodic functions of $y$
therefore gives an identity continuous in $x$, so that
\begin{equation*}
	\Delta_yz(x,\cdot)=-(\gamma-\alpha)z_x(x,\cdot)-q(x,\cdot)
	\quad\hbox{in }\D'(\T^{N-1})
\end{equation*}
for every $x\in\R$.
The bounds $|z_x|\leq kz$ and $|q|\leq Kz$ extend to every
section in $L^\infty(\T^{N-1})$ by this continuity.
Thus,  the right-hand side is bounded in $L^\infty(\T^{N-1})$
uniformly in $x$.
Applying \eqref{eq:section4-elliptic-estimate}, we obtain
\begin{equation*}
	\sup_{x\in\R}\|z(x,\cdot)\|_{W^{2,r}(\T^{N-1})}<+\infty
	\qquad\hbox{for every }1<r<+\infty.
\end{equation*}

For $r>N-1$, Sobolev embedding and the continuity in $x$
give a jointly continuous representative. The preceding
lower bound then yields
$\min_{[-R,R]\times\T^{N-1}}z>0$ for every $R>0$.
We use this representative below.
We may consequently define $a_k:=q/z$.
The estimate $|q|\leq Kz$ and \eqref{eq:z} give
\begin{equation}
	\Delta_yz+(\gamma-\alpha(y))z_x+a_k(x,y)z=0~~~\text{in}~\D'(\R\times\T^{N-1}),
	\label{eq:ak}
\end{equation}
with $\|a_k\|_{L^\infty(\R\times\T^{N-1})}\leq K$.

\medskip
\noindent
{\it Step 3. Convergence of the regularized coefficient.}
We first establish a lower bound for $z$, uniform in $y$, with
the exponent $C$ from Step~1.
For $x\geq x_0$, restricting the convolution in
\eqref{eq:expconv} to $s\in[x,x+1]$ and using
\eqref{eq:patchlower}, we obtain
\begin{equation}
	\label{A}
\begin{aligned}
	\int_{\T^{N-1}}z(x,y)\,dy
	&\geq\int_x^{x+1}\rho_k(x-s)
	\int_\Omega v(s,y)\,dy\,ds\\
	&\geq\frac{k\eta}{2}
	\left(\int_\Omega\psi(y)\,dy\right)e^{-C(x-x_0)}
	\int_0^1e^{-(k+C)t}\,dt.
\end{aligned}
\end{equation}

We next use the elliptic Harnack inequality to turn this
integral estimate into a pointwise lower bound.
Set $m:=1+k\|\gamma-\alpha\|_{L^\infty(\T^{N-1})}+K$.
By Step~2, $z(x,\cdot)>0$ and
$z(x,\cdot)\in W^{2,r}(\T^{N-1})$ for every $x\in\R$
and $1<r<+\infty$.
For each fixed $x$, equation \eqref{eq:ak} can be written as
\begin{equation*}
	\Delta_yz(x,y)
	+\left((\gamma-\alpha(y))\frac{z_x(x,y)}{z(x,y)}
	+a_k(x,y)\right)z(x,y)=0
	\quad\hbox{in }\D'(\T^{N-1}).
\end{equation*}
The zero-order coefficient is bounded independently of $x$,
since
\begin{equation*}
	\left\|(\gamma-\alpha)\frac{z_x(x,\cdot)}{z(x,\cdot)}
	+a_k(x,\cdot)\right\|_{L^\infty(\T^{N-1})}
	\leq k\|\gamma-\alpha\|_{L^\infty(\T^{N-1})}+K=m-1.
\end{equation*}
The elliptic Harnack inequality, applied on a fixed finite
cover of the connected torus, therefore gives a constant
$g_m\in(0,1]$, depending only on $m$ and $N$, such that
\begin{equation}
	\label{B}
	\min_{\T^{N-1}}z(x,\cdot)
	\geq g_m\max_{\T^{N-1}}z(x,\cdot)
	\quad\hbox{for every }x\in\R.
\end{equation}
Since the torus has unit volume, it follows that
\begin{equation*}
	z(x,y)\geq g_m\int_{\T^{N-1}}z(x,\zeta)\,d\zeta
	\quad\hbox{for }(x,y)\in\R\times\T^{N-1}.
	\label{eq:resolventlower}
\end{equation*}
Combining this with 
\eqref{A}, we conclude that
\begin{equation}
	z(x,y)\geq c_ke^{-Cx}
	\quad\hbox{for }x\geq x_0,\ y\in\T^{N-1},
	\label{eq:globallower}
\end{equation}
for some constant
$c_k>0$.

We now use this lower bound to control $a_k-a_\infty$.
The last limit in \eqref{eq:vend} and the bound $|a|\leq K$
imply $\|a_\infty\|_{L^\infty(\T^{N-1})}\leq K$.
Given $\eta_1>0$, choose $R\geq x_0$ sufficiently large that
$\|a-a_\infty\|_{L^\infty((R,+\infty)\times\T^{N-1})}
\leq\eta_1$.
For every $x>R$ and almost every $y\in\T^{N-1}$,
the definitions of $q,z$ and a splitting at $s=R$ give
\begin{align*}
	|q(x,y)-a_\infty(y)z(x,y)|
	&=\left|\int_\R\rho_k(x-s)
	(a(s,y)-a_\infty(y))v(s,y)\,ds\right|\\
	&\leq\eta_1z(x,y)
	+2K\|v\|_{L^\infty(\R\times\T^{N-1})}
	\int_{-\infty}^{R}\rho_k(x-s)\,ds\\
	&=\eta_1z(x,y)
	+K\|v\|_{L^\infty(\R\times\T^{N-1})}e^{-k(x-R)},
\end{align*}
where  we have used $|a-a_\infty|\leq\eta_1$ on $s>R$ and
$|a-a_\infty|\leq2K$ on $s\leq R$.
Dividing by $z(x,y)$ and applying \eqref{eq:globallower}
therefore yields
\begin{equation*}
	\|a_k(x,\cdot)-a_\infty\|_{L^\infty(\T^{N-1})}
	\leq\eta_1+
	\frac{K\|v\|_{L^\infty(\R\times\T^{N-1})}}{c_k}
	e^{kR}e^{-(k-C)x}
	\quad\hbox{for }x>R.
\end{equation*}
Since $k>C$, the second term tends to zero as $x\to+\infty$
for this fixed $R$.
Taking the upper limit and then letting $\eta_1\to 0$
proves
\begin{equation}
	\lim_{x\to+\infty}
	\|a_k(x,\cdot)-a_\infty\|_{L^\infty(\T^{N-1})}=0.
	\label{eq:coefficientlimit}
\end{equation}

\medskip
\noindent
{\it Step 4. A weighted identity.}
Since $\min_{\T^{N-1}}\varphi>0$, we have
$
0<z(x,y)\leq M\varphi(y)$
for $(x,y)\in\R\times\T^{N-1}$, with $M:=\|v\|_{L^\infty(\R\times\T^{N-1})}/
\min_{\T^{N-1}}\varphi>0$.
Define 
\begin{equation*}
		\ell(x,y):=\log\frac{z(x,y)}{M\varphi(y)}.
\end{equation*}
Set
\begin{equation*}
	\begin{aligned}
		S(x):=\int_{\T^{N-1}}\varphi(y)^2\ell(x,y)\,dy,~~E(x):=\int_{\T^{N-1}}\varphi(y)^2
		|\nabla_y\ell(x,y)|^2\,dy,
	\end{aligned}
\end{equation*}
and 
\begin{equation}\label{log integral}
	T(x):=\int_{\T^{N-1}}(\gamma-\alpha(y))
	\varphi(y)^2\ell(x,y)\,dy.
\end{equation}
The choice of $M$ ensures that $\ell\leq0$ and hence, $S\leq0$.
The regularity and the positive lower bound established in Step~2
give, for every $R>0$ and $1<r<+\infty$,
\begin{equation*}
	\sup_{|x|\leq R}\|\ell(x,\cdot)\|_{W^{2,r}(\T^{N-1})}
	+\|\ell_x\|_{L^\infty((-R,R)\times\T^{N-1})}<+\infty.
\end{equation*}
In particular, the following transverse integrations by parts are
legitimate, and $S,T$ are locally absolutely continuous on $\R$.

Dividing \eqref{eq:ak} by $z$ and using
$z=M\varphi e^\ell$, we find
\begin{equation*}
	\frac{\Delta_yz}{z}
	=\frac{\Delta_y\varphi}{\varphi}
	+\Delta_y\ell+2\nabla_y\log\varphi\cdot\nabla_y\ell
	+|\nabla_y\ell|^2.
\end{equation*}
Since
\begin{equation*}
	\int_{\T^{N-1}}
	\bigl(\varphi^2\Delta_y\ell
	+2\varphi\nabla_y\varphi\cdot\nabla_y\ell\bigr)\,dy
	=\int_{\T^{N-1}}\operatorname{div}_y(\varphi^2\nabla_y\ell)\,dy=0,
\end{equation*}
it follows from $-\Delta_y\varphi-a_\infty(y)\varphi=\lambda_1\varphi$ with
 $\int_{\T^{N-1}}\varphi(y)^2\,dy=1$ that
\begin{equation}
T'(x)+E(x)-\lambda_1
 +\int_{\T^{N-1}}(a_k(x,y)-a_\infty(y))\varphi(y)^2\,dy=0
\label{eq:logidentity}
\end{equation}
for almost every $x\in\R$.
By \eqref{eq:coefficientlimit} and $\lambda_1<0$, there is
$X\in\R$ such that
$\|a_k(x,\cdot)-a_\infty\|_{L^\infty(\T^{N-1})} \leq-\frac{\lambda_1}{2}$ for $x\geq X$.
Consequently,
\begin{equation}
T'(x)\leq\frac{\lambda_1}{2}-E(x)
\quad\hbox{for a.e. }x\geq X.
\label{eq:Tderivative}
\end{equation}

\medskip
\noindent
{\it Step 5. The sign of $B$.}
Since $\int_{\T^{N-1}}\varphi^2\,dy=1$,
the definition of $S$ gives
$\min_{\T^{N-1}}\ell(x,\cdot)\leq S(x)
\leq\max_{\T^{N-1}}\ell(x,\cdot)$.
Using the definition of $\ell$, we therefore obtain
\begin{align*}
	\|\ell(x,\cdot)-S(x)\|_{L^\infty(\T^{N-1})}
	\leq \max_{\T^{N-1}}\ell(x,\cdot)
	-\min_{\T^{N-1}}\ell(x,\cdot)\leq \log\left(
	\frac{\max_{\T^{N-1}}\varphi}
	{g_m\min_{\T^{N-1}}\varphi}\right)
\end{align*}
for every $x\in\R$.

Since $\int_{\T^{N-1}}(\gamma-\alpha(y)-B)
\varphi(y)^2\,dy=0$, the definitions of $T$ and $S$,
together with the Cauchy-Schwarz inequality, yield
\begin{equation}
	\begin{aligned}
		|T(x)-BS(x)|
		=\left|\int_{\T^{N-1}}(\gamma-\alpha(y)-B)
		\varphi(y)^2(\ell(x,y)-S(x))\,dy\right|
		\leq C_\alpha,
	\end{aligned}
	\label{eq:weightedpoincare}
\end{equation}
where the constant
\begin{equation*}
	C_\alpha:=
	\|(\gamma-\alpha-B)\varphi\|_{L^2(\T^{N-1})}
	\log\left(
	\frac{\max_{\T^{N-1}}\varphi}
	{g_m\min_{\T^{N-1}}\varphi}\right)
\end{equation*}
is finite and independent of $x$.

Suppose, for contradiction, that $B\leq0$.
Since $S(x)\leq0$, we have $BS(x)\geq0$, and
\eqref{eq:weightedpoincare} gives $T(x)\geq-C_\alpha$
for every $x\in\R$.
On the other hand, $E(x)\geq0$ and
\eqref{eq:Tderivative} imply
$T'(x)\leq\lambda_1/2$ for almost every $x\geq X$.
As $T$ is locally absolutely continuous, integration gives
\begin{equation*}
	-C_\alpha\leq T(x)
	\leq T(X)+\frac{\lambda_1}{2}(x-X)
	\quad\hbox{for every }x\geq X.
\end{equation*}
Since $\lambda_1<0$, the right-hand side tends to $-\infty$
as $x\to+\infty$, contradicting the lower bound.
Therefore, $B>0$.
\end{proof}

Suppose $w\in\mathcal{E}_f\backslash\{0,1\}$, then
Lemma \ref{lem:tailsign}, applied to $U^1-w$ and, after reflection in $x$, to $w-U^2$,
gives incompatible conditions on the same weighted drift
integral whenever $\gamma-\alpha\not\equiv0$.
\begin{proposition}
	\label{prop:exclude-two-components}
	Suppose that 
	$\alpha$ is nonconstant on $\T^{N-1}$.
	Then,  alternative~(ii) of Proposition~\ref{prop_alternative result}
	cannot occur. 
\end{proposition}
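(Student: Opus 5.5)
Assume, for contradiction, that alternative~(ii) of Proposition~\ref{prop_alternative result} holds. Then we have $w\in\mathcal E_f\setminus\{0,1\}$ and two monotone distributional fronts $U^1$ (from $1$ to $w$) and $U^2$ (from $w$ to $0$), both with the same speed $\gamma$. The plan is to apply Lemma~\ref{lem:tailsign} twice, once to the right tail of $U^1$ and once (after reflection) to the left tail of $U^2$. Both applications involve the same principal eigenpair $(\lambda_1,\varphi)$ of $\mathcal A_w=-\Delta_y-f'(w)$. This eigenvalue is negative: for nonconstant $w$ this is Proposition~\ref{prop_Ef}~(ii), and for $w\equiv\theta$ one has $\lambda_1=-f'(\theta)<0$. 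The two applications will give $B>0$ and $-B>0$ for the same quantity $B=\int_{\T^{N-1}}(\gamma-\alpha)\varphi^2\,dy$, which is the desired contradiction.

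\emph{First application.} Set $v:=U^1-w\ge0$. Then $v_x\le0$, $v\to 1-w$ uniformly as $x\to-\infty$ (so $\operatorname{ess\,inf}v_->0$ by Proposition~\ref{prop_Ef}~(i)), and $v\to0$ uniformly as $x\to+\infty$. Moreover $\nabla_yv\in L^2_{\mathrm{loc}}$, and $v$ solves \eqref{eq:v} with
\[
a(x,y)=\int_0^1f'\bigl(w(y)+tv(x,y)\bigr)\,dt\in L^\infty .
\]
Since $v\to0$ uniformly, $a\to f'(w)=:a_\infty\in C^{0,\delta}$ uniformly on the right tail. The lemma also requires $\gamma<\max\alpha$. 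I would obtain this from the previous step, or from a tail integration as in Lemma~\ref{lem_U and gamma}. If that is awkward, I would argue that $\gamma\ge\max\alpha$ makes $\gamma-\alpha\ge0$, which forces $B\ge0$ in the second application, and then handle the degenerate case directly. Granting the hypothesis, the lemma gives $B>0$.

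\emph{Second application.} Set $\tilde v(x,y):=w(y)-U^2(-x,y)\ge0$. This is nonincreasing in $x$, tends to $w$ (with positive minimum) as $x\to-\infty$, and tends to $0$ as $x\to+\infty$. The reflection $x\mapsto-x$ changes the drift, so $\tilde v$ solves
\[
\Delta_y\tilde v+\bigl((-\gamma)-(-\alpha)\bigr)\tilde v_x+\tilde a\,\tilde v=0,
\]
with $\tilde a\to f'(w)$ on the right tail. This is Lemma~\ref{lem:tailsign} with $(\gamma,\alpha)$ replaced by $(-\gamma,-\alpha)$, whose hypothesis reads $-\gamma<\max(-\alpha)$, i.e.\ $\gamma>\min\alpha$. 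The conclusion is then $\int(-\gamma+\alpha)\varphi^2\,dy>0$, i.e.\ $B<0$. Since $\alpha\in C^{1,\delta}$, the shear $-\alpha$ is admissible in the lemma.

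\emph{Main obstacle.} The hard part is verifying the speed hypotheses $\min\alpha<\gamma<\max\alpha$, and this is where nonconstancy of $\alpha$ enters. I would try a contrapositive argument. If $\gamma\ge\max\alpha$, then the second application still yields $B<0$ provided the lemma's hypothesis holds there (it does, since $\gamma\ge\max\alpha>\min\alpha$ when $\alpha$ is nonconstant). But $\gamma-\alpha\ge0$ with $\varphi>0$ forces $B\ge0$, a contradiction. Symmetrically, $\gamma\le\min\alpha$ is ruled out by the first application, which gives $B>0$ while $\gamma-\alpha\le0$ forces $B\le0$. This uses that $\alpha$ is nonconstant so that $\min\alpha<\max\alpha$; for constant $\alpha$ the two hypotheses cannot both hold, which is consistent with the need to treat that case separately. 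Once $\min\alpha<\gamma<\max\alpha$, both applications are valid and yield $B>0$ and $B<0$ simultaneously, excluding alternative~(ii).
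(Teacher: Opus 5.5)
Your proposal is correct and follows the paper's proof essentially verbatim. The paper also applies Lemma~\ref{lem:tailsign} to $U^1-w$ and to the reflection of $w-U^2$, using the common principal eigenpair of $-\Delta_y-f'(w)$, which yields $B>0$ when $\gamma<\max\alpha$ and $B<0$ when $\gamma>\min\alpha$; your handling of the speed hypotheses, which rules out $\gamma\ge\max\alpha$ and $\gamma\le\min\alpha$ by comparing the applicable conclusion with the sign of $\gamma-\alpha$, is exactly the paper's three-case split, so the tentative alternatives in your first-application paragraph (such as a tail integration) are not needed.
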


\begin{proof}
Assume, for contradiction, that such $w\in\mathcal{E}_f\backslash\{0,1\}$,$U^1$ and $U^2$ exist.
Monotonicity and \eqref{eq:two}
give
$0\leq U^2(x,y)\leq w(y)\leq U^1(x,y)\leq1 $ for a.e. $(x,y)\in\R\times\T^{N-1}$.
Define $v_1:=U^1-w$ and $v_2:=w-U^2$, and set
\begin{equation*}
	a_1(x,y):=\int_0^1 f'(w(y)+t v_1(x,y))\,dt,\qquad
	a_2(x,y):=\int_0^1 f'(w(y)-t v_2(x,y))\,dt.
\end{equation*}
For $i=1,2$, we have
$v_i\in L^\infty(\R\times\T^{N-1})$,
$\nabla_yv_i\in L^2_{\mathrm{loc}}(\R\times\T^{N-1})$, and
\begin{equation*}
	\Delta_yv_i+(\gamma-\alpha(y))\partial_xv_i+a_i(x,y)v_i=0
	\quad\hbox{in }\D'(\R\times\T^{N-1}),
\end{equation*}
and
$\|a_i\|_{L^\infty(\R\times\T^{N-1})} \leq\|f'\|_{L^\infty([0,1])}$.
Moreover, $\partial_xv_1\leq0$ and $\partial_xv_2\geq0$ in
$\D'(\R\times\T^{N-1})$.

Recall that $\lambda_1(w)$ is the principal eigenvalue of
$-\Delta_y-f'(w)$. We claim that $\lambda_1(w)<0$. Indeed, if $w=\theta$,
this follows from $f'(\theta)>0$. If $w$ is nonconstant, 
 Proposition~\ref{prop_Ef}~(ii) applies. Also,
$\min_{\T^{N-1}}w>0$ and $\min_{\T^{N-1}}(1-w)>0$.
Let $\varphi>0$ be the corresponding eigenfunction, so that
$-\Delta_y\varphi-f'(w(y))\varphi=\lambda_1(w)\varphi$.
Normalize it by $\int_{\T^{N-1}}\varphi(y)^2\,dy=1$, and put $$B:=\int_{\T^{N-1}}(\gamma-\alpha(y))\varphi(y)^2\,dy.$$

Suppose that $\gamma<\max_{\T^{N-1}}\alpha$. We apply Lemma~\ref{lem:tailsign} to $v_1$. To be precise,  the  limits in \eqref{eq:two} imply
\begin{equation*}
	\|v_1-(1-w)\|_{L^\infty((-\infty,-R)\times\T^{N-1})}
	+\|v_1\|_{L^\infty((R,+\infty)\times\T^{N-1})}\to0
\end{equation*}
as $R\to+\infty$.
Since $f'$ is H\"older continuous on $[0,1]$, there is a
constant $C_f>0$ such that
\begin{equation*}
	\|a_1-f'(w)\|_{L^\infty((R,+\infty)\times\T^{N-1})}
	\leq C_f
	\|v_1\|_{L^\infty((R,+\infty)\times\T^{N-1})}^{\delta}\to0.
\end{equation*}
All assumptions of Lemma~\ref{lem:tailsign} hold with
$v_-=1-w$ and $a_\infty=f'(w)$. Therefore,
$B>0$.

For the second component, set
$\widetilde v_2(x,y):=v_2(-x,y)$ and
$\widetilde a_2(x,y):=a_2(-x,y)$. Then,
\begin{equation*}
	\Delta_y\widetilde v_2+(\alpha(y)-\gamma)\partial_x\widetilde v_2
	+\widetilde a_2\widetilde v_2=0
	\quad\hbox{in }\D'(\R\times\T^{N-1}).
\end{equation*}
Reflection preserves the boundedness and local transverse energy,
and $\partial_x\widetilde v_2\leq0$ in
$\D'(\R\times\T^{N-1})$.
The limits in \eqref{eq:two} give
$\widetilde v_2(-\infty,\cdot)=w$ and
$\widetilde v_2(+\infty,\cdot)=0$ uniformly in $\T^{N-1}$.
The same H\"older estimate as above gives
\begin{equation*}
	\|\widetilde a_2-f'(w)\|_{L^\infty((R,+\infty)\times\T^{N-1})}
	\leq C_f\|\widetilde v_2\|_{L^\infty((R,+\infty)\times\T^{N-1})}^{\delta}
	\to0.
\end{equation*}
Thus, if $\gamma>\min_{\T^{N-1}}\alpha$,
Lemma~\ref{lem:tailsign}, with $(-\gamma,-\alpha)$ in place
of $(\gamma,\alpha)$, gives $-B>0$.

If $\gamma-\alpha$ changes sign, these conclusions contradict one another.
If $\gamma-\alpha\leq0$ and $\gamma-\alpha\not\equiv0$, the first conclusion gives
$B>0$, whereas continuity of $\gamma-\alpha$ and positivity of $\varphi$
give $B<0$. If $\gamma-\alpha\geq0$ and $\gamma-\alpha\not\equiv0$, the second
conclusion gives $B<0$, whereas the same integral gives $B>0$.
Therefore, all these cases are impossible. This proves the proposition.
\end{proof}

\section{Global convergence and proof of Theorem \ref{thm:main}}
\label{sec5}

This section is devoted to the proof of Theorem~\ref{thm:main}.
We first treat constant shears, for which the fronts are explicit.
We then establish a global convergence lemma: uniform tail
estimates and integral identities allow us to strengthen local
convergence to a full front into convergence of the profiles
and their transverse gradients on the whole cylinder.
After proving convergence of $c_A/A$, we fix the normalization
and apply this lemma to the full limits obtained in
Section~\ref{sec4}.
We next prove the strict speed bounds provided that the sets where $\alpha$ attains its minimum
and maximum have zero Lebesgue measure.
Finally, under \eqref{cdn_alpha}, regularity and uniqueness
of the normalized limiting front upgrade subsequential
convergence to global and uniform convergence of the whole family.

\subsection{Constant shears}
\label{subsec:constant}
Suppose $\alpha\equiv\alpha_0$, and let $(c_0,q)$ be the
one-dimensional bistable front with $q'<0$, $q(-\infty)=1$
and $q(+\infty)=0$. For every $A>0$, uniqueness gives,
up to translation,
\begin{equation*}
	c_A=A\alpha_0+c_0,\qquad u_A(x,y)=q(x),\qquad U_A(x,y)=q(Ax).
\end{equation*}
In particular, $\gamma_A=\alpha_0+c_0/A\to\alpha_0$ as
$A\to+\infty$.

Set $H(x):=\mathbf1_{\{x<0\}}$.
The exponential convergence of $q$ at $\pm\infty$ gives
$q-H\in L^p(\R)$ for every $1\leq p<\infty$ and
$q'\in L^2(\R)$. A change of variables yields
\begin{equation*}
	\|q(A\cdot)-H\|_{L^p(\R)}^p
	=A^{-1}\|q-H\|_{L^p(\R)}^p.
\end{equation*}
Let $\tau_A$ be determined by
$\int_0^1q(A(x+\tau_A))\,dx=1/2$, and set
$V_A(x,y):=q(A(x+\tau_A))$.
The preceding estimate with $p=1$ gives
\begin{equation*}
	\left|\int_0^1H(x+\tau_A)\,dx-\frac12\right|
	\leq A^{-1}\|q-H\|_{L^1(\R)}<\frac{1}{2}
\end{equation*}
for all sufficiently large $A$. Thus, $0<\int_0^1H(x+\tau_A)\,dx<1$ for all sufficiently large $A$,
which implies that $\tau_A\in(-1,0)$ for all sufficiently large $A$. Therefore, 
$\int_0^1H(x+\tau_A)\,dx=\int_0^{-\tau_A}1\,dx=-\tau_A$, whence
the preceding estimate gives
\begin{equation*}
	\left|\tau_A+\frac12\right|
	\leq A^{-1}\|q-H\|_{L^1(\R)}.
\end{equation*}
Since $\|q-H\|_{L^1(\R)}$ is finite and independent of $A$,
we conclude that $\tau_A=-\frac12+O(A^{-1})$ as $A\to+\infty$.
Translation invariance of the $L^p(\R)$ norm and the identity
$\|H(\cdot+\tau_A)-H(\cdot-\frac12)\|_{L^p(\R)}^p
=|\tau_A+\frac12|$ give
\begin{equation}\label{eq:constant}
\|V_A-\mathbf1_{\{x<1/2\}}\|_{L^p(\R\times\T^{N-1})}
=O(A^{-1/p}), \qquad 1\leq p<\infty.
\end{equation}
Since $A(x+\tau_A)=A(x-1/2)+O(1)$, we also have
$V_A(x,y)\to\mathbf1_{\{x<1/2\}}$ for every $x\ne1/2$.
Moreover, $\nabla_yV_A=0$ and
\begin{equation*}
	A^{-2}\int_{\R\times\T^{N-1}}|V_{A,x}|^2\,dx\,dy
	=A^{-1}\int_\R|q'(s)|^2\,ds\to0.
\end{equation*}

For $\gamma=\alpha_0$, the limiting equation reduces to
$\Delta_yU+f(U)=0$. Thus, for each $L\geq0$, the
$y$-independent function
\begin{equation*}
	Q_L(x)=
	\begin{cases}
		1,&x<0,\\
		\theta,&0\leq x<L,\\
		0,&x\geq L
	\end{cases}
\end{equation*}
is a nonincreasing distributional solution with
$Q_L(-\infty)=1$ and $Q_L(+\infty)=0$.
Each $Q_L$ admits a translate satisfying the integral normalization.
Translations preserve the length $L$ of the interval on which
$Q_L=\theta$. Hence, profiles with distinct values of $L$ are not
translates, and uniqueness up to translation fails for the limiting
equation when $\alpha$ is constant. In addition, continuity of $V_A$ and the jump of the limit in
\eqref{eq:constant} give
\begin{equation*}
	\|V_A-\mathbf1_{\{x<1/2\}}\|_{L^\infty(\R\times\T^{N-1})}
	\geq\frac12
	\quad\hbox{for every }A>0.
\end{equation*}

\subsection{Global convergence of full profiles}

\begin{lemma}
\label{lem5.1}
Let $\gamma\in\R$, $A_n\to+\infty$, and
$b_n\in C(\T^{N-1})$ satisfy
$\|b_n-(\gamma-\alpha)\|_{L^\infty(\T^{N-1})}\to0$.
Let $V_n\in C^2(\R\times\T^{N-1})$ satisfy
$0<V_n<1$, $V_{n,x}<0$, and
\begin{equation*}
	A_n^{-2}V_{n,xx}+\Delta_yV_n+b_n(y)V_{n,x}+f(V_n)=0
	\quad\hbox{in }\R\times\T^{N-1},
\end{equation*}
with $V_n(-\infty,\cdot)=1$ and $V_n(+\infty,\cdot)=0$
uniformly in $\T^{N-1}$ for each $n$.
Assume that $V_n\to U$ in
$L^1_{\mathrm{loc}}(\R\times\T^{N-1})$, where $U$ is a full
degenerate front with speed $\gamma$.
Then, for every $1\leq p<+\infty$,
\begin{equation}
\|V_n-U\|_{L^p(\R\times\T^{N-1})}
+\|\nabla_yV_n-\nabla_yU\|_{L^2(\R\times\T^{N-1})}\to0,
\qquad
A_n^{-2}\|V_{n,x}\|_{L^2(\R\times\T^{N-1})}^2\to0.
\label{eq:global-convergence}
\end{equation}
Moreover, $f(U)\in L^1(\R\times\T^{N-1})$ and
$\nabla_yU\in L^2(\R\times\T^{N-1})$, with
\begin{equation}
\gamma-\bar\alpha=\int_{\R\times\T^{N-1}}f(U)\,dx\,dy,
\qquad
\int_{\R\times\T^{N-1}}|\nabla_yU|^2\,dx\,dy
=\int_{\R\times\T^{N-1}}f(U)(U-\tfrac12)\,dx\,dy.
\label{eq:global-identities}
\end{equation}
There exist $n_0\in\mathbb N$ and $R,C,\lambda>0$ such that
\begin{equation*}
	V_n(x,y)\leq Ce^{-\lambda x}\quad \text{for}~x\geq R,\qquad
	1-V_n(x,y)\leq Ce^{\lambda x}\quad \text{for}~x\leq-R,
\end{equation*}
for every $n\geq n_0$ and $y\in\T^{N-1}$.
After passing to a subsequence, $V_n\to U$ almost everywhere in
$\R\times\T^{N-1}$. If $U$ is continuous, then $V_n$ converges to $U$ uniformly in $\R\times\T^{N-1}$
as $n\to+\infty$.
\end{lemma}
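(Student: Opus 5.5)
The argument proceeds in three stages: uniform exponential tails for $V_n$, global $L^p$ convergence together with the integral identities, and finally energy convergence obtained from exact global energy identities.

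\emph{Step 1: uniform tails.} Choose $\rho$ small so that $f'\le -\mu<0$ on $[0,\rho]\cup[1-\rho,1]$. Since $U(+\infty,\cdot)=0$ uniformly, fix $R_0$ with $U\le\rho/4$ on $(R_0-2,+\infty)\times\T^{N-1}$. Local $L^1$ convergence gives $\int_{R_0-2}^{R_0-1}\langle V_n\rangle\,dx\le\rho/4+o(1)$. Since $V_n$ is monotone in $x$, this yields $\langle V_n(R_0-1,\cdot)\rangle\le\rho/3$ for large $n$. To turn this average bound into a sup bound on the section $x=R_0$, I would reuse the device of Lemma~\ref{lem:longitudinal-limits}. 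Convolve in $x$ with a fixed mollifier $\rho_1$ supported in $(-1,1)$. Because $\|V_{n,x}\|_{L^1}=1$ and the term $A_n^{-2}V_{n,xx}$ becomes $A_n^{-2}\rho_1''*V_n$, which is uniformly bounded, the transverse equation gives uniform $W^{2,r}(\T^{N-1})$ bounds on the sections of $Z_n=\rho_1*_xV_n$. Combined with the small $L^1$ average, this gives $\sup_y Z_n(R_0,y)\le\rho/2$, and monotonicity then gives $V_n\le\rho/2$ on $[R_0+1,\infty)\times\T^{N-1}$ for $n\ge n_0$.

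On that half-cylinder, $f(V_n)\le -\mu V_n$. I would compare $V_n$ with the supersolution $\tfrac{\rho}{2}e^{-\lambda(x-R_0-1)}$, choosing $\lambda>0$ small so that $A_n^{-2}\lambda^2+\lambda\|b_n\|_\infty<\mu$. The maximum principle on the half-cylinder applies because $V_n\to0$ at $+\infty$. The same argument for $1-V_n$ at $-\infty$ gives the left tail.

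\emph{Step 2: global convergence and identities.} The tails, which pass to $U$ as well, give uniform integrable domination of $|V_n-\mathbf 1_{x<0}|^p$. Together with local convergence, this yields convergence in $L^p(\R\times\T^{N-1})$ for all finite $p$, and a further subsequence converges a.e.

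To get the first identity in \eqref{eq:global-identities}, integrate the $V_n$ equation over $(-X,X)\times\T^{N-1}$. The boundary terms are $\int b_n[V_n]_{-X}^{X}$ and $A_n^{-2}[V_{n,x}]$. The latter vanishes as $X\to\infty$ by standard elliptic tail decay at fixed $n$. This gives $\int\! \int f(V_n)=\int b_n\,dy$, and passing to the limit using the tails yields $\gamma-\bar\alpha=\int f(U)$. Here $f(V_n)\to f(U)$ in $L^1$ because $|f(s)|\le C\min(s,1-s)$.

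\emph{Step 3: energy convergence.} Test the $V_n$ equation with $V_n-\tfrac12$ on the whole cylinder. The drift term integrates to $\int b_n\big[(V_n-\tfrac12)^2/2\big]^{+\infty}_{-\infty}=0$, since both ends give $1/8$. This gives
\[
\int|\nabla_yV_n|^2+A_n^{-2}\int|V_{n,x}|^2=\int f(V_n)(V_n-\tfrac12).
\]
The finiteness of the left side at fixed $n$ follows from exponential decay of the derivatives. The right side converges to $\int f(U)(U-\tfrac12)$.

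For $U$, I would justify the same identity by the convolution $U^h=\rho_h*_xU$ as in Lemma~\ref{lem:local-strong-energy}, tested with $\chi_R(U^h-\tfrac12)$, where $\chi_R$ is a cutoff. The cutoff error $\tfrac12\int(\gamma-\alpha)\chi_R'\,\big((U^h-\tfrac12)^2-\tfrac14\big)$ is controlled by the exponential tails of $U$. Sending $h\to0$ and then $R\to\infty$ proves the second identity, and in particular $\nabla_yU\in L^2$.

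Then weak lower semicontinuity (since $\nabla_yV_n\rightharpoonup\nabla_yU$ locally) forces $A_n^{-2}\|V_{n,x}\|^2\to0$ and $\|\nabla_yV_n\|_{L^2}\to\|\nabla_yU\|_{L^2}$, hence strong $L^2$ convergence.

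\emph{Uniform convergence when $U$ is continuous.} On $[-R,R]\times\T^{N-1}$, I would use monotonicity: $V_n(x+\epsilon,y)\le Z$-type averages $\le V_n(x-\epsilon,y)$. The uniform transverse equicontinuity of $Z_n$, combined with local $L^1$ convergence and the uniform continuity of $U$, upgrades this to uniform convergence on compacts, as in the sandwich argument of Lemma~\ref{lem:longitudinal-limits}. The tails handle $|x|>R$.

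\emph{Main obstacle.} The main obstacle is Step~1, converting $L^1$ smallness of $V_n$ on a section into a uniform pointwise bound independent of $n$. The $A_n^{-2}V_{n,xx}$ term prevents direct transverse estimates, and the mollified transverse $W^{2,r}$ bound, which uses $\|V_{n,x}\|_{L^1}=1$ and monotonicity, is what handles it.
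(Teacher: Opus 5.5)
Your proposal is correct and follows essentially the paper's route: $x$-mollification with uniform transverse $W^{2,r}$ bounds, monotone sandwiching and exponential barriers for the tails, then tail-dominated global $L^p$ convergence. It continues, as the paper does, with the exact energy identity from testing with $V_n-\tfrac12$, its justification for $U$ via $\rho_h*_xU$ and cutoffs, and weak lower semicontinuity. Deriving $\gamma-\bar\alpha=\int f(U)$ from the exact finite-$n$ identity $\iint f(V_n)=\int_{\T^{N-1}}b_n$, instead of testing the limit equation with $\chi_L$, is an equally valid minor variation.

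There is one quantitative slip to fix in Step 1. An average bound $\rho/3$, combined with an equicontinuity modulus that does not depend on $\rho$, does not yield $\sup_yZ_n(R_0,\cdot)\le\rho/2$. Instead, choose $R_0$ with $U\le\delta$ on $(R_0-2,\infty)\times\T^{N-1}$, for a $\delta=\delta(\rho)$ dictated by that modulus. The paper avoids this by first proving $Z_n\to Z$ locally uniformly via Arzel\`a--Ascoli.
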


\begin{proof}
	{\it Step 1. Uniform exponential bounds.}
	Since $f'(0),f'(1)<0$, choose $0<\eta<1/2$ and $\kappa>0$
	such that
	\begin{equation}
		f(s)\leq-\kappa s,\qquad
		-f(1-s)\leq-\kappa s
		\quad\hbox{for }0\leq s\leq\eta.
		\label{eq:stable}
	\end{equation}

	Set $B_0:=1+\|\gamma-\alpha\|_{L^\infty(\T^{N-1})}$.
	Since $b_n\to\gamma-\alpha$ uniformly and $A_n\to+\infty$,
	there exists $n_0\in\mathbb N$ such that
	\begin{equation*}
		\|b_n\|_{L^\infty(\T^{N-1})}\leq B_0,\qquad
		A_n^{-1}\leq1
		\quad\hbox{for }n\geq n_0.
	\end{equation*}
	Let $\rho\in C_c^\infty((-1,1))$ satisfy $\rho\geq0$ and
	$\int_\R\rho(s)\,ds=1$, and define
	\begin{equation*}
			Z_n(x,y):=\int_\R\rho(s)V_n(x-s,y)\,ds,
		\qquad
		Z(x,y):=\int_\R\rho(s)U(x-s,y)\,ds.
	\end{equation*}
	Since $0<V_n<1$, it follows that $0\leq Z_n\leq1$ and
	\begin{equation*}
			\|Z_{n,x}\|_{L^\infty(\R\times\T^{N-1})}
		\leq\|\rho'\|_{L^1(\R)},~~~	\|Z_{n,xx}\|_{L^\infty(\R\times\T^{N-1})}
		\leq\|\rho''\|_{L^1(\R)}.
	\end{equation*}
	
	Integrating the equation for $V_n(x-s,y)$ against $\rho(s)$
	gives
	\begin{equation*}
			\Delta_yZ_n
		=-A_n^{-2}Z_{n,xx}-b_n(y)Z_{n,x}
		-\int_\R\rho(s)f(V_n(x-s,y))\,ds,
	\end{equation*}
	which implies
	\begin{equation*}
			\|\Delta_yZ_n\|_{L^\infty(\R\times\T^{N-1})}
		\leq\|\rho''\|_{L^1(\R)}
		+B_0\|\rho'\|_{L^1(\R)}
		+\|f\|_{L^\infty([0,1])}
		\quad\hbox{for }n\geq n_0.
	\end{equation*}
	The elliptic estimate
	\eqref{eq:section4-elliptic-estimate} therefore yields
	\begin{equation*}
			\sup_{n\geq n_0}\sup_{x\in\R}
		\|Z_n(x,\cdot)\|_{W^{2,r}(\T^{N-1})}<+\infty
		\quad\hbox{for every }1<r<+\infty.
	\end{equation*}
	
	 Fix now $r>N-1$.
	The uniform $W^{2,r}(\T^{N-1})$ estimate  above
	and Sobolev embedding bound the transverse derivatives.
	Together with the bound on $Z_{n,x}$, this yields
	\begin{equation*}
			\|Z_{n,x}\|_{L^\infty(\R\times\T^{N-1})}
		+\|\nabla_yZ_n\|_{L^\infty(\R\times\T^{N-1})}
		\leq C
		\quad\hbox{for }n\geq n_0,
	\end{equation*}
	where $C>0$ is independent of $n$.
	After extending the functions periodically in $y$,
	integration along line segments gives
$|Z_n(x,y)-Z_n(x',y')|
	\leq C\bigl(|x-x'|+|y-y'|\bigr)$
	for all $x,x'\in\R$, $y,y'\in\R^{N-1}$, and $n\geq n_0$.
	Since $C$ is independent of $n$, this proves equicontinuity.
	Moreover, $0\leq Z_n\leq1$ gives uniform boundedness.
	Therefore, the Arzel\`a-Ascoli theorem applies on
	$[-m,m]\times\T^{N-1}$ for each $m=1,2,\ldots$.
	A diagonal extraction gives a subsequence $(Z_{n_k})$
	and a continuous function $W$ on $\R\times\T^{N-1}$ such that
	\begin{equation*}
			\|Z_{n_k}-W\|_{L^\infty((-L,L)\times\T^{N-1})}\to0
		\quad\hbox{for every }L>0.
	\end{equation*}
	
	On the other hand, 	the definitions of $Z_n$ and $Z$, together with the
	support of $\rho$, give
	\begin{equation*}
		\begin{aligned}
			\|Z_n-Z\|_{L^1((-L,L)\times\T^{N-1})}
			&\leq\int_{-1}^{1}\rho(s)
			\|V_n-U\|_{L^1((-L-s,L-s)\times\T^{N-1})}\,ds\\
			&\leq
			\|V_n-U\|_{L^1((-L-1,L+1)\times\T^{N-1})}
		\end{aligned}
	\end{equation*}
	for every $L>0$. Since $V_n\to U$ locally in $L^1$,
	the right-hand side tends to zero. Thus,
	$Z_n\to Z$ in $L^1_{\mathrm{loc}}(\R\times\T^{N-1})$.
	Therefore, 
	\begin{equation*}
		\begin{aligned}
			\|W-Z\|_{L^1((-L,L)\times\T^{N-1})}
			&\leq
			2L\|W-Z_{n_k}\|_{L^\infty((-L,L)\times\T^{N-1})}+
			\|Z_{n_k}-Z\|_{L^1((-L,L)\times\T^{N-1})}\to0.
		\end{aligned}
	\end{equation*}
Hence, $W=Z$ almost everywhere.
We henceforth use this continuous representative of $Z$.

We now prove that
\begin{equation}
	\|Z_n-Z\|_{L^\infty((-L,L)\times\T^{N-1})}\to0
	\quad\hbox{for every }L>0.
	\label{eq:smoothuniform}
\end{equation}
Suppose not. Then, there exist
$L>0$, $\varepsilon>0$, and a subsequence $(Z_{m_j})_{j\in\mathbb{N}}$ such that
\begin{equation*}
	\|Z_{m_j}-Z\|_{L^\infty((-L,L)\times\T^{N-1})}
	\geq\varepsilon
	\quad\hbox{for every }j\in\mathbb N.
\end{equation*}
By the uniform boundedness and equicontinuity proved above,
the Arzel\`a-Ascoli theorem gives a further subsequence
converging uniformly on $[-L,L]\times\T^{N-1}$.
Its limit agrees with $Z$ almost everywhere, because this
subsequence also converges to $Z$ in
$L^1((-L,L)\times\T^{N-1})$.
Since both functions are continuous, they agree everywhere
on $[-L,L]\times\T^{N-1}$.
Thus, along this further subsequence, the preceding
$L^\infty$ norm tends to zero, contradicting its lower bound
by $\varepsilon$. Consequently, \eqref{eq:smoothuniform} follows.

Since $U(+\infty,\cdot)=0$ and
	$U(-\infty,\cdot)=1$ uniformly on $\T^{N-1}$, we can choose $R>2$ so that $U(x,y)\leq\eta/2$ for $x\geq R-2$ and
	$1-U(x,y)\leq\eta/2$ for $x\leq-R+2$, uniformly on $\T^{N-1}$.
	The integral formula for $Z$ then gives
	$Z(R-1,y)=\int_{-1}^{1}\rho(s)U(R-1-s,y)\,ds\leq\eta/2$ and $1-Z(-R+1,y)=\int_{-1}^{1}\rho(s)(1-U(-R+1-s,y))\,ds\leq\eta/2$, where these inequalities first hold almost everywhere and then
	extend to every point by continuity of $Z$.
	By \eqref{eq:smoothuniform}, after increasing $n_0$,
	\begin{equation*}
			Z_n(R-1,y)\leq\eta,\qquad
		1-Z_n(-R+1,y)\leq\eta
		\quad\hbox{for }n\geq n_0,\ y\in\T^{N-1}.
	\end{equation*}
	Monotonicity and $\operatorname{supp}\rho\subset(-1,1)$ give
	$V_n(x+1,y)\leq Z_n(x,y)\leq V_n(x-1,y)$.
	Consequently, $V_n(R,y)\leq\eta$ and
	$1-V_n(-R,y)\leq\eta$, and monotonicity yields
	\begin{equation}
		V_n(x,y)\leq\eta\quad\hbox{for }x\geq R,\qquad
		1-V_n(x,y)\leq\eta\quad\hbox{for }x\leq-R,
		\label{eq:tailstrip}
	\end{equation}
	for every $n\geq n_0$ and $y\in\T^{N-1}$.
	
Choose $\lambda>0$ such that
	$\lambda^2+B_0\lambda<\kappa$, and set
	\begin{equation*}
		P(x):=\eta e^{-\lambda(x-R)}
		\quad\hbox{for }x\geq R.
	\end{equation*}
	Because $A_n^{-2}\leq1$ and $|b_n|\leq B_0$, we have
	\begin{equation*}
			\begin{aligned}
			&(A_n^{-2}\partial_{xx}+\Delta_y+b_n(y)\partial_x-\kappa)P=(A_n^{-2}\lambda^2-b_n(y)\lambda-\kappa)P
			\leq(\lambda^2+B_0\lambda-\kappa)P<0.
		\end{aligned}
	\end{equation*}
	Moreover, we also notice that 
	\begin{equation*}
		(A_n^{-2}\partial_{xx}+\Delta_y+b_n(y)\partial_x-\kappa)V_n
		=-f(V_n)-\kappa V_n\geq0
		\quad\hbox{for }x\geq R.
	\end{equation*}
	At $x=R$, \eqref{eq:tailstrip} gives $V_n(R,y)\leq P(R)=\eta$ for all $y\in\T^{N-1}$.
	For each fixed $n$, the prescribed limit of $V_n$ also gives
	$V_n-P\to0$ uniformly in $y$ as $x\to+\infty$.
	Thus, any positive value of $V_n-P$ would lead to a positive
	interior maximum. At that point its first derivatives vanish
	and its second derivatives in each coordinate direction are
	nonpositive, so the operator applied to $V_n-P$ is at most
	$-\kappa(V_n-P)<0$. This contradicts the preceding
	differential inequalities. Thus, $V_n\leq P$ on $[R,+\infty)\times\T^{N-1}$.
	
Using the second inequality in \eqref{eq:stable}, the same
	comparison applies to $1-V_n$ on
	$(-\infty,-R]\times\T^{N-1}$ with
	$\eta e^{\lambda(x+R)}$.
	Consequently,
	\begin{equation}
		\begin{aligned}
			V_n(x,y)\leq\eta e^{-\lambda(x-R)}
			\quad\hbox{for }x\geq R,~~~~
			1-V_n(x,y)\leq\eta e^{\lambda(x+R)}
			\quad\hbox{for }x\leq-R
		\end{aligned}
		\label{eq:exptails}
	\end{equation}
	for every $n\geq n_0$ and $y\in\T^{N-1}$.
	
	\medskip\noindent
	{\it Step 2. Global $L^p$ convergence.}
	Since $V_n\to U$ in
	$L^1_{\mathrm{loc}}(\R\times\T^{N-1})$, there exists a
	subsequence $(V_{n_k})$ converging to $U$ almost everywhere
	in $\R\times\T^{N-1}$.
	The bounds in \eqref{eq:exptails} are independent of $n$.
	Passing to the limit along this subsequence therefore gives
	the same bounds for $U$ almost everywhere.
	Therefore, for every $n\geq n_0$, we have
	\begin{equation*}
			\begin{aligned}
			|V_n-U|
			&\leq V_n+U
			\leq2\eta e^{-\lambda(x-R)}
			&&\quad\hbox{for a.e. }x\geq R,\\
			|V_n-U|
			&\leq(1-V_n)+(1-U)
			\leq2\eta e^{\lambda(x+R)}
			&&\quad\hbox{for a.e. }x\leq-R.
		\end{aligned}
	\end{equation*}
	Moreover, $|V_n-U|\leq1$, so
	$|V_n-U|^p\leq|V_n-U|$ for every $1\leq p<+\infty$.
	Splitting the integral into three pieces, we obtain, for every $L>R$ and $n\geq n_0$,
	\begin{align*}
		\|V_n-U\|_{L^p(\R\times\T^{N-1})}^p
		&\leq
		\|V_n-U\|_{L^1((-L,L)\times\T^{N-1})}+2\eta\int_L^{+\infty}e^{-\lambda(x-R)}\,dx
		+2\eta\int_{-\infty}^{-L}e^{\lambda(x+R)}\,dx\\
		&=
		\|V_n-U\|_{L^1((-L,L)\times\T^{N-1})}
		+\frac{4\eta}{\lambda}e^{-\lambda(L-R)}.
	\end{align*}
	Since, for each fixed $L$, the first term on the right-hand side above tends to zero by assumption, it follows from passing to the upper limit as
	$n\to+\infty$ and then letting $L\to+\infty$ that
	\begin{equation*}
			\|V_n-U\|_{L^p(\R\times\T^{N-1})}\to0
		\quad\hbox{for every }1\leq p<+\infty.
	\end{equation*}

\medskip\noindent
{\it Step 3. Integral identities and energy convergence.}
Since $f$ is Lipschitz continuous on $[0,1]$ and
$f(0)=f(1)=0$, there exists $C>0$ such that
\begin{equation*}
	|f(s)|\leq C\min\{s,1-s\}
	\quad\hbox{for }0\leq s\leq1.
\end{equation*}
The exponential bounds \eqref{eq:exptails}, which also hold
for $U$ almost everywhere, therefore control the reaction
terms on both tails. On $(-R,R)\times\T^{N-1}$, these terms
are bounded by $\|f\|_{L^\infty([0,1])}$. Consequently,
\begin{equation*}
	\|f(U)\|_{L^1(\R\times\T^{N-1})}
	+\sup_{n\geq n_0}
	\|f(V_n)\|_{L^1(\R\times\T^{N-1})}<+\infty.
\end{equation*}
Moreover, both $f$ and $s\mapsto f(s)(s-\tfrac12)$ are
Lipschitz continuous on $[0,1]$. The global $L^1$ convergence
proved in Step~2 thus gives
\begin{equation}
	\label{C}
\begin{aligned}
	\|f(V_n)-f(U)\|_{L^1(\R\times\T^{N-1})}+&
	\|f(V_n)(V_n-\tfrac12)-f(U)(U-\tfrac12)\|
	_{L^1(\R\times\T^{N-1})}\\
	&~~~~~~~~~~~~~~~~~~~~~~~~~~\leq C\|V_n-U\|_{L^1(\R\times\T^{N-1})}\to0.
\end{aligned}
\end{equation}

Choose $\chi\in C_c^\infty(\R)$ with $0\leq\chi\leq1$,
$\chi=1$ on $[-1,1]$ and $\chi=0$ outside $[-2,2]$,
and set $\chi_L(x):=\chi(x/L)$ for $L>0$.
Multiplying the equation for $V_n$ by
$(V_n-\tfrac12)\chi_L$ and integrating by parts yields
\begin{eqnarray}	\label{eq:cutoffglobal}
	\begin{aligned}
		&\int_{\R\times\T^{N-1}}\chi_L
		\bigl(|\nabla_yV_n|^2+A_n^{-2}|V_{n,x}|^2\bigr)\,dx\,dy\\
		=&
		\frac{A_n^{-2}}{2}
		\int_{\R\times\T^{N-1}}\chi_L''(V_n-\tfrac12)^2\,dx\,dy
		-\frac12\int_{\R\times\T^{N-1}}
		b_n(y)\chi_L'(V_n-\tfrac12)^2\,dx\,dy\\
		&\qquad+
		\int_{\R\times\T^{N-1}}
		\chi_L f(V_n)(V_n-\tfrac12)\,dx\,dy.
	\end{aligned}
\end{eqnarray}
Since $b_n$ is independent of $x$ and
$\int_\R\chi_L'\,dx=\int_\R\chi_L''\,dx=0$, we may replace
$(V_n-\tfrac12)^2$ in the first two terms on the right-hand side by
$(V_n-\tfrac12)^2-\tfrac14=-V_n(1-V_n)$.
Moreover, the exponential bounds imply
\begin{equation*}
	\sup_{n\geq n_0}
	\|V_n(1-V_n)\|_{L^1(\R\times\T^{N-1})}
	+\|U(1-U)\|_{L^1(\R\times\T^{N-1})}<+\infty.
\end{equation*}
Together with
\begin{equation*}
	\|\chi_L'\|_{L^\infty(\R)}
	=L^{-1}\|\chi'\|_{L^\infty(\R)},
	\qquad
	\|\chi_L''\|_{L^\infty(\R)}
	=L^{-2}\|\chi''\|_{L^\infty(\R)},
\end{equation*}
and the bounds $A_n^{-1}\leq1$ and
$\|b_n\|_{L^\infty(\T^{N-1})}\leq B_0$, this shows that
the sum of the absolute values of the first two terms in
\eqref{eq:cutoffglobal} is at most $C(L^{-2}+L^{-1})$,
uniformly for $n\geq n_0$.

For each fixed $n\geq n_0$, the reaction term converges
to its integral over $\R\times\T^{N-1}$ by dominated
convergence. Since $\chi_L\geq0$ and $\chi_L\to1$ pointwise,
Fatou's lemma first gives finite global energy.
We may then apply dominated convergence to the left-hand
side of \eqref{eq:cutoffglobal}, obtaining
\begin{equation}
	\int_{\R\times\T^{N-1}}
	\bigl(|\nabla_yV_n|^2+A_n^{-2}|V_{n,x}|^2\bigr)\,dx\,dy
	=
	\int_{\R\times\T^{N-1}}
	f(V_n)(V_n-\tfrac12)\,dx\,dy.
	\label{eq:exactglobal}
\end{equation}
The right-hand side is bounded uniformly for $n\geq n_0$.
In particular, $\nabla_yV_n$ is bounded in
$L^2(\R\times\T^{N-1})$.
Integration by parts against smooth compactly supported
test functions and the local $L^1$ convergence identify
every subsequential weak limit as $\nabla_yU$.
Thus, $\nabla_yU\in L^2(\R\times\T^{N-1})$, and
\begin{equation}
	\label{weakconver}
	\nabla_yV_n\rightharpoonup\nabla_yU
	\quad\hbox{in }L^2(\R\times\T^{N-1})
\end{equation}
along the whole given sequence.

Since $0\leq U\leq1$ almost everywhere,
$\nabla_yU\in L^2(\R\times\T^{N-1})$, and
$L_\gamma U+f(U)=0$ in $\D'(\R\times\T^{N-1})$,
the longitudinal regularization argument in the proof of
Lemma~\ref{lem:local-strong-energy} gives
\eqref{eq:revision-limit-energy} with $\chi=\chi_L$.
Testing the equation for $U$ with $\chi_L$ also gives
\begin{equation}
	\label{U+chi_L}
	\int_{\R\times\T^{N-1}}f(U)\chi_L \,dx\,dy
	=
	\int_{\R\times\T^{N-1}}
	(\gamma-\alpha(y))U(x,y)\chi_L'(x)\,dx\,dy.
\end{equation}
Writing $f(U)U=f(U)(U-\tfrac12)+\tfrac12f(U)$ in
\eqref{eq:revision-limit-energy} and substituting the
preceding equality, we obtain
\begin{align*}
\int_{\R\times\T^{N-1}}|\nabla_yU|^2\chi_L\,dx\,dy
&=-\frac12\int_{\R\times\T^{N-1}}
(\gamma-\alpha(y))(U^2-U)\chi_L'\,dx\,dy\\
&\quad+\int_{\R\times\T^{N-1}}
f(U)(U-\tfrac12)\chi_L\,dx\,dy.
\end{align*}
Since
$\|\chi_L'\|_{L^\infty(\R)}
=L^{-1}\|\chi'\|_{L^\infty(\R)}$
and $U(1-U)\in L^1(\R\times\T^{N-1})$,
which follows from the exponential bounds for $U$,  the first term on the
right-hand side above  has the following estimate
\begin{equation*}
	\frac12\|\gamma-\alpha\|_{L^\infty(\T^{N-1})}
	\|\chi_L'\|_{L^\infty(\R)}
	\|U(1-U)\|_{L^1(\R\times\T^{N-1})}\to0
	\quad\hbox{as }L\to+\infty.
\end{equation*}
Since $|\nabla_yU|^2$ and $f(U)(U-\tfrac12)$ are integrable,
dominated convergence now yields
\begin{equation*}
	\int_{\R\times\T^{N-1}}|\nabla_yU|^2\,dx\,dy
	=
	\int_{\R\times\T^{N-1}}f(U)(U-\tfrac12)\,dx\,dy.
\end{equation*}

To obtain the speed identity, we return to the equality \eqref{U+chi_L}.
Let $H(x):=\mathbf1_{\{x<0\}}$.
The exponential bounds imply
$U-H\in L^1(\R\times\T^{N-1})$, while
\begin{equation*}
	\int_\R H(x)\chi_L'(x)\,dx
	=\int_{-\infty}^{0}\chi_L'(x)\,dx
	=\chi_L(0)=1.
\end{equation*}
Consequently,
\begin{align*}
	\left|
	\int_{\R\times\T^{N-1}}f(U)\chi_L \,dx\,dy
	-(\gamma-\bar\alpha)
	\right|
	&=
	\left|
	\int_{\R\times\T^{N-1}}
	(\gamma-\alpha(y))(U(x,y)-H(x))\chi_L'(x)\,dx\,dy
	\right|\\
	&\leq
	\|\gamma-\alpha\|_{L^\infty(\T^{N-1})}
	\|\chi_L'\|_{L^\infty(\R)}
	\|U-H\|_{L^1(\R\times\T^{N-1})}\to0.
\end{align*}
Since $f(U)\in L^1(\R\times\T^{N-1})$, 
 it gives $\int_{\R\times\T^{N-1}}f(U)\,dx\,dy
=\gamma-\bar\alpha$ as $L\to\infty$.
This proves \eqref{eq:global-identities}.

Finally, \eqref{eq:exactglobal}, the second identity in  \eqref{eq:global-identities},
and \eqref{C} give
\begin{equation*}
	\|\nabla_yV_n\|_{L^2(\R\times\T^{N-1})}^2
	+A_n^{-2}\|V_{n,x}\|_{L^2(\R\times\T^{N-1})}^2
	\to\|\nabla_yU\|_{L^2(\R\times\T^{N-1})}^2.
\end{equation*}
The weak convergence \eqref{weakconver} of $\nabla_yV_n$ also gives
\begin{equation*}
	\int_{\R\times\T^{N-1}}
	\nabla_yV_n\cdot\nabla_yU\,dx\,dy
	\to\|\nabla_yU\|_{L^2(\R\times\T^{N-1})}^2.
\end{equation*}
Combining these two limits yields
\begin{equation*}
	\|\nabla_yV_n-\nabla_yU\|_{L^2(\R\times\T^{N-1})}^2
	+A_n^{-2}\|V_{n,x}\|_{L^2(\R\times\T^{N-1})}^2\to0.
\end{equation*}
Both terms are nonnegative, so each tends to zero.
Together with Step~2, this proves \eqref{eq:global-convergence}.

	\medskip
	\noindent
	{\it Step 4. Uniform convergence for continuous limits.}
	Assume that $U$ is continuous.
	For $0<h<1/2$, set $\rho_h(s):=h^{-1}\rho(s/h)$ and define
\begin{equation*}
	Z_{n,h}(x,y):=\int_\R\rho_h(s)V_n(x-s,y)\,ds,
	\qquad
	Z_h(x,y):=\int_\R\rho_h(s)U(x-s,y)\,ds.
\end{equation*}
For each fixed $h$, the argument leading to
	\eqref{eq:smoothuniform} gives
	\begin{equation*}
			\|Z_{n,h}-Z_h\|_{L^\infty((-L-1,L+1)\times\T^{N-1})}
		\to0
		\quad\hbox{for every }L>0.
	\end{equation*}
	Monotonicity yields
	$Z_{n,h}(x+h,y)\leq V_n(x,y)\leq Z_{n,h}(x-h,y)$.
	Consequently,
	\begin{equation*}
			\begin{aligned}
			\|V_n-U\|_{L^\infty((-L,L)\times\T^{N-1})}
			\!\leq\!
			\|Z_{n,h}-Z_h\|_{L^\infty((-L-1,L+1)\times\T^{N-1})}+\!\!
			\sup_{\substack{|x|\leq L,\ |s|\leq2h\\y\in\T^{N-1}}}
			|U(x+s,y)-U(x,y)|.
		\end{aligned}
	\end{equation*}
	For fixed $L$, the last term tends to zero as $h\to 0$
	by uniform continuity of $U$ on
	$[-L-1,L+1]\times\T^{N-1}$.
	Letting first $n\to+\infty$ and then $h\to 0$
	proves local uniform convergence.
	Combining this with \eqref{eq:exptails}, we obtain, for every
	$L>R$,
\begin{equation*}
	\limsup_{n\to+\infty}
	\|V_n-U\|_{L^\infty(\R\times\T^{N-1})}
	\leq2\eta e^{-\lambda(L-R)}.
\end{equation*}
	Letting $L\to+\infty$ completes the proof.
\end{proof}

\subsection{Proof of Theorem \ref{thm:main}}

We divide into several steps. First, we prove convergence of the speeds and identify full limits
of the normalized profiles. Lemma~\ref{lem5.1}
then gives their global convergence and integral identities.
We next establish the strict speed bounds and conclude with the
uniqueness and convergence results under the H\"ormander condition \eqref{cdn_alpha}.

{\it Step 1. Convergence of the speeds.}
Suppose first that $\alpha$ satisfies \eqref{cdn_alpha}.
By \eqref{speed-bounds}, the family $\gamma_A=c_A/A$ is bounded
for $A\geq1$. Given a sequence $A_n\to+\infty$ along which
$\gamma_{A_n}\to\gamma$, Propositions~\ref{prop_alternative result}
and \ref{prop:exclude-two-components} give, after extraction and
longitudinal translation, a full degenerate front with speed
$\gamma$. Proposition~\ref{prop:full-uniqueness} shows that every
such limit has the same speed. Hence, $\gamma_A$ converges to a
number denoted by $\gamma^*(\alpha,f)$.

Let now $\alpha\in C^{1,\delta}(\T^{N-1})$ be arbitrary and fix
$\varepsilon>0$. By the Stone-Weierstrass theorem, there exists a real-valued
trigonometric polynomial $P$ such that
$\|\alpha-P\|_\infty<\varepsilon/2$.  If $P$ is constant, replace it
by $P+\eta\cos(2\pi y_1)$, where
$0<|\eta|<\varepsilon/2$.  Thus we obtain a nonconstant real analytic
trigonometric polynomial $\beta$ satisfying
$\|\alpha-\beta\|_{L^\infty(\T^{N-1})}<\varepsilon$.
Since $\beta$ is nonconstant on $\T^{N-1}$, for every $y\in\T^{N-1}$, some derivative of $\beta$ of positive
order is nonzero.   By compactness, there exists an integer $r\geq1$ such
that
$\sum_{1\leq|\zeta|\leq r}|D^\zeta\beta|>0$ everywhere.
Hence, $\beta$ satisfies \eqref{cdn_alpha}, and the preceding
argument gives convergence of $c_A(\beta,f)/A$.
By \eqref{Lip_speed},
\begin{equation*}
	\left|\frac{c_A(\alpha,f)}{A}-\frac{c_A(\beta,f)}{A}\right|
	<\varepsilon.
\end{equation*}
Consequently,
\begin{equation*}
	0\leq\limsup_{A\to+\infty}\frac{c_A(\alpha,f)}{A}
	-\liminf_{A\to+\infty}\frac{c_A(\alpha,f)}{A}\leq2\varepsilon.
\end{equation*}
Letting $\varepsilon\to 0$ proves \eqref{eq:speed-limit}, and
\eqref{speed-bounds} gives
$\min_{\T^{N-1}}\alpha\leq\gamma^*(\alpha,f)
\leq\max_{\T^{N-1}}\alpha$.

\medskip\noindent
{\it Step 2. Normalization and global profile convergence.}
For every $A>0$, the function
$s\mapsto\mathcal N(U_A(\cdot+s,\cdot))$ is continuous and
strictly decreasing, with limits $1$ and $0$ as $s\to-\infty$
and $s\to+\infty$, respectively. Hence, there is a unique $\tau_A\in\R$ such that
$\mathcal N(U_A(\cdot+\tau_A,\cdot))=1/2$.
Set $V_A(x,y):=U_A(x+\tau_A,y)$.

If $\alpha$ is constant, Section~\ref{subsec:constant} gives
\eqref{eq:general-profile-convergence} and almost-everywhere
convergence to $U(x,y)=\mathbf1_{\{x<1/2\}}$.
For this limit, $f(U)=0$ and $\nabla_yU=0$ almost everywhere,
while $\gamma^*(\alpha,f)=\bar\alpha$; thus,
\eqref{eq:general-identities} also holds.

Suppose that $\alpha$ is nonconstant and fix an arbitrary
sequence $A_n\to+\infty$. Step~1 gives
$\gamma_{A_n}\to\gamma^*(\alpha,f)$.
Propositions~\ref{prop_alternative result} and
\ref{prop:exclude-two-components} yield a subsequence, shifts
$t_n$, and a full degenerate front $U_0$ such that
\begin{equation*}
	U_{A_n}(\cdot+t_n,\cdot)\to U_0
	\quad\hbox{in }L^1_{\mathrm{loc}}(\R\times\T^{N-1}).
\end{equation*}
Moreover, $(U_0)_x\leq0$ in $\D'(\R\times\T^{N-1})$ and
$\nabla_yU_0\in L^2_{\mathrm{loc}}(\R\times\T^{N-1})$.
The uniform limits of $U_0$ allow us to choose $M>0$ with
\begin{equation*}
	\mathcal N(U_0(\cdot-M,\cdot))>\tfrac12,
	\qquad
	\mathcal N(U_0(\cdot+M,\cdot))<\tfrac12.
\end{equation*}
Local $L^1$ convergence transfers both strict inequalities to
$U_{A_n}(\cdot+t_n,\cdot)$ for all sufficiently large $n$.
By the definition of $\tau_{A_n}$ and strict monotonicity,
$t_n-M<\tau_{A_n}<t_n+M$.
After extraction, $\tau_{A_n}-t_n\to\ell\in\R$.
The translation estimate \eqref{eq:revision-xtranslation} gives
\begin{equation*}
	\|V_{A_n}-U_{A_n}(\cdot+t_n+\ell,\cdot)\|
	_{L^1(\R\times\T^{N-1})}
	\leq|\tau_{A_n}-t_n-\ell|\to0.
\end{equation*}
Therefore, $V_{A_n}\to U:=U_0(\cdot+\ell,\cdot)$ in
$L^1_{\mathrm{loc}}(\R\times\T^{N-1})$.
The limit is a full degenerate front with speed
$\gamma^*(\alpha,f)$, satisfies $U_x\leq0$ in $\D'(\R\times\T^{N-1})$,
and $\mathcal N(U)=1/2$.

Apply Lemma~\ref{lem5.1} with
$V_n=V_{A_n}$ and $b_n=\gamma_{A_n}-\alpha$.
It gives \eqref{eq:general-profile-convergence},
$f(U)\in L^1(\R\times\T^{N-1})$,
$\nabla_yU\in L^2(\R\times\T^{N-1})$, and
\eqref{eq:general-identities}.
A further extraction gives almost-everywhere convergence
along the same subsequence, preserving all these conclusions.

Section~\ref{subsec:constant} and \eqref{eq:global-convergence}
show that every sequence $A_n\to+\infty$ has a subsequence
along which the longitudinal energy in
\eqref{eq:main-longitudinal-energy} tends to zero.
If \eqref{eq:main-longitudinal-energy} failed for the whole family, a sequence of these
nonnegative energies bounded below by a positive constant
would contradict this property. This proves
\eqref{eq:main-longitudinal-energy}.

\medskip\noindent
{\it Step 3. Strict speed bounds.}
We first establish a weighted interpolation inequality
for the uniqueness argument below.
Let $a\in C(\T^{N-1})$ satisfy $a\geq0$ and $a>0$
almost everywhere. We claim that, for every $\eta>0$,
there exists $C_\eta>0$ such that
\begin{equation}
	\|w\|_{L^2(\T^{N-1})}^2
	\leq\eta\|\nabla_yw\|_{L^2(\T^{N-1})}^2
	+C_\eta\int_{\T^{N-1}}a(y)w(y)^2\,dy
	\quad\hbox{for }w\in H^1(\T^{N-1}).
	\label{eq:weighted-interpolation}
\end{equation}
Suppose that \eqref{eq:weighted-interpolation} fails.
Then, there exist a fixed $\eta>0$ and
a sequence $(w_j)_{j\in\mathbb{N}}$ in $H^1(\T^{N-1})$ such that
$\|w_j\|_{L^2(\T^{N-1})}=1$, and
\begin{equation*}
	\eta\|\nabla_yw_j\|_{L^2(\T^{N-1})}^2
	+j\int_{\T^{N-1}}a(y)w_j(y)^2\,dy<\|w_j\|_{L^2(\T^{N-1})}^2=1 \quad\hbox{for every }j\geq1.
\end{equation*}
Both terms on the left-hand side are nonnegative, whence
\begin{equation*}
	\|\nabla_yw_j\|_{L^2(\T^{N-1})}^2<\eta^{-1},
	\qquad
	\int_{\T^{N-1}}a(y)w_j(y)^2\,dy<j^{-1}.
\end{equation*}
The first bound together with $\|w_j\|_{L^2(\T^{N-1})}=1$ shows that $(w_j)_{j\in\mathbb{N}}$
is bounded in $H^1(\T^{N-1})$.
By the compact embedding into $L^2(\T^{N-1})$,
a subsequence converges strongly to some $w$ in
$L^2(\T^{N-1})$, and $\|w\|_{L^2(\T^{N-1})}=1$.
On the other hand,
\begin{equation*}
	0\leq\int_{\T^{N-1}}a(y)w(y)^2\,dy
	\leq
	2\|a\|_{L^\infty(\T^{N-1})}
	\|w-w_j\|_{L^2(\T^{N-1})}^2+\frac2j\to0.
\end{equation*}
Thus, $\int_{\T^{N-1}}a(y)w(y)^2\,dy=0$.
Since $a>0$ almost everywhere, this implies $w=0$
almost everywhere, contradicting
$\|w\|_{L^2(\T^{N-1})}=1$.
This proves \eqref{eq:weighted-interpolation}.

Consider a full front $U$ with speed $\gamma$. If 
$\gamma=\max_{\T^{N-1}}\alpha$, the function
$v(x,y):=U(-x,y)$ satisfies
\begin{equation*}
	(\gamma-\alpha(y))v_x-\Delta_yv=f(v).
\end{equation*}
If $\gamma=\min_{\T^{N-1}}\alpha$, the function
$v(x,y):=1-U(x,y)$ instead satisfies
\begin{equation*}
	(\alpha(y)-\gamma)v_x-\Delta_yv=-f(1-v).
\end{equation*}
In both cases, the coefficient of $v_x$ is nonnegative.
The assumption $|\{\alpha=\max_{\T^{N-1}}\alpha\}|=0$
gives $\gamma-\alpha>0$ almost everywhere in the upper-bound
case, while $|\{\alpha=\min_{\T^{N-1}}\alpha\}|=0$
gives $\alpha-\gamma>0$ almost everywhere in the lower-bound
case. For either equation, we now show that if $v=0$
almost everywhere in $(-\infty,x_0)\times\T^{N-1}$ for some
$x_0\in\R$, then $v=0$ almost everywhere in
$\R\times\T^{N-1}$. To treat both cases together, we consider the following
equation with coefficient $a$ and reaction term $g$.

Let $a$ satisfy the assumptions of
\eqref{eq:weighted-interpolation}, and let
$v\in L^\infty(\R\times\T^{N-1})$ satisfy
\begin{equation}
	a(y)v_x-\Delta_yv=g(v)
	\quad\hbox{in }\D'(\R\times\T^{N-1}),
	\label{eq:weighted-forward}
\end{equation}
with $0\leq v\leq1$ almost everywhere and
$\nabla_yv\in L^2_{\mathrm{loc}}(\R\times\T^{N-1})$.
Assume that $g$ is Lipschitz continuous on $[0,1]$ and
$g(0)=0$. We claim that, if there exists $x_0\in\R$ such that
$v=0$ a.e. in $(-\infty,x_0)\times\T^{N-1}$,
then $v=0$ almost everywhere in $\R\times\T^{N-1}$.

Choose
$\rho\in C_c^\infty((-1,1))$ with $\rho\geq0$ and
$\int_\R\rho(s)\,ds=1$. For $0<h<1$, set
\begin{equation*}
	\rho_h(s):=h^{-1}\rho(s/h),
	\qquad
	v_h(x,y):=\int_\R\rho_h(s)v(x-s,y)\,ds.
\end{equation*}
Since $a$ is independent of $x$, convolving
\eqref{eq:weighted-forward} gives
\begin{equation*}
	a(y)(v_h)_x-\Delta_yv_h
	=\int_\R\rho_h(s)g(v(x-s,y))\,ds
	\quad\hbox{in }\D'(\R\times\T^{N-1}).
\end{equation*}
The function $v_h$ is smooth in $x$ and has
$\nabla_yv_h\in L^2_{\mathrm{loc}}(\R\times\T^{N-1})$.
Thus, smooth approximation in $y$ allows us to test this
equation by $\chi v_h$ for $\chi\in C_c^\infty(\R)$.
Integrating by parts yields
\begin{equation}
	\label{D}
\begin{aligned}
	&-\frac12\int_{\R\times\T^{N-1}}
	a(y)\chi'(x)v_h^2\,dx\,dy
	+\int_{\R\times\T^{N-1}}
	\chi(x)|\nabla_yv_h|^2\,dx\,dy\\
	&~~~~~~~~~~~~~~~~~\qquad=
	\int_{\R\times\T^{N-1}}\chi(x)v_h(x,y)
	\left(\int_\R\rho_h(s)g(v(x-s,y))\,ds\right)\,dx\,dy.
\end{aligned}
\end{equation}
On the other hand,
for every bounded interval $I\subset\R$, convolution gives
\begin{equation*}
	\|v_h-v\|_{L^2(I\times\T^{N-1})}
	+\|\nabla_yv_h-\nabla_yv\|_{L^2(I\times\T^{N-1})}\to0
	\quad\hbox{as }h\to  0.
\end{equation*}
The averaged reaction term also converges to $g(v)$ in
$L^2(I\times\T^{N-1})$.
Since $a$ is bounded and $\chi$ has compact support,
we may pass to the limit in all three terms in \eqref{D}, and get
\begin{equation}
	\label{M}
	\frac12M'(x)
	+\|\nabla_yv(x,\cdot)\|_{L^2(\T^{N-1})}^2
	=\int_{\T^{N-1}}g(v(x,y))v(x,y)\,dy
	\quad\hbox{in }\D'(\R),
\end{equation}
where we have defined, 
for almost every $x\in\R$,
\begin{equation*}
	M(x):=\int_{\T^{N-1}}a(y)v(x,y)^2\,dy.
\end{equation*}
The last two terms in \eqref{M} are locally integrable in $x$.
Therefore, $M$ has a nonnegative locally absolutely
continuous representative, which we use below.

Choose $L>0$ such that $|g(s)|\leq Ls$ for $0\leq s\leq1$,
and take $\eta=(2L)^{-1}$ in
\eqref{eq:weighted-interpolation}.
For almost every $x$, we have
$v(x,\cdot)\in H^1(\T^{N-1})$, so that inequality gives
\begin{align*}
	\int_{\T^{N-1}}g(v(x,y))v(x,y)\,dy
	&\leq L\|v(x,\cdot)\|_{L^2(\T^{N-1})}^2\leq
	\frac12\|\nabla_yv(x,\cdot)\|_{L^2(\T^{N-1})}^2
	+LC_\eta M(x).
\end{align*}
Substituting this estimate into the energy identity yields
\begin{equation*}
	\frac12M'(x)
	+\frac12\|\nabla_yv(x,\cdot)\|_{L^2(\T^{N-1})}^2
	\leq LC_\eta M(x)
	\quad\hbox{for a.e. }x\in\R.
\end{equation*}
In particular, $M'(x)\leq2LC_\eta M(x)$ almost everywhere.

The assumed vanishing of $v$ gives $M=0$ almost everywhere
on $(-\infty,x_0)$. Since the chosen representative of $M$
is continuous, it follows that $M=0$ on $(-\infty,x_0]$.
Applying Gronwall's inequality on each finite interval
starting at $x_0$, we obtain
\begin{equation*}
	0\leq M(x)\leq M(x_0)e^{2LC_\eta(x-x_0)}=0
	\quad\hbox{for }x\geq x_0.
\end{equation*}
Thus, $M$ vanishes everywhere.
Since $a>0$ almost everywhere, the definition of $M$
implies $v=0$ almost everywhere in $\R\times\T^{N-1}$,
proving the claim.

Let $U$ be a full front obtained in Step~2 and write
$\gamma=\gamma^*(\alpha,f)$.
Testing its equation against functions independent of $y$ gives
\begin{equation*}
	\frac{d}{dx}\int_{\T^{N-1}}(\gamma-\alpha(y))U(x,y)\,dy
	=-\int_{\T^{N-1}}f(U(x,y))\,dy
	\quad\hbox{in }\D'(\R).
\end{equation*}
The weighted integral on the left has a locally absolutely
continuous representative. Since $f(U)\in L^1(\R\times\T^{N-1})$
and $U(+\infty,\cdot)=0$ uniformly in $\T^{N-1}$, integration
gives, for almost every $R\in\R$,
\begin{equation}
\int_R^{+\infty}\int_{\T^{N-1}}f(U(x,y))\,dy\,dx
=\int_{\T^{N-1}}(\gamma-\alpha(y))U(R,y)\,dy.
\label{eq:right-tail-flux}
\end{equation}

Assume that $|\{\alpha=\max_{\T^{N-1}}\alpha\}|=0$
and suppose, for contradiction, that
$\gamma=\max_{\T^{N-1}}\alpha$.
By \eqref{eq:stable} and the uniform limit at $+\infty$,
we may choose $R>0$ sufficiently large that
$f(U)\leq-\kappa U$ almost everywhere on
$(R,+\infty)\times\T^{N-1}$ and
\eqref{eq:right-tail-flux} holds at $R$. Then,
\begin{align*}
0&\leq\int_{\T^{N-1}}(\gamma-\alpha(y))U(R,y)\,dy
=\int_R^{+\infty}\int_{\T^{N-1}}f(U(x,y))\,dy\,dx\\
&\leq-\kappa\int_R^{+\infty}\int_{\T^{N-1}}U(x,y)\,dy\,dx
\leq0.
\end{align*}
Thus, $U=0$ almost everywhere in
$(R,+\infty)\times\T^{N-1}$.
The function $v(x,y):=U(-x,y)$ satisfies
\eqref{eq:weighted-forward} with
$a(y)=\gamma-\alpha(y)>0$ almost everywhere and $g=f$,
and vanishes almost everywhere in
$(-\infty,-R)\times\T^{N-1}$.
The preceding uniqueness argument gives $v=0$ almost
everywhere in $\R\times\T^{N-1}$, contradicting
$U(-\infty,\cdot)=1$. This proves the strict upper bound.

Assume now that $|\{\alpha=\min_{\T^{N-1}}\alpha\}|=0$
and suppose that $\gamma=\min_{\T^{N-1}}\alpha$.
Integrating the same distributional identity from $-\infty$
to $-R$ and using $U(-\infty,\cdot)=1$ gives
\begin{equation*}
	\int_{-\infty}^{-R}\int_{\T^{N-1}}f(U(x,y))\,dy\,dx
	=\int_{\T^{N-1}}(\gamma-\alpha(y))(1-U(-R,y))\,dy
\end{equation*}
for almost every $R>0$.
Choose such an $R$ sufficiently large that
$f(U)\geq\kappa(1-U)$ almost everywhere in
$(-\infty,-R)\times\T^{N-1}$, as permitted by
\eqref{eq:stable} and the uniform limit at $-\infty$.
Then,
\begin{align*}
0&\geq\int_{\T^{N-1}}(\gamma-\alpha(y))(1-U(-R,y))\,dy\\
&=\int_{-\infty}^{-R}\int_{\T^{N-1}}f(U(x,y))\,dy\,dx
\geq\kappa\int_{-\infty}^{-R}\int_{\T^{N-1}}(1-U(x,y))\,dy\,dx
\geq0.
\end{align*}
Therefore, $1-U=0$ almost everywhere in
$(-\infty,-R)\times\T^{N-1}$.
The function $v(x,y):=1-U(x,y)$ satisfies
\eqref{eq:weighted-forward} with
$a(y)=\alpha(y)-\gamma>0$ almost everywhere and
$g(s)=-f(1-s)$, and vanishes almost everywhere on that
half-cylinder. The same uniqueness argument gives $v=0$
almost everywhere in $\R\times\T^{N-1}$, contradicting
$U(+\infty,\cdot)=0$.
This proves the strict lower bound and completes the proof
of part~\textup{(i)}.

\medskip\noindent
{\it Step 4. Uniqueness and convergence under \eqref{cdn_alpha}.}
Assume \eqref{cdn_alpha}.
Proposition~\ref{prop:full-uniqueness} identifies all full
degenerate fronts up to longitudinal translation, with the
same speed $\gamma=\gamma^*(\alpha,f)$.
Lemmas~\ref{lem:hypo-regularity} and \ref{lem_U and gamma} give
the stated regularity, $0<U<1$, $U_x<0$, and
$\min_{\T^{N-1}}\alpha<\gamma<\max_{\T^{N-1}}\alpha$.
In particular, \eqref{eq:uniform-hypo-interior} yields
$\nabla_yU\in L^\infty(\R\times\T^{N-1})$; its global
$L^2$ integrability was proved in Step~2.
Since $U_x<0$, exactly one translate satisfies
$\mathcal N(U)=1/2$.

For any sequence $A_n\to+\infty$, Step~2 gives a subsequence
converging locally in $L^1$ to a normalized full front.  By uniqueness up to translation, this limit is
$U(\cdot+\ell,\cdot)$ for some $\ell\in\R$.
The equality
$\mathcal N(U(\cdot+\ell,\cdot))=\mathcal N(U)=1/2$
and strict decreasing property of $s\mapsto\mathcal N(U(\cdot+s,\cdot))$
force $\ell=0$.
Since $U$ is continuous, Lemma~\ref{lem5.1} gives
\begin{equation*}
	\|V_{A_{n_k}}-U\|_{L^p(\R\times\T^{N-1})}
	+\|\nabla_yV_{A_{n_k}}-\nabla_yU\|_{L^2(\R\times\T^{N-1})}
	\to0
\end{equation*}
as $k\to+\infty$, for every $1\leq p\leq+\infty$.

If this convergence failed for the whole family for some
fixed $p$, there would exist $\varepsilon>0$ and a sequence
$A_n\to+\infty$ along which the corresponding sum of norms
is at least $\varepsilon$ for every $n$.
The preceding argument would give a subsequence along which
that sum tends to zero, a contradiction.
Therefore, \eqref{eq:regular-profile-convergence} holds as
$A\to+\infty$. Taking $p=+\infty$ gives uniform convergence
of the whole normalized family on $\R\times\T^{N-1}$.
This proves part~\textup{(ii)} and completes the proof of
Theorem~\ref{thm:main}.

\section{Proof of Proposition \ref{prop:opposite-signs}}
\label{sec6}

In this section, we shall prove Proposition~\ref{prop:opposite-signs}, for which we first
establish a comparison estimate for monotone test profiles
by adapting the sliding argument of
Lemma~\ref{lem:order-speed}.
We apply this estimate at finite $A$ and then pass to the
limit $A\to+\infty$.

\begin{lemma}\label{lem:example-speed-comparison}
	Assume \eqref{eq:bistable}, and fix $A\ge1$ and
	$\alpha\in C^{1,\delta}(\T)$.
	Let $V\in C^2(\R\times\T)$ satisfy $0<V<1$ and $V_x<0$,
	with $V(-\infty,\cdot)=1$ and $V(+\infty,\cdot)=0$
	uniformly in $\T$.
	If, for some $\lambda\in\R$ and $m\ge0$,
	\begin{equation}
		\left|A^{-2}V_{xx}+V_{yy}
		+(\lambda-\alpha(y))V_x+f(V)\right|
		\le m(-V_x)
		\quad\text{in }\R\times\T,
		\label{eq:example-relative-residual}
	\end{equation}
	then $\left|c_A(\alpha,f)/A-\lambda\right|\le m$.
\end{lemma}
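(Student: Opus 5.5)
We prove the upper bound $c_A/A\le\lambda+m$; the lower bound is symmetric. Work in the rescaled variables. Write $U_A(x,y)=u_A(Ax,y)$, which satisfies \eqref{eqn_TW_after rescaling} with $\gamma_A=c_A/A$. Suppose for contradiction that $\gamma_A>\lambda+m$. The residual inequality \eqref{eq:example-relative-residual} together with $V_x<0$ gives
\[
A^{-2}V_{xx}+V_{yy}+(\gamma_A-\alpha)V_x+f(V)\le m(-V_x)+(\gamma_A-\lambda)V_x=(\gamma_A-\lambda-m)V_x<0 .
\]
So, for the operator with speed $\gamma_A$, $V$ is a strict supersolution. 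We then compare $U_A$ with translates $V(\cdot-h,\cdot)$, exactly as in the proof of Lemma~\ref{lem:order-speed}.

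\textbf{Step 1: ordering for large shifts.} Fix $\rho>0$ such that $f$ is strictly decreasing on $[0,\rho]$ and on $[1-\rho,1]$. Choose $M$ so that both $U_A$ and $V$ lie in $[0,\rho/2]$ for $x\ge M$ and in $[1-\rho/2,1]$ for $x\le-M$. For $h\ge4M$, define $\eps_h$ as the infimum of those $\eps\ge0$ with $U_A-\eps\le V(\cdot-h,\cdot)$.

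Suppose $\eps_h>0$. Then a touching point of $U_A-\eps_h$ with $V(\cdot-h,\cdot)$ exists, because the difference tends to $0$ as $x\to\pm\infty$. At that point both values lie in the same interval of monotonicity of $f$, so
\[
f(V(\cdot-h))-f(U_A)>0.
\]
Let $\mathcal L:=A^{-2}\partial_{xx}+\Delta_y+(\gamma_A-\alpha)\partial_x$ and $W:=U_A-\eps_h-V(\cdot-h,\cdot)$. Then
\[
\mathcal L W=-f(U_A)-\mathcal L V(\cdot-h)\ \ge\ f(V(\cdot-h))-f(U_A)-(\gamma_A-\lambda-m)V_x>0 .
\]
This contradicts $\mathcal LW\le0$ at the maximum point. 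Hence $\eps_h=0$ and $U_A\le V(\cdot-h,\cdot)$ for all large $h$.

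\textbf{Step 2: decreasing the shift.} Let $h_*$ be the infimum of the shifts $h$ such that the ordering holds for all $h'\ge h$. This infimum is finite: if it were $-\infty$, then letting $h\to-\infty$ would give $U_A\le 0$ on the right, which is impossible.

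At $h_*$, an interior touching point is excluded. At such a point the reaction terms coincide, and the strict supersolution term $-(\gamma_A-\lambda-m)V_x>0$ again gives $\mathcal L W>0$ at a zero maximum, a contradiction.

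For $h_n\uparrow h_*$, the positive maxima of $U_A-V(\cdot-h_n)$ escape to $x=\pm\infty$. In those tail regions $f$ is decreasing, which yields the same contradiction. Therefore $\gamma_A\le\lambda+m$.

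\textbf{Step 3: the lower bound.} Now suppose $\gamma_A<\lambda-m$. Then
\[
A^{-2}V_{xx}+V_{yy}+(\gamma_A-\alpha)V_x+f(V)\ge -m(-V_x)+(\gamma_A-\lambda)V_x=(\lambda-m-\gamma_A)(-V_x)>0,
\]
so $V$ is a strict subsolution. We repeat Steps 1 and 2 with the roles reversed, sliding $V(\cdot-h,\cdot)$ below $U_A$, i.e. $V(\cdot-h)\le U_A$ as $h\to-\infty$. This gives $\gamma_A\ge\lambda-m$.

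The main care needed is in the touching-point arguments. The supersolution term has strict sign only because $V_x<0$ everywhere. Unlike the earlier lemma, $U_A$ and $V$ are not solutions of the same equation, so we must check that the inequality is still strict when the reaction terms cancel, and that it keeps the right sign in the tail regions. Both checks follow directly from $V_x<0$ and the choice of $\rho$.
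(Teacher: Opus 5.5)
Your proposal is correct and follows essentially the paper's proof: you use $V_x<0$ to turn the residual bound into a strict super- or subsolution inequality with a term of the form $(\gamma_A-\lambda\mp m)V_x$, and then run the sliding argument of Lemma~\ref{lem:order-speed} against $U_A$. The differences are only cosmetic: the paper works directly with a positive maximum of $U_A-V_h$ rather than the $\eps_h$ device, and near the critical shift $h_*$ it uses persistence of the strict inequality on a compact cylinder instead of a sequence of maxima escaping to infinity.
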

\begin{proof}
	Let $U_A$ be the front in \eqref{eqn_TW_after rescaling},
	with $\gamma_A=c_A(\alpha,f)/A$.
	We first prove $\gamma_A\le\lambda+m$.
	Suppose otherwise, and set $V_h(x,y)=V(x-h,y)$.
	At any local maximum of $U_A-V_h$, the front equation
	and \eqref{eq:example-relative-residual} give
	\begin{equation}
		\begin{aligned}
			0
			&\ge A^{-2}(U_A-V_h)_{xx}+(U_A-V_h)_{yy}\ge f(V_h)-f(U_A)
			-(\gamma_A-\lambda-m)(V_h)_x.
		\end{aligned}
		\label{eq:example-sliding-inequality}
	\end{equation}
	
	Choose $\rho_0\in(0,1/2)$ such that $f$ is strictly
	decreasing on $[0,\rho_0]$ and $[1-\rho_0,1]$.
	By the uniform limits of $U_A$ and $V$, there exists $M>0$
	such that both profiles are at most $\rho_0$ for $x\ge M$
	and at least $1-\rho_0$ for $x\le-M$.
	For $h\ge2M$, any positive supremum of $U_A-V_h$ is
	attained, since this difference tends uniformly to zero
	at both infinities. At a positive maximum, either $x\ge M$
	and $V_h<U_A\le\rho_0$, or $x<M$ and
	$1-\rho_0\le V_h<U_A$.
	Thus, $f(V_h)>f(U_A)$, contradicting
	\eqref{eq:example-sliding-inequality}.
	This proves $U_A\le V_h$ for $h\ge2M$.
	
	Since $V_h$ increases with $h$ and tends pointwise to zero
	as $h\to-\infty$, the critical shift
	\begin{equation*}
			h_*:=\inf\{h\in\R:U_A\le V_h\text{ in }\R\times\T\}
	\end{equation*}
	is finite. Continuity gives $U_A\le V_{h_*}$.
	Equality at any point is excluded by
	\eqref{eq:example-sliding-inequality}, whose right-hand
	side would then be strictly positive.
	Therefore, $U_A<V_{h_*}$ everywhere.
	
	Enlarge $M$ so that, whenever $|h-h_*|\le1$, both $U_A$
	and $V_h$ belong to $[0,\rho_0]$ for $x\ge M$ and to
	$[1-\rho_0,1]$ for $x\le-M$.
	For $h<h_*$ sufficiently close to $h_*$, the strict
	inequality $U_A<V_h$ persists on $[-M,M]\times\T$.
	If $U_A\le V_h$ failed, $U_A-V_h$ would attain a positive
	maximum outside this cylinder, where the strict decrease
	of $f$ again contradicts
	\eqref{eq:example-sliding-inequality}.
	Thus $U_A\le V_h$ for such $h<h_*$, contrary to the
	definition of $h_*$.
	This proves $\gamma_A\le\lambda+m$.
	
	For the reverse bound, suppose by contradiction that $\lambda-m>\gamma_A$.
	At any local maximum of $V_h-U_A$, the other side of
	\eqref{eq:example-relative-residual} similarly gives
	\begin{equation*}
		0\ge f(U_A)-f(V_h)
		-(\lambda-m-\gamma_A)(U_A)_x.
	\end{equation*}
	The same maximum argument gives $V_h\le U_A$ for all
	sufficiently negative $h$.
	Since $V_h\to1$ pointwise as $h\to+\infty$, there is a
	finite largest shift for which this ordering holds.
	Repeating the preceding argument shows that the ordering
	persists for slightly larger shifts, a contradiction.
	Hence $\lambda-m\le\gamma_A\le\lambda+m$, as required.
\end{proof}

\begin{proof}[Proof of Proposition~\ref{prop:opposite-signs}]
	\medskip\noindent
	{\it Step 1. Transverse functions.} We first have
		\begin{equation}
		\sum_{j=1}^4|\alpha_s^{(j)}(y)|>0,~~~y\in\T,~s\in[-1,1].
		\label{eq:example-finite-type}
	\end{equation}
	Indeed, if $\alpha_s''(y)=\alpha_s^{(4)}(y)=0$, then
	$\cos(2\pi y)=0$ and $s=0$, so $\alpha_s'(y)\ne0$. This proves \eqref{eq:example-finite-type}.
	Fix $s\in[-1,1]$, write $\alpha=\alpha_s$, and let $\chi$
	be the mean-zero periodic solution of $\chi''=\alpha$.
	Then,
	\begin{equation*}
			\chi(y)=-\frac{\cos(2\pi y)}{4\pi^2}
		-\frac{s\cos(4\pi y)}{16\pi^2}.
	\end{equation*}
	A direct calculation further gives
	\begin{equation}
		\int_\T|\chi'|^2\,dy
		=-\int_\T\alpha\chi\,dy=\frac{4+s^2}{32\pi^2},
		\qquad
		\int_\T\alpha\chi^2\,dy=\frac{3s}{128\pi^4}.
		\label{eq:example-coefficients}
	\end{equation}
	Set
	\begin{equation}
		\Gamma:=-\frac{\displaystyle\int_\T\alpha\chi^2\,dy}
		{5\displaystyle\int_\T|\chi'|^2\,dy}
		=-\frac{3s}{20\pi^2(4+s^2)}.
		\label{eq:example-explicit-coefficients}
	\end{equation}
	Define the mean-zero periodic functions $\psi,\rho,\sigma$ by
	\begin{equation}\label{cell eqn}
		\psi''=\alpha\chi+\frac{4+s^2}{32\pi^2},\qquad
		\rho''=\alpha\psi-\frac{3s}{128\pi^4},\qquad
		\sigma''=-\chi,
	\end{equation}
	where the right-hand sides all have zero mean thanks to
	\eqref{eq:example-coefficients}, the zero mean of $\chi$, as well as
	\begin{equation*}
		\int_\T\alpha\psi\,dy
		=\int_\T\chi\psi''\,dy
		=\int_\T\alpha\chi^2\,dy=\frac{3s}{128\pi^4}.
	\end{equation*}
	The functions $\chi,\psi,\rho,\sigma$ are trigonometric
	polynomials with coefficients polynomial in $s$, so their
	derivatives of each fixed order are uniformly bounded.
	All constants below are independent of $s\in[-1,1]$,
	$A\ge1$, and sufficiently small $\eps>0$.
	
	\medskip\noindent
	{\it Step 2. Construction of a comparison profile.}
	Write $h(u)=u(1-u)$ and $g(u)=-h(u)h'(u)$, so that
	$f_\eps=\eps g+\eps^2h$, and set
	\begin{equation*}
		q_\eps(u)=\frac{4\pi\sqrt{2\eps}}{\sqrt{4+s^2}}h(u)
		-\frac{72s\eps}{5(4+s^2)^2}h(u)h'(u).
	\end{equation*}
	For $\eps_0$ small enough,
	$c\sqrt\eps\,h\le q_\eps\le C\sqrt\eps\,h$ on $[0,1]$.
	By separation of variables, the solution of
	\begin{equation*}
		\Phi'=-q_\eps(\Phi)~~~~\text{with}~~ \Phi(0)=\frac12,
	\end{equation*}
	is defined on $\R$, takes values in $(0,1)$, and decreases
	from $1$ to $0$. Differentiating $\Phi'=-q_\eps(\Phi)$ repeatedly and
	using the explicit form of $q_\eps$, we obtain by induction
	\begin{equation}
		|\Phi^{(j)}|\le C_j\eps^{j/2}h(\Phi)~~~\text{for}~~1\le j\le5,
		\qquad
		-\Phi'\ge c\sqrt\eps\,h(\Phi).
		\label{eq:example-weighted-derivatives}
	\end{equation}
	Using the explicit formula for $q_\eps$, we compute
	$\Phi''$ and $\Phi'''$ from $\Phi'=-q_\eps(\Phi)$
	and get
	\begin{equation}
		\left|
		\frac{4+s^2}{32\pi^2}\Phi''+\eps g(\Phi)
		-\frac{3s}{128\pi^4}\Phi'''+\eps\Gamma\Phi'
		\right|
		\le C\eps^2h(\Phi)
		\quad\text{on }\R.
		\label{eq:example-scalar-balance}
	\end{equation}
	
	For $(x,y)\in\R\times\T$, define 
	\begin{equation}
		V=\Phi+\chi\Phi'+\psi\Phi''+\rho\Phi'''
		+\eps\sigma g'(\Phi)\Phi'.
		\label{eq:example-test-function}
	\end{equation}
	By \eqref{eq:example-weighted-derivatives}, for all
	sufficiently small $\eps>0$,
	\begin{equation}
		|V-\Phi|\le C\sqrt\eps\,h(\Phi),
		\qquad
		|V_x-\Phi'|+|V_{xx}|\le C\eps h(\Phi), \qquad\text{on }\R\times\T,
		\label{eq:example-test-estimates}
	\end{equation}
	Since $h(\Phi)\le\min\{\Phi,1-\Phi\}$, it follows that, up to decreasing $\eps_0$
	if necessary,
	\begin{equation}
		0<V<1,\qquad -V_x\ge c\sqrt\eps\,h(\Phi)>0, \qquad\text{on }\R\times\T,
		\label{eq:example-admissibility}
	\end{equation}
	and $V(-\infty,\cdot)=1$, $V(+\infty,\cdot)=0$
	uniformly in $\T$.
		
	Set $R_\eps:=V_{yy}+(\eps\Gamma-\alpha)V_x+f_\eps(V)$ on $\R\times\T$. Using $\chi''=\alpha$ and \eqref{cell eqn}, we get
	\begin{align*}
		V_{yy}
		={}&\alpha(\Phi'+\chi\Phi''+\psi\Phi''')
		+\frac{4+s^2}{32\pi^2}\Phi''-\frac{3s}{128\pi^4}\Phi'''
		-\eps\chi g'(\Phi)\Phi',
	\end{align*}
	Moreover, since $f_\eps=\eps g+\eps^2h$, Taylor's formula and
	\eqref{eq:example-weighted-derivatives} yield as $\eps\to0^+$,
	\begin{equation*}
			g(V)=g(\Phi)+\chi g'(\Phi)\Phi'+O(\eps h(\Phi)),
		\qquad h(V)=O(h(\Phi)),
	\end{equation*}
	uniformly in $(x,y)\in \R\times\T$.
	Combining these identities with the derivative of
	\eqref{eq:example-test-function} gives that
	\begin{equation*}
		R_\eps=\frac{4+s^2}{32\pi^2}\Phi''+\eps g(\Phi)
		-\frac{3s}{128\pi^4}\Phi'''+\eps\Gamma\Phi'
		+O(\eps^2h(\Phi))~~~~\text{as}~~\eps\to0^+,
	\end{equation*}
	uniformly in $(x,y)\in \R\times\T$.
	Hence, \eqref{eq:example-scalar-balance} implies that,
for all sufficiently small $\eps>0$,
	\begin{equation}
		|R_\eps(x,y)|\le C\eps^2h(\Phi(x)),
		\qquad (x,y)\in\R\times\T.
		\label{eq:example-full-residual}
	\end{equation}
	
	\medskip\noindent
	{\it Step 3. Speed estimates.}
	By \eqref{eq:example-test-estimates},
	\eqref{eq:example-admissibility}, and
	\eqref{eq:example-full-residual}, it follows that for every $A>0$
	and all sufficiently small $\eps>0$,
	\begin{equation*}
			|A^{-2}V_{xx}+R_\eps|
		\le C(A^{-2}\eps+\eps^2)h(\Phi)
		\le C(A^{-2}\sqrt\eps+\eps^{3/2})(-V_x),  \qquad\text{on }~\R\times\T.
	\end{equation*}
	Lemma~\ref{lem:example-speed-comparison} therefore yields
	\begin{equation*}
		\left|\frac{c_A(\alpha_s,f_\eps)}{A}-\eps\Gamma\right|
		\le C\big(\eps^{3/2}+A^{-2}\sqrt\eps\big),
		\qquad A\ge1.
	\end{equation*}
	Letting $A\to+\infty$ with $\eps$ fixed, and using
	\eqref{eq:speed-limit} and
	\eqref{eq:example-explicit-coefficients}, proves
	\eqref{eq:example-speed-expansion}.
	In particular,
	\begin{align*}
		\gamma^*(\alpha_1,f_\eps)
		=-\frac{3\eps}{100\pi^2}+O(\eps^{3/2}),~~~~~
		\gamma^*(\alpha_{-1},f_\eps)
		=\frac{3\eps}{100\pi^2}+O(\eps^{3/2}).
	\end{align*}
	Choosing $\eps_0$ so that $C\sqrt{\eps_0}<3/(200\pi^2)$
	gives the asserted signs.
	Finally, \eqref{eq:speed-Lipschitz} gives
	\begin{equation*}
			|\gamma^*(\alpha_s,f_\eps)-\gamma^*(\alpha_t,f_\eps)|
		\le\|\alpha_s-\alpha_t\|_{L^\infty(\T)}=|s-t|.
	\end{equation*}
	The intermediate value theorem yields
	$s_\eps\in(-1,1)$ with
	$\gamma^*(\alpha_{s_\eps},f_\eps)=0$. This completes the proof.
\end{proof}

\bigskip 
\noindent
{\bf Declaration on the Use of AI.} The human authors initiated and led the research program, formulated the mathematical problem, developed the main results, and independently verified all arguments. For the example constructed in Proposition 1.5, the human authors provided the main mathematical idea and strategy, while discussions with ChatGPT assisted in exploring and refining suitable ansatzes for the comparison profiles and the associated correctors. ChatGPT also assisted in improving notation, grammar, and presentation. The final manuscript was carefully revised and approved by the human authors, who take full responsibility for its content.

\bigskip 
\noindent
{\bf Conflict of interest statement.} On behalf of all authors, the corresponding author states that there is no conflict of interest.

\bigskip 
\noindent
{\bf Data availability statement.} No data were generated or analyzed in this study.

\end{document}